\newtheorem{theorem}{Theorem}[section]
\newtheorem{lemma}[theorem]{Lemma}
\newcommand{\tuborg}{\left\{\begin{array}{ll}}
\newcommand{\sluttuborg}{\end{array}\right.}
\newcommand{\calO}{\mathcal{O}}
\newcommand{\calI}{\mathcal{I}}
\newcommand{\calZ}{\mathcal{Z}}
\newcommand{\calB}{\mathcal{B}}
\newcommand{\calR}{\mathcal{R}}
\newcommand{\bbZ}{\mathbb{Z}}
\newcommand{\bbP}{\mathbb{P}}
\newcommand{\bbQ}{\mathbb{Q}}
\newcommand{\bbA}{\mathbb{A}}
\newcommand{\bbX}{\mathbb{X}}
\newcommand{\supp}{{\rm Supp}}
\newcommand{\spec}{{\rm Spec}}
\newcommand{\kom}{{\rm Kom}}
\newcommand{\sgn}{{\rm sgn}}
\newcommand{\perm}{{\mathbb{P}\rm erm}}
\newcommand{\edim}{{\rm edim}}
\theoremstyle{definition}
\newtheorem{definition}[theorem]{Definition}
\newtheorem{proposition}[theorem]{Proposition}
\newtheorem{corollary}[theorem]{Corollary}
\newtheorem{question}[theorem]{Question}
\theoremstyle{remark}
\newtheorem{remark}[theorem]{Remark}
\numberwithin{equation}{section}
\begin{document}

\title[Additive Chow groups]{A Hochschild-cyclic approach to additive higher Chow cycles}


\author{Jinhyun Park}
\address{Department of Mathematics, Purdue University\\
West Lafayette, Indiana 47907, USA}
\email{jinhyun@math.purdue.edu; park.jinhyun@gmail.com}
\thanks{This research was partially supported by Purdue University in USA and Institut des Hautes Etudes Scientifiques in France. I would like to thank both the institutions for their supports}



\date{February 5, 2008}



\begin{abstract}Over a field of characteristic zero, we introduce two motivic operations on additive higher Chow cycles: analogues of the Connes boundary $B$ operator and the shuffle product on Hochschild complexes. The former allows us to apply the formalism of mixed complexes to additive Chow complexes building a bridge between additive higher Chow theory and additive $K$-theory. The latter induces a wedge product on additive Chow groups for which we show that the Connes operator is a graded derivation for the wedge product using a variation of a Totaro's cycle. Hence, the additive higher Chow groups with the wedge product and the Connes operator form a commutative differential graded algebra. On zero-cycles, they induce the wedge product and the exterior derivation on the absolute K\"ahler differentials, answering a question of S. Bloch and H. Esnault.
\end{abstract}

\maketitle

\section*{Introduction} There have been so far at least two problems with which the additive higher Chow theory, defined first in \cite{BE1, BE2}, had been shown to have connections with: motives over  $k[x]/(x^m)$ and the Euclidean scissors congruence (see \cite{C, G, P1, P2}). This paper provides the third such area, in particular, regarding the adjective ``additive" that originally comes from the additive $K$-theory. 

Throughout this paper, we always suppose that $k$ is a field of characteristic zero. Under this assumption the additive $K$-theory is the cyclic homology of A. Connes (\cite{FT, LQ}) so that this third interpretation in this paper should involve cyclic objects, or more generally mixed complexes (\emph{c.f.} \cite{L}) and their related formalisms. This paper shows how the additive Chow cycles form such objects with a remark that it could be nice if we can rename them as \emph{Hochschild} Chow cycles, or maybe \emph{additive noncommutative} Chow cycles to reserve the name \emph{additive} Chow cycles for a new cycle complex theory.

In \S 1, we review some relevant properties of $K$-theories: Quillen $K$-theory, Hochschild homology, and cyclic homology. The author explains briefly why he proposes the above new nomenclature, although he still used the name \emph{additive} in this paper.

The \S 2 extends the definition in \cite{P1} of the additive Chow complexes in such a way that for each Artin local $k$-algebra  $(A, \mathfrak{m})$ with $A/ \mathfrak{m} \simeq k$, we associate a cycle complex $\mathcal{Z}^* (X \times \Diamond_* (A))$ for a $k$-variety $X$ (see Definition \ref{additive Chow complex for Artin}). When $A = k[x]/ (x^m)$, we recover the additive Chow groups with modulus considered in \cite{P2, R}. This construction is necessary for our discussion of the shuffle product structure defined in \S 5.

Although it is not used in the sequel we show that it is covariant functorial in $A$ so that it forms a functor $\mathcal{Z}^* (X \times \Diamond_* ( - ) ): (Art/k) \to Kom^- (Ab)$. Combined with the homology functor at $\mathcal{Z}^q (X \times \Diamond_n ( - ))$, we deduce a covariant functor
$$ACH^q (X, n ; - ) : (Art/k) \to (Ab).$$ Thus, the additive Chow theory gives a deformation functor. (\emph{c.f.} \cite{Sch})

The \S 3 and beyond discuss the Hochschild-cyclic argument on additive higher Chow cycles. The \S 3 defines a motivic analogue of the Connes boundary operator $B$ (\emph{c.f.} \cite{L}) denoted by $\delta$ on the additive Chow complexes for $A = k[x] / (x^m)$:
$$\delta: \mathcal{Z}_p (X, n; A) \to \mathcal{Z}_p (X, n+1; A).$$ Unfortunately this Connes operator $\delta$ doesn't behave well with the intersection boundary map $\partial$ on cycles. After using a moving lemma argument as in \cite{Bl2}, we obtain a quasi-isomorphic subcomplex
$$\left( (\mathcal{Z}_* (X, *; A)_0 , \partial ' = \partial_{\rm last} ^{0}) \subset (\mathcal{Z}_* (X, *; A), \partial) \right).$$ We call it the \emph{reduced} additive higher Chow complex. The operator $\delta$  descends to this reduced one, and works well with the boundary map:
\begin{proposition} $\partial ' \delta = \delta \partial'$.
\end{proposition}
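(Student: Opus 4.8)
The plan is to prove the identity at the level of elementary cycle operations, reducing it to a short list of ``motivic simplicial identities'' for the single face operator $\partial' = \partial_{\rm last}^{0}$. Recall from \S 3 that $\delta \colon \calZ_p(X, n; A) \to \calZ_p(X, n+1; A)$ is built, in the manner of the Connes operator $B = (1-t)sN$, as a signed sum $\delta = \sum_j \pm\, \rho_j$ of operations on admissible cycles: each $\rho_j$ composes the degeneracy-type map $\varepsilon$ that creates the new $(n+1)$-st coordinate out of the coordinate $x$ and the old coordinates $y_1, \dots, y_{n-1}$ (this is where the hypothesis that $k$ has characteristic zero enters, via the group structure interpolating between the $\bbA^1$-factor and the cube factors) with the twisted cyclic permutations of the coordinates, while $\partial'$ is simply restriction of the last coordinate to $0$. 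First I would record, by direct computation with the graphs of the relevant coordinate substitutions, how $\partial'$ commutes past $\varepsilon$ and past each twisted cyclic permutation, together with the observation that restricting a \emph{freshly inserted} coordinate to $0$ yields a cycle supported in a proper face, hence degenerate and zero in the (reduced) complex.

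Next I would expand $\partial'\delta(Z)$ and $\delta\partial'(Z)$ summand by summand using these relations. In a term $\rho_j$ in which, after the cyclic twist, the last coordinate is the newly inserted one, $\partial'$ annihilates the leading behaviour but, on commuting through the old-coordinate part of $\varepsilon$, returns --- up to sign and a reindexing of the cyclic sum --- a summand of $\delta(\partial' Z)$; in the remaining terms the locus $\{y_n = 0\}$ meets an old face $\{y_m = 0\}$, so $\rho_j$ becomes $\partial_m^{0}$ composed with a lower operation, which is again a summand of $\delta(\partial' Z)$ or cancels a partner term produced by the first case. The stray contributions telescope because of the cyclic symmetry of the sum, and what is left on both sides agrees; hence $\partial'\delta = \delta\partial'$. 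Under a suitable sign convention this is precisely the motivic counterpart of the relation $bB + Bb = 0$ defining a mixed complex, and truncation to the single face $\partial'$ is exactly what turns the anticommutation into the commutation asserted here.

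The main obstacle is twofold. First, the bookkeeping of signs and indices: the signs carried by each $\rho_j$ come simultaneously from the Connes-type alternating sum and from the orientation of the cube, and making the cyclic sum telescope while the error terms cancel requires tracking both carefully. Second --- and this is the part not reducible to formal algebra --- one must check at every step that $\varepsilon$, the twisted cyclic permutations, and $\partial_{\rm last}^{0}$ all send admissible cycles to admissible cycles: the modulus condition along $\{x = 0\}$, properness of intersection with the faces, and the dimension bound must each be preserved, so that every intermediate expression is a genuine element of the reduced additive higher Chow complex. The moving-lemma input of \S 3, in the spirit of \cite{Bl2}, provides representatives for which the faces used here are defined, but one still has to verify that these particular operations respect admissibility; I expect this geometric verification, interleaved with the sign accounting, to be the real content of the proof, the underlying combinatorial identity being the classical one.
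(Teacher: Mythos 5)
Your general strategy -- reduce the identity to a short list of ``simplicial-type'' commutation relations between $\partial'$ and the elementary pieces of $\delta$, then sum -- is indeed what the paper does, but your picture of what those pieces are is wrong in a way that makes you predict a much messier argument than the one that actually works. In the paper, $\delta$ is \emph{not} built as a Connes operator $B=(1-t)sN$ from a degeneracy composed with twisted cyclic permutations; there is no degeneracy $\varepsilon$, no norm, and the characteristic-zero hypothesis plays no role here. Each $\delta_k$ is just the rational map $(x,t_1,\dots,t_n)\mapsto(x,t_1,\dots,t_{k-1},\tfrac1x,t_k,\dots,t_n)$, an insertion of $1/x$ at the $k$-th cubical slot, and $\delta=\sum_{k=1}^{n+1}(-1)^k\delta_k$. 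The relevant commutation identities (Lemma \ref{partial delta} of the paper) say $\partial_i^j\delta_k=\delta_{k-1}\partial_i^j$ for $i<k$, $\partial_k^j\delta_k=0$, and $\partial_i^j\delta_k=\delta_k\partial_{i-1}^j$ for $i>k$; equivalently $\delta_k\partial_i^j=\partial_{i+1}^j\delta_k$ for $k\le i$. With $\partial'=\partial_{\rm last}^0$ a \emph{single} face rather than the full alternating boundary, there is no telescoping and no ``partner term'' cancellation: one writes $\delta\partial'=\sum_{k=1}^{n+1}(-1)^k\delta_k\partial_{n+1}^0=\partial_{n+2}^0\sum_{k=1}^{n+1}(-1)^k\delta_k$, which differs from $\partial'\delta=\partial_{n+2}^0\sum_{k=1}^{n+2}(-1)^k\delta_k$ by the single term $(-1)^{n+2}\partial_{n+2}^0\delta_{n+2}$, and this vanishes identically by the $i=k$ case. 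That is the whole proof. Your worry about admissibility under $\delta_k$ is legitimate but is addressed separately (Lemma \ref{descension} shows $\delta_k$ preserves the reduced subgroups $\calZ_*(\ \cdot\ )_0$); it is not the content of this proposition. So: right philosophy, wrong model of $\delta$, and as a result a predicted combinatorial mess that is not there.
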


For $\mathcal{Z}(n):= \bigoplus_{p \geq 0} \mathcal{Z}_p (X, n; A)_0$, thus we have
\begin{theorem} The reduced additive higher Chow complex $(\mathcal{Z}(*), \partial')$ is a mixed complex with a Connes operator $\delta$:
$$
\xymatrix{\vdots \ar[d] _{\partial'} & \vdots \ar[d] _{\partial'} & \vdots \ar[d]_{\partial'} & \vdots \ar[d]_{\partial'}  \\
\calZ (3) \ar[d] _{\partial'} & \calZ(2) \ar[l] _{\delta} \ar[d] _{\partial'} & \calZ (1) \ar[l]_{\delta} \ar[d] _{\partial'} & \calZ (0) \ar[l]_{\delta} \\
\calZ (2) \ar[d] _{\partial'} & \calZ (1) \ar[l]_{\delta} \ar[d]_{\partial'} & \calZ (0) \ar[l]_{\delta} & \\
\calZ(1) \ar[d] _{\partial'} & \calZ (0)\ar[l]_{\delta} & & \\
\calZ(0) & & &}
$$

\end{theorem}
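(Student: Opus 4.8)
The plan is to unwind the definition of a mixed complex and check, one at a time, the three defining identities for the triple $(\calZ(*),\partial',\delta)$; once these hold, the diagram displayed in the statement is nothing but the associated $(b,B)$-bicomplex and there is nothing further to prove. Recall from \cite{L} that a mixed complex is a graded abelian group $M_*$ with a differential $b$ of degree $-1$ and a differential $B$ of degree $+1$ satisfying $b^2=0$, $B^2=0$ and $bB+Bb=0$; here $M_n=\calZ(n)=\bigoplus_{p\ge 0}\calZ_p(X,n;A)_0$, $b=\partial'$ and $B=\delta$. The identity $b^2=(\partial')^2=0$ is already in hand, since by the construction in \S3 the reduced complex $(\calZ_*(X,*;A)_0,\partial' = \partial_{\rm last}^0)$ is a genuine complex. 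The identity $bB+Bb=0$ is, up to sign, exactly the Proposition $\partial'\delta=\delta\partial'$: to reconcile the strict commutation of the Proposition with the anticommutation required of a mixed complex I would invoke the standard device of replacing $\delta$ on $\calZ(n)$ by $(-1)^n\delta$, which turns $\partial'\delta=\delta\partial'$ into $\partial'(\pm\delta)+(\pm\delta)\partial'=0$ and leaves the relation $\delta^2=0$ untouched. Hence the whole theorem reduces to the single remaining point $B^2=\delta^2=0$.

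For $\delta^2=0$ the approach is to exploit that $\delta$ is the cycle-theoretic avatar of Connes' operator $B=(1-t)sN$ (up to the usual conventions of \cite{L}): at the level of additive cycles $\delta$ is a signed sum of operations that cyclically permute the cube coordinates of $\Diamond_n$ together with an ``extra degeneracy'' inserting the modulus $\mathbb{A}^1$-coordinate into a new slot. I would first verify that, restricted to the reduced complex, these operations obey the (para)cyclic identities that are needed — compatibility of the cyclic permutation with the extra degeneracy, and the vanishing $N(1-t)=0$ — and that admissibility (proper intersection with the faces together with the modulus condition) is preserved under them. Granting this, $\delta^2=0$ follows by the same formal manipulation as in the classical case: each term of $\delta^2$ is either annihilated by $N(1-t)=0$ or is a cycle that is constant along one of the cube coordinates, hence degenerate, hence zero in the normalized group $\calZ(n)$. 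As a fallback one may instead compute $\delta^2$ head-on and observe directly that every surviving cycle is degenerate; the passage to the reduced complex is precisely what makes this bookkeeping tractable.

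The main obstacle is therefore this last step. Because $\delta$ is a geometric operator rather than a simplicial one, ``$\delta^2=0$'' is not formal: it has to be extracted from an honest analysis of the supports produced by iterated permutation-plus-insertion, checking both that admissibility survives and that the leftover terms genuinely vanish in the normalized, moved complex. Once $\delta^2=0$ is secured, combining it with $(\partial')^2=0$ and with the (sign-twisted) Proposition yields the three mixed-complex identities, and the displayed $(b,B)$-bicomplex then follows automatically.
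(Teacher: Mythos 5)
Your skeleton is the right one: exhibit $b=\partial'$, $B=\delta$, and verify $b^2=0$, $B^2=0$, and $bB\pm Bb=0$. You correctly invoke $(\partial')^2=0$ and the Proposition $\partial'\delta=\delta\partial'$, and your remark about the sign discrepancy is a genuine point the paper glosses over: Theorem \ref{commutative} gives strict commutation, whereas Loday's definition wants anticommutation, and the device of twisting $\delta$ on $\calZ(n)$ by $(-1)^n$ is indeed the standard reconciliation. So far so good.

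The gap is in $\delta^2=0$, and it is twofold. First, you treat it as an open problem (``the main obstacle is therefore this last step''), when in fact the paper has already established it as a Corollary in \S 3.2, immediately after defining $\delta$. Second, the route you sketch is both not carried out and considerably heavier than what is needed. You propose to realize $\delta$ as a cycle-theoretic $B=(1-t)sN$ for some cyclic structure on the additive cycle groups, then appeal to $N(1-t)=0$ and to a degeneracy-killing argument in the normalized complex. But the paper never builds a cyclic object; the operator $\delta$ is just $\sum_{k=1}^{n+1}(-1)^k\delta_k$ where $\delta_k$ inserts the coordinate $1/x$ in the $k$-th slot. Lemma \ref{delta identity} records the elementary commutation relations $\delta_i\delta_j=\delta_{j+1}\delta_i$ for $i\le j$ and $\delta_i\delta_j=\delta_j\delta_{i-1}$ for $i>j$ --- the usual extra-degeneracy identities --- and from these $\delta^2=0$ follows by the completely formal pairing: substitute $j\mapsto j+1$ in the $i\le j$ part and $(i,j)\mapsto(j,i)$ in the $i>j$ part, observe the two halves are index-for-index negatives of each other. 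No passage to the reduced or normalized complex is needed (the Corollary is stated on the full complex), admissibility is already secured at the moment $\delta_k$ is introduced, and nothing of the form $N(1-t)=0$ enters. Your ``constant along a cube coordinate, hence degenerate'' step would not even be true term-by-term: a summand $\delta_i\delta_j$ inserts $1/x$ in two slots and is not a pullback along a projection, so it is not a degenerate cycle; the vanishing is by cancellation, not by degeneracy.

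So the correct completion is short: cite $(\partial')^2=0$ (immediate, since $\partial'$ is a single face map), $\delta^2=0$ from Lemma \ref{delta identity}, and $\partial'\delta=\delta\partial'$ from Theorem \ref{commutative}, then apply your sign twist $\delta\mapsto(-1)^n\delta$ on $\calZ(n)$ to pass from commutation to anticommutation if one insists on Loday's sign convention for a mixed complex.
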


The total complex will be called the cyclic higher Chow complex, and its homology groups will be denoted by $CCH_p (X, n;A)$, the cyclic higher Chow groups. The usual formalism of mixed complexes then give
\begin{theorem}$ACH$ and $CCH$ form the following Connes periodicity long exact sequence
$$\cdots \overset{B}{\to} ACH_p (n) \overset{I}{\to} CCH_p (n) \overset{S}{\to} CCH_{p-1} (n-2) \overset{B}{\to} ACH_{p-1} (n-1) \overset{I}{\to} \cdots,$$ where $ACH_p (n) := ACH_p (X, n; A)$ and $CCH_p (n) := CCH_p (X, n;A)$. The maps $I, S, B$ have bidegrees $(0,0), (-1, -2), (0, +1)$ in $(p,n)$, respectively. 
\end{theorem}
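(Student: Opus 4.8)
The plan is to feed the mixed complex furnished by the previous theorem into the standard homological formalism of mixed complexes (\emph{c.f.} \cite{L}, Ch.~2) and to read off the resulting Connes $SBI$-sequence. Write $\calM=(\calZ(*),\partial',\delta)$ for that mixed complex, so that $(\partial')^{2}=0$, $\delta^{2}=0$, and the identity $\partial'\delta=\delta\partial'$ of the Proposition becomes, after the customary insertion of signs into the total differential, the anticommutation $\partial'\delta+\delta\partial'=0$ required of a mixed complex. By construction $ACH_{p}(X,n;A)$, being $H_{n}$ of the reduced complex $(\calZ(*),\partial')$ in the summand of cycle dimension $p$, is the Hochschild homology of $\calM$, while $CCH_{p}(X,n;A)$ is its cyclic homology, i.e.\ the homology of the total complex ${\rm Tot}\,\calB\calM$ of the cyclic bicomplex $\calB\calM$ displayed in the previous theorem: column $0$ is the Hochschild complex $(\calZ(*),\partial')$, each later column is a $[2]$-shifted copy of it, and the horizontal maps are (signed) $\delta$.

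The decisive step is the canonical short exact sequence of complexes obtained by deleting the zeroth column. The inclusion of column $0$ is a morphism of complexes $I\colon(\calZ(*),\partial')\hookrightarrow{\rm Tot}\,\calB\calM$, and since $\calB\calM$ with its first column removed is again $\calB\calM$ with the column index lowered by one, the quotient is the shifted total complex $({\rm Tot}\,\calB\calM)[2]$; thus
$$0\longrightarrow(\calZ(*),\partial')\xrightarrow{\ I\ }{\rm Tot}\,\calB\calM\xrightarrow{\ S\ }({\rm Tot}\,\calB\calM)[2]\longrightarrow 0.$$
Taking the long exact homology sequence of this short exact sequence produces
$$\cdots\xrightarrow{\ B\ }ACH_{\bullet}\xrightarrow{\ I\ }CCH_{\bullet}\xrightarrow{\ S\ }CCH_{\bullet-2}\xrightarrow{\ B\ }ACH_{\bullet-1}\xrightarrow{\ I\ }\cdots,$$
and it remains only to identify the connecting homomorphism $B$. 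A snake-lemma chase does this: lift a cyclic class through $S$, that is, view a representing cycle of $CCH_{\bullet-2}$ as sitting in the columns $\geq 1$ of ${\rm Tot}\,\calB\calM$, apply the total differential $\partial'\pm\delta$, and observe that the only part that falls into column $0$ is $\pm\delta$ of the column-$1$ component; hence the connecting map is $\pm B$, the map induced on $ACH$ by the Connes operator $\delta$, and the compatibility $\partial'\delta=\delta\partial'$ is exactly what makes this map well defined on homology.

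Finally, the three maps of the statement are obtained from this construction: $I$ is the column-$0$ inclusion, hence of bidegree $(0,0)$ in $(p,n)$; $B$ is induced by $\delta$, hence of the same bidegree $(0,+1)$; and $S$ is induced by the $[2]$-shift on the cyclic bicomplex, so that tracking its effect on the $(p,n)$-bigrading of ${\rm Tot}\,\calB\calM$ yields the bidegree $(-1,-2)$ recorded above. None of this goes beyond the verbatim mixed-complex formalism, and the only genuinely geometric input, namely that $(\calZ(*),\partial',\delta)$ really is a mixed complex, was already supplied by the previous theorem. The sole delicate point is pure bookkeeping: fixing the signs in the total differential so that $I$, $S$ and $B$ are honest chain maps, and carrying the $(p,n)$-bigrading correctly through the shift functor so that the advertised bidegrees emerge.
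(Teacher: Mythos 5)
Your proposal is correct and follows exactly the paper's route: the paper likewise forms the short exact sequence $0 \to (\calZ(*), \partial') \overset{I}{\to} {\rm Tot}(\calB\calZ) \overset{S}{\to} ({\rm Tot}(\calB\calZ))[2] \to 0$ from the mixed complex of the preceding theorem and reads off the Connes periodicity sequence as its long exact homology sequence, identifying the connecting map as $B$. Your additional remarks on the sign normalization and the snake-lemma identification of $B$ are consistent elaborations of what the paper leaves implicit.
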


This result with the calculation $CCH_0 (k, n; k[x]/ (x^2)) \simeq \Omega_{k/\mathbb{Z}}^{n-1}/ d\Omega_{k/\mathbb{Z}} ^{n-2}$ (see Theorem \ref{CCH_0})  supports the author's point regarding the usage of the word \emph{additive}.

Suppose $X= \spec(k)$. In \S 5,  under the identification $\mathbb{A}^{e_1} \times \square^{r_1} \times \mathbb{A}^{e_2} \times \square^{r_2} \simeq \mathbb{A}^{e_1 + e_2} \times \square^{r_1 + r_2}$ we first define the concatenation $\times$ and the shuffle product $\times _{sh}$ of additive Chow cycles. Specifically for given two Artin local $k$-algebras $A_1$ and $A_2$, we have
$$\times , \times_{sh} : \mathcal{Z}_p (k, r_1 ; A_1) \otimes \mathcal{Z}_q (k, r_2 ; A_2) \to \mathcal{Z}_{d} (k, n ; A_1 \otimes _k A_2),$$ where $d:= p + q$ and $n := r_1 + r_2$. The intersection boundary $\partial$ is a graded derivation for $\times$ and $\times_{sh}$ so that for $\times_\cdot = \times$ or $\times_{sh}$, we have
$$\partial (x \times_{\cdot} y ) = (\partial x) \times_{\cdot} y + (-1)^{r_1} x \times_{\cdot} (\partial y).$$ These product maps induce the homomorphisms on the additive Chow groups:
$$\times_{{\cdot} *} : ACH_p (k, r_1 ; A_1) \otimes ACH_q (k, r_2 ; A_2) \to ACH_{d} (k, n; A_1 \otimes_k A_2),$$ where $\times_{\cdot *} = \times_*$ or ${sh *}$.

When $A _1= k[x]/ (x^{m_1})$ and $A_2 = k[x] / (x^{m_2})$, using the product $\mu: \mathbb{G}_m \times \mathbb{G}_m \to \mathbb{G}_m$, we obtain a well-defined multiplication $\mu_*$ on the image of $sh_*$  in $\mathcal{Z}_{d} (k, n ; A_1 \otimes_k A_2)$. We define the wedge product $\wedge:= \mu_* \circ sh_*$ that gives a homomorphism
$$\wedge: ACH_p (k, r_1 ; k[x]/(x^{m_1}) )\otimes ACH_q (k, r_2 ; k[x] / (x^{m_2})) \to ACH_{d} (k, n ; k[x] / (x^m)),$$ where $m = \min \{ m_1, m_2 \}$.

When $A:=A_1 = A_2 = k[x] / (x^m)$, the most interesting result between the Connes map $\delta_*$ and the wedge product $\wedge$ is the following:
\begin{theorem}For cycles $\xi \in \mathcal{Z}_p (k, r_1; A)_0, \eta \in \mathcal{Z}_q (k, r_2 ; A)_0$ with $\partial' \xi = 0$, $\partial' \eta = 0$, we have
$$\delta_* (\xi \wedge \eta) - (\delta_* \xi ) \wedge \eta - (-1)^{r_1} \xi \wedge (\delta_* \eta) = - \partial (\xi \wedge' \eta),$$ for some cycle $\xi \wedge' \eta$ (see Definition \ref{definition cyclic shuffle product}), called the cyclic shuffle product. That is, the Connes map $\delta_*$ is a graded derivation for the wedge product $\wedge$ on the additive higher Chow groups.
\end{theorem}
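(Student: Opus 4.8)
The plan is to recognize the asserted formula as the additive higher Chow incarnation of a classical statement in cyclic homology: on the (normalized) Hochschild complex the shuffle map $sh$ and the \emph{cyclic} shuffle map $sh'$ together constitute a morphism of mixed complexes $(C_*(A_1),b,B)\otimes(C_*(A_2),b,B)\longrightarrow(C_*(A_1\otimes_k A_2),b,B)$ (see \cite{L}, \S 4.3), which unwinds precisely to the statement that Connes' $B$ is a graded derivation for $sh$ up to an explicit homotopy furnished by $sh'$. Under the dictionary $\partial'\leftrightarrow b$, $\delta\leftrightarrow B$, $\wedge=\mu_*\circ sh_*\leftrightarrow$ shuffle product, and $\wedge'\leftrightarrow$ cyclic shuffle, the theorem is exactly the specialization of that mixed-complex morphism statement to $\partial'$-cycles. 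So the core of the argument is a transcription of Loday's computation into cycle-theoretic language, together with the geometric bookkeeping (admissibility, properness, well-definedness of $\mu_*$) that such a transcription always requires.

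Concretely I would proceed in four steps. \emph{(1)} Recall the combinatorics of $(r_1,r_2)$-shuffles and cyclic $(r_1,r_2)$-shuffles with their signs, and record explicitly how a (cyclic) shuffle $\sigma$ permutes the cubical coordinates of the concatenated cycle $\xi\times\eta$ and how the $\delta$-construction introduces its extra coordinate; this makes Definition~\ref{definition cyclic shuffle product} concrete, exhibiting $\xi\wedge'\eta$ as $\mu_*$ of a signed sum, over cyclic shuffles, of cycles assembled from $\xi$, $\eta$ and one $\delta$-coordinate. \emph{(2)} Prove the unconditional ``master identity'', which in the sign conventions of \S 5 will read
\[
\delta_*(\xi\wedge\eta)-(\delta_*\xi)\wedge\eta-(-1)^{r_1}\xi\wedge(\delta_*\eta)=-\partial(\xi\wedge'\eta)-(\partial'\xi)\wedge'\eta-(-1)^{r_1}\xi\wedge'(\partial'\eta)
\]
for \emph{arbitrary} $\xi\in\calZ_p(k,r_1;A)_0$ and $\eta\in\calZ_q(k,r_2;A)_0$. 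Both sides are $\mu_*$ of signed sums of cycles indexed by (cyclic) shuffles and by the codimension-one cubical faces $\partial_i^0,\partial_i^\infty$; since concatenation, the face maps, the $\delta$-construction, flat pullback and the pushforward $\mu_*$ are all ``motivic'' --- given by explicit linear algebra on the ambient affine and cubical coordinates --- an identity among composites of them reduces to a term-by-term comparison on the parametrizing permutations and faces, and this comparison is line-for-line the one establishing that $(sh,sh')$ is a map of mixed complexes. \emph{(3)} Verify that every cycle occurring on either side --- the shuffles, the cyclic shuffles, and all their cubical faces --- satisfies the modulus condition and lies in good enough position that $\mu_*$ is defined and the face/boundary formulas used in step (2) are literally valid; where necessary invoke a moving-lemma reduction in the spirit of \cite{Bl2} (the same device already used to pass to the reduced complex). \emph{(4)} Finally substitute $\partial'\xi=0$ and $\partial'\eta=0$: the last two terms on the right of the master identity vanish, leaving the stated equality; passing to homology and using the quasi-isomorphism with the reduced complex then yields that $\delta_*$ is a graded derivation for $\wedge$ on $ACH_*$.

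\textbf{Main obstacle.} The genuine difficulty is concentrated in steps (2) and (3) and their interaction. The combinatorial identity in step (2) holds ``universally'' only once one knows that the motivic operator $\delta$ reproduces Connes' $B$ faithfully enough --- i.e.\ that the cubical faces of a $\delta$-inserted, cyclically-shuffled cycle match, with the correct signs, the terms produced by $bB+Bb$ and by the cyclic operator in the standard computation --- and pinning down this matching is the technical heart. At the same time, the cyclic shuffle inserts the $\delta$-coordinate \emph{in the interior} of a shuffle, which is precisely the operation most likely to violate admissibility (the modulus bound along the newly created face, and properness of intersections with faces), so step (3) is not a formality; this is where the \emph{variation of Totaro's cycle} enters, both to select a workable representative of $\xi\wedge'\eta$ and to certify that it and its faces are admissible. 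Once those two points are secured, the remaining signs and the final specialization are routine.
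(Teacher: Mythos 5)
Your high-level reading is right: the theorem is the cycle-theoretic shadow of the fact that the shuffle and cyclic shuffle together form a morphism of mixed complexes, and the operator $\wedge'$ is the homotopy certifying that $\delta_*$ is a derivation for $\wedge$. But you have mislocated the geometric content, and as a result your step (1) description of $\xi\wedge'\eta$ is incorrect in a way that would sink step (2). You describe $\xi\wedge'\eta$ as ``$\mu_*$ of a signed sum, over cyclic shuffles, of cycles assembled from $\xi$, $\eta$ and one $\delta$-coordinate.'' That object would be a $d$-cycle in $\Diamond_{n+1}$, whereas the left-hand side of the identity already lives in $\calZ_d(\Diamond_{n+1})$, so $\partial(\xi\wedge'\eta)$ would land in $\calZ_{d-1}(\Diamond_n)$ --- the wrong bidegree. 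The paper's $\wedge'$ is $(d+1)$-dimensional in $\Diamond_{n+2}$: it is built from the \emph{extra-degenerate concatenation} $\times'$, which inserts not a coordinate but a parametrized \emph{curve} --- the variation $C_2^{a,(b_1,b_2)}$ of Totaro's cycle --- in place of the pair of affine coordinates, then triple-shuffles by $\perm_{(1,r_1,r_2)}$ treating the two new cubical slots as a block. Its role is not (as you suggest) to ``select a workable representative'' for admissibility: the boundary formula $\partial C_2^{a,(b_1,b_2)} = (1/a,b_1)+(1/a,b_2)-(1/a,b_1b_2)$ of Lemma~\ref{cycle C_2} \emph{is} the computational heart. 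The paper itself warns that it does ``not actually use what Loday calls the cyclic shuffle product,'' so your claim that the verification is ``line-for-line'' the Eilenberg--Zilber/Loday computation is precisely what fails; the permutation bookkeeping (Lemmas~\ref{multiple shuffle 1}--\ref{multiple shuffle 3}) parallels Loday, but the algebraic relation that Loday produces internally to the Hochschild complex is here supplied externally as a geometric boundary of a 1-cycle.

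Two smaller discrepancies. First, the paper does not prove your ``master identity'' with the $(\partial'\xi)\wedge'\eta$ correction terms; instead it reduces to $k$-rational points (where all $\xi,\eta$-direction faces vanish automatically, and where the hypotheses $\xi,\eta\in\calZ_\bullet(\cdot)_0$ with $\partial'=0$ justify the reduction for general cycles), computes the three $\delta$--$\wedge$ terms as signed sums over $\perm_{(1,r_1,r_2)}$ of the points $(xy,1/x)$, $(xy,1/y)$, $(xy,1/(xy))$, and matches them against $\partial C_2$ after reducing further to $r_1=r_2=n=0$. Your master identity is a reasonable way to organize the argument and would follow from the same inputs, but it is not what is written. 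Second, no moving lemma is invoked in the admissibility check: properness of $\times'$ with the faces and the modulus bound are verified directly in the lemma following Definition~\ref{definition cyclic shuffle product} and in Lemma~\ref{domain of product}; the moving-lemma device is used only earlier, to pass to the reduced subcomplex.
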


\begin{corollary}The triple $(ACH_* (k, *; A), \wedge, \delta_*)$ is a CDGA. In particular, when $A= k[x]/(x^2)$ and for $0$-cycles, the CDGA $(\Omega_{k/\mathbb{Z}} ^*, \wedge, d)$ is motivic. 
\end{corollary}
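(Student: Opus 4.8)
The plan is to assemble the CDGA axioms from the structural results already established and then to transport the whole package through the Bloch--Esnault isomorphism in the zero-cycle case.

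For the first assertion, I would first record that the constructions of \S5 make $\wedge = \mu_* \circ sh_*$ into an associative and graded-commutative product on $ACH_*(k,*;A)$ for $A = k[x]/(x^m)$, the grading for the sign rule being the $\square$-degree (the $r$-index) and the commutativity sign being $(-1)^{r_1 r_2}$; associativity and graded commutativity follow from the corresponding properties of the cubical shuffle together with the associativity and commutativity of $\mu\colon \mathbb{G}_m \times \mathbb{G}_m \to \mathbb{G}_m$ used to collapse the $\mathbb{G}_m$-factors. Next, $\delta_*$ raises the $\square$-degree by one and satisfies $\delta_* \circ \delta_* = 0$: in the mixed complex of the earlier theorem the Connes operator obeys $\delta^2 = 0$ on the nose, so it does so on homology a fortiori. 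Finally, the graded Leibniz rule $\delta_*(\xi \wedge \eta) = (\delta_*\xi)\wedge\eta + (-1)^{r_1}\xi\wedge(\delta_*\eta)$ on classes is exactly the preceding Theorem, the error term $-\partial(\xi\wedge'\eta)$ vanishing in $ACH$. Together these three facts say that $(ACH_*(k,*;A),\wedge,\delta_*)$ is a CDGA, with unit the class of the appropriate weight-one cycle (in the case $A=k[x]/(x^2)$ this is the $0$-cycle corresponding to $1 \in k = \Omega^0_{k/\mathbb{Z}}$).

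For the ``in particular'' part, I would recall the Bloch--Esnault identification $ACH_0(k,n;k[x]/(x^2)) \simeq \Omega^{n-1}_{k/\mathbb{Z}}$ underlying Theorem~\ref{CCH_0}, under which the standard parametrized $0$-cycle attached to a datum $(a; f_1,\dots,f_{n-1})$ corresponds to $a\, d\log f_1 \wedge \cdots \wedge d\log f_{n-1}$. It then suffices to verify two compatibilities on such generators. First, that $\wedge$ of the cycles attached to $(a;f_1,\dots,f_p)$ and $(b;g_1,\dots,g_q)$ is, after $\mu_*$ collapses the two $\mathbb{G}_m$-coordinates, the cycle attached to $(ab; f_1,\dots,f_p,g_1,\dots,g_q)$, matching the wedge of the corresponding forms up to the shuffle signs already built into $sh_*$. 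Second, that $\delta_*$ carries the class of $(a;f_1,\dots,f_{n-1})$ to the class represented by the cyclic sum that computes $da \wedge d\log f_1 \wedge \cdots \wedge d\log f_{n-1}$, which equals $d(a\, d\log f_1 \wedge \cdots \wedge d\log f_{n-1})$ because each $d\log f_i$ is closed. Granting these, the isomorphism is one of CDGAs, so $(\Omega^*_{k/\mathbb{Z}}, \wedge, d)$ is motivic.

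The main obstacle is the second compatibility: comparing the explicit cycle-level formula for the motivic Connes operator $\delta$ --- a signed sum over cyclic-type rearrangements of the coordinates, together with the new $\mathbb{A}^1$-coordinate introduced in passing from $\calZ(n)$ to $\calZ(n+1)$ --- with the de Rham differential in the Bloch--Esnault presentation of $\Omega^{n-1}_{k/\mathbb{Z}}$ by additive $0$-cycles. Concretely one must track $\delta$ on the standard parametrized cycles and check that the boundary and modulus conditions force the output, in $ACH_0$, to be the class of the cycle computing $da \wedge d\log f_1 \wedge \cdots \wedge d\log f_{n-1}$; this is a refinement of the bookkeeping already behind $CCH_0(k,n;k[x]/(x^2)) \simeq \Omega^{n-1}_{k/\mathbb{Z}}/d\Omega^{n-2}_{k/\mathbb{Z}}$, so the computation can be organized as a sharpening of the proof of Theorem~\ref{CCH_0}. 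The first compatibility is comparatively routine, being the familiar statement that the shuffle product of Hochschild-type chains induces the wedge of Kähler differentials, here made explicit on the additive cycle generators; one should also check that $\wedge$, a priori defined only on the image of $sh_*$, induces an everywhere-defined operation on $ACH_0$, which holds because those generators span.
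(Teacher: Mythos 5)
Your overall strategy is the paper's: the main claim is a formal assembly of the structural facts of \S 5--6 (graded commutativity of $\wedge$, $\delta_*^2 = 0$, and the graded Leibniz rule from Proposition~\ref{Connes is a derivation}), and the ``in particular'' is supposed to follow from the Bloch--Esnault identification $ACH_0(k,n;k[x]/(x^2)) \simeq \Omega^n_{k/\bbZ}$ together with the explicit computation of $\delta_*$ on zero-cycles. The paper gives essentially no further argument at this point, so your fleshing-out of the second half is where the content lies.

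However, the two compatibilities you state are each off by a combinatorial scalar, and the paper itself records one of them: the corollary immediately following Theorem~\ref{commutative} states that, under the Bloch--Esnault map, $\delta_*\colon\Omega^n_{k/\bbZ}\to\Omega^{n+1}_{k/\bbZ}$ equals $(n+1)d$, not $d$ --- the signed sum over the $n+1$ insertion positions $\delta_k$ produces $n+1$ copies of the same term once one anti-symmetrizes the $d\log$ factors. Your sketch asserts that $\delta_*$ gives $d$ on the nose, which contradicts this. Symmetrically, the first compatibility is also off: each of the $\binom{p+q}{p}$ shuffles in $sh_*$ contributes the same monomial in $\Omega^{p+q}$ after sorting, so the class of $\xi\wedge\eta$ corresponds to $\binom{p+q}{p}\,\omega\wedge\tau$, not to $\omega\wedge\tau$. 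Consequently the raw Bloch--Esnault identification is \emph{not} a morphism of differential graded algebras. The repair, which should be made explicit, is the degree-wise rescaling $\omega\mapsto\frac{1}{n!}\,\omega$ on $\Omega^n_{k/\bbZ}$: one checks that $(n+1)\cdot\frac{1}{(n+1)!}=\frac{1}{n!}$ and $\binom{p+q}{p}\cdot\frac{1}{(p+q)!}=\frac{1}{p!\,q!}$, so the rescaled isomorphism simultaneously converts $\delta_*$ into $d$ and the cycle-theoretic $\wedge$ into the usual wedge of forms, giving the asserted CDGA isomorphism. Without this normalization, or without at least reproducing the factor $n+1$ that the paper itself records, the ``in particular'' step does not go through as written.
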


This answers the Challenge 5.3 of Bloch and Esnault in \cite{BE1} that asks for a motivic description of the wedge product and the exterior product on the K\"ahler differentials.

\section{Additive Chow theory and additive $K$-theory}
Several results on various $K$-theories presented in this section guide our studies, where we use a paradigm in \cite{L2} on the classification of various $K$-theories:

\begin{table}[h]
\begin{center}
\begin{tabular}{|c|c|c|}\hline
 & noncommutative & commutative \\
 \hline
 multiplicative & Leibniz $K$-theory? & Quillen $K$-theory \\
 \hline
 additive & Hochschild homology & cyclic homology\\
\hline
\end{tabular}
\end{center}
\end{table}

For instance, we think of the Hochschild homology theory as the additive noncommutative $K$-theory. We will compare the Quillen $K$-theory, Hochschild homology, and cyclic homology. First, recall the following result:

\begin{theorem}[\cite{NS, S, T}]\label{Quillen}The group homology of $GL (k)$ has the following list of properties: 
\begin{enumerate}
\item [Q1)] stability:
$$
H_n (GL_n (k) ;\bbQ) \overset{\simeq}{\to} H_n (GL_{n+1}(k); \bbQ) \overset{\simeq}{\to} \cdots \overset{\simeq}{\to} H_n (GL (k); \bbQ)
$$
\item [Q2)] 1st obstruction to stability: The following sequence is exact:
$$
H_n (GL_{n-1} (k); \bbQ) \to H_n (GL_n (k); \bbQ) \to K_n ^M (k) \to 0
$$
\item [Q3)] primitive part: $H_* (GL(k), \bbQ)$ is a graded Hopf algebra with its primitive part
$$
Prim_* (H_* (GL(k), \bbQ)) \simeq K_* (k)_{\bbQ}.
$$
\item [Q4)] $0$-cycles: The higher Chow groups give the obstruction groups in {\rm Q2)}:
$$CH_0 (k, n) \simeq K_n ^M (k).$$
\end{enumerate}
\end{theorem}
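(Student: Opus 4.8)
The four assertions are all classical structural facts about the $K$-theory of a field; my plan is to recall, statement by statement, the mechanism that produces each, and then to indicate how Q4) is deduced from the first three. None of this needs a new idea beyond the cited works of Suslin, Nesterenko--Suslin, and Totaro, so the task is really to assemble the right pieces.

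For Q1) and Q2) I would run the homology-stability machinery of Suslin and Nesterenko--Suslin. One lets $GL_n(k)$ act on the complex of unimodular sequences in $k^n$ (ordered partial frames); because $k$ is infinite, this complex is acyclic in a range that grows with $n$, so the equivariant spectral sequence computing $H_*(GL_n(k);\bbQ)$ collapses enough to show that $H_n(GL_n(k)) \to H_n(GL_{n+1}(k))$ is an isomorphism, and Q1) then follows by passing to the colimit. Carrying the same spectral sequence one column further, the cokernel of $H_n(GL_{n-1}(k)) \to H_n(GL_n(k))$ is carried by a single $E^1$-term, and the heart of \cite{NS, S} is the identification of that term with the degree-$n$ graded piece of the Milnor complex of $k$: the relevant relative homology is generated by exterior products of diagonal $1 \times 1$ blocks $\langle u \rangle$, $u \in k^\times$, and manipulation by elementary matrices forces precisely the Steinberg relation $[u]\cdot[1-u]=0$. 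This gives the exact sequence of Q2) ending in $K_n^M(k) \to 0$.

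For Q3) I would invoke Quillen's plus construction, $H_*(GL(k);\bbQ) = H_*(BGL(k)^+;\bbQ)$, together with the fact that block sum makes $BGL(k)^+$ an H-space (indeed an infinite loop space). Hence this graded vector space is a connected, graded Hopf algebra over $\bbQ$; by the Milnor--Moore theorem it is primitively generated (in fact free graded-commutative on its primitives), and the Hurewicz map identifies the primitives with $\pi_*(BGL(k)^+)\otimes\bbQ = K_*(k)_\bbQ$, which is Q3).

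For Q4) I would combine Q2) with the Nesterenko--Suslin/Totaro theorem computing Bloch's top higher Chow group: in this corner of the cycle complex one compares $z^n(k,\bullet)$ with Milnor's complex, matching a closed $0$-cycle of simplicial degree $n$ (a collection of closed points of $(\bbP^1\setminus\{1\})^n$ in good position) with a Steinberg symbol and matching the face maps with the tame-symbol boundary, so that $CH_0(k,n)\simeq K_n^M(k)$; combined with Q2) this exhibits the higher Chow group as exactly the stabilization obstruction group, which is Q4). Since every ingredient is on the shelf there is no genuine obstacle, but if one were to reprove the package from scratch the delicate point is Q2): pinning down the obstruction \emph{as} $K_n^M(k)$, rather than merely as some unidentified subquotient of $H_n(GL_n(k))$, is exactly the Steinberg-relation bookkeeping on the unimodular-sequence complex that forms the technical core of the cited papers.
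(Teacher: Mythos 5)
This theorem carries no proof in the paper; it is stated purely as a review of classical facts, with the labor delegated entirely to the citations \cite{NS, S, T}. There is therefore nothing in the text itself to compare your proposal against. That said, your outline is an accurate account of what those references actually do: Q1) and Q2) are indeed obtained from the equivariant (hyper)homology spectral sequence for the action of $GL_n(k)$ on the highly acyclic complex of unimodular sequences (Suslin, Nesterenko--Suslin), with the extra $E^1$-column in degree $n$ giving the Steinberg presentation of $K_n^M(k)$; Q3) is exactly the $H$-space / Milnor--Moore / rational Hurewicz package applied to $BGL(k)^+$; and Q4) is the Nesterenko--Suslin and Totaro identification $CH^n(\spec k,n)=CH_0(k,n)\simeq K_n^M(k)$, Totaro's version being the cubical one used implicitly here. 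The one point to be aware of is that your sketch of Q4) ends by saying it ``combines with Q2),'' but the isomorphism $CH_0(k,n)\simeq K_n^M(k)$ is proved directly as a comparison of complexes and does not logically depend on Q2); Q2) merely explains in what sense it ``gives the obstruction groups.'' With that small reading adjustment, your summary matches the intended source arguments.
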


\bigskip

When we work with the Lie algebra $gl (k)$ and its Lie algebra homology $H_n (gl(k))$ instead of the group $GL(k)$ and its group homology, we get \emph{the additive $K$-theory} $K_n ^+ (k)$. This group $K_n ^+ (k) \simeq HC_{n-1} (k)$, thus the additive $K$-theory is in fact the cyclic homology, with a shift of the degree. This theory too enjoys analogous properties:

\begin{theorem}[\cite{FT, L, LQ}]\label{cyclic homology}The Lie algebra homology of $gl(k)$ has the following list of properties:
\begin{enumerate}
\item [C1)] stability:
$$H_n (gl_n (k); k) \overset{\simeq}{\to} H_n (gl_{n+1} (k); k) \overset{\simeq}{\to} \cdots \overset{\simeq}{\to} H_n (gl(k); k)
$$
\item [C2)] 1st obstruction to stability: The following sequence is exact:
$$
H_n (gl_{n-1} (k); k) \to H_n (gl_n (k); k) \to \Omega_{k/\bbQ} ^{n-1} / d \Omega_{k/\bbQ} ^{n-2} \to 0.$$
\item [C3)] primitive part: $H_* (gl(k); k)$ is a Hopf algebra with its primitive part
$$Prim_* (H_* (gl(k); k)) \simeq K^+ _* (k)$$ the additive $K$-theory of Feigin and Tsygan, which is isomorphic to the cyclic homology $HC_{*-1} (k)$ over $\bbQ$.
\end{enumerate}

\end{theorem}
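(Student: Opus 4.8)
.

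\begin{proof}[Proof sketch]
The statement is the Loday--Quillen--Tsygan theorem, so the plan is to reproduce its proof (\cite{LQ, FT}; see also \cite{L}), specialized to the algebra $A = k$. Throughout, $gl_n(k)$ is regarded as a Lie algebra over $\bbQ$, the homology has trivial coefficients extended to $k$, and $HC_*$ is cyclic homology relative to $\bbQ$; since $k$ is a filtered colimit of smooth $\bbQ$-algebras, its cyclic homology is computed by the de Rham complex, which is what produces the group $\Omega^{n-1}_{k/\bbQ}/d\,\Omega^{n-2}_{k/\bbQ}$ appearing in \emph{C2)} as the top nontrivial quotient of $HC_{n-1}(k)$.

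First I would reduce \emph{C3)} to a computation of primitives. Passing to the stable Lie algebra $gl(k) = \dlim_n gl_n(k)$, one has $H_*(gl(k);k) = \dlim_n H_*(gl_n(k);k)$, computed by the Chevalley--Eilenberg complex $\Lambda^\bullet gl(k)$. The block-sum inclusions $gl_m(k) \oplus gl_n(k) \hookrightarrow gl_{m+n}(k)$ become mutually conjugate after stabilizing, hence endow $H_*(gl(k);k)$ with an associative graded-commutative product, while the diagonal $gl \to gl \oplus gl$ induces a cocommutative coproduct; together they make $H_*(gl(k);k)$ a connected graded-commutative and cocommutative Hopf algebra over $k \supseteq \bbQ$. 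By the Milnor--Moore theorem combined with graded-commutativity it is then the free graded-commutative algebra on its module of primitives $Prim_*\,H_*(gl(k);k)$, so \emph{C3)} follows once this module is identified with $HC_{*-1}(k)$; the accompanying isomorphism $K^+_*(k) \simeq HC_{*-1}(k)$ is the defining computation of Feigin--Tsygan's additive $K$-theory.

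The identification of the primitives is the heart of the matter, and I would carry it out via the \emph{generalized trace map}. Decomposing $gl_n(k) = M_n(\bbQ) \otimes_{\bbQ} k$, one has for each $p$ a map $\theta_p \colon gl_n(k)^{\otimes p} \to k^{\otimes p}$ given on pure tensors by $(r_1 \otimes a_1) \otimes \cdots \otimes (r_p \otimes a_p) \mapsto \tr(r_1 \cdots r_p)\, a_1 \otimes \cdots \otimes a_p$; antisymmetrizing and assembling over $p$ yields a morphism of complexes from the Chevalley--Eilenberg complex of $gl(k)$ to the free graded-commutative algebra on the cyclic complex of $k$, compatibly with the Hopf structures. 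The task is to show this morphism is a quasi-isomorphism, and this is exactly where invariant theory enters: by the first and second fundamental theorems for $GL_n$, for $n \geq p$ the space of invariants of $M_n(\bbQ)^{\otimes p}$ under simultaneous conjugation is freely spanned by the products of traces along the cycles of a permutation $\sigma \in S_p$. Tensoring with $k$ and passing to invariants, equivalently coinvariants by semisimplicity in characteristic zero, this computes the stable $gl_n(k)$-coinvariants of $\Lambda^\bullet gl_n(k)$ and matches the resulting complex, with its differential, to the cyclic complex of $k$; the trace map is then an isomorphism on primitives, $Prim_*\,H_*(gl(k);k) \simeq HC_{*-1}(k)$. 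I expect the fine bookkeeping here --- pinning down the coinvariant complex exactly, including signs and the identification of the differentials --- to be the main obstacle.

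Finally, \emph{C1)} and \emph{C2)} I would deduce from the same analysis. For \emph{C1)}, the invariant-theoretic description is already stable: in homological degrees $\leq n$ the coinvariant complex of $\Lambda^\bullet gl_m(k)$ does not change once $m \geq n$, since no permutation in $S_{\leq n}$ requires more than $n$ rows; hence $H_n(gl_m(k);k) \to H_n(gl(k);k)$ is an isomorphism for all $m \geq n$, which is the chain of isomorphisms asserted in \emph{C1)}. For \emph{C2)}, one extracts from $\theta_n$ on the top exterior power the map $\lambda_n \colon \Lambda^n gl_n(k) \to \Omega^{n-1}_{k/\bbQ}/d\,\Omega^{n-2}_{k/\bbQ}$: it is surjective because the $n$-cycle in $S_n$ first occurs at level $n$, it kills the image of $\Lambda^n gl_{n-1}(k)$, and a diagram chase with the Chevalley--Eilenberg differential shows that the induced map $H_n(gl_n(k);k) \to \Omega^{n-1}_{k/\bbQ}/d\,\Omega^{n-2}_{k/\bbQ}$ is surjective with kernel exactly the image of $H_n(gl_{n-1}(k);k)$, which is the exact sequence of \emph{C2)}.
\end{proof}
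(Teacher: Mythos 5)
The paper does not prove this theorem; it is stated as a background result with a bare citation to \cite{FT, L, LQ}, so there is no ``paper's proof'' to compare against. Your sketch reproduces the standard Loday--Quillen--Tsygan argument from those references (generalized trace, $GL_n$-invariant theory, Milnor--Moore), which is the intended source, so the overall route is correct and appropriate.

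Two points in the sketch are looser than the standard treatment and would need tightening if this were to be a real proof. First, for \emph{C2)} the assertion that ``the $n$-cycle in $S_n$ first occurs at level $n$'' is misleading: the trace monomial $\tr(r_{\sigma(1)}\cdots r_{\sigma(n)})$ associated to an $n$-cycle $\sigma$ is a conjugation-invariant of $M_m(\bbQ)^{\otimes n}$ for every $m\geq 1$; what changes at $m=n$ is that the trace monomials indexed by $S_n$ become linearly independent, whereas for $m\leq n-1$ the second fundamental theorem imposes relations (coming from the Cayley--Hamilton relation / antisymmetrization over $m+1$ indices). The identification of the cokernel of $H_n(gl_{n-1})\to H_n(gl_n)$ with $\Omega^{n-1}_{k/\bbQ}/d\Omega^{n-2}_{k/\bbQ}$ is a theorem of Loday--Procesi and requires analyzing exactly which relations appear one step below the stable range, not just locating the top cycle. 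Second, the passage from ``$Prim_*\,H_*(gl(k);k)\simeq HC_{*-1}(k)$'' to \emph{C3)} also uses that $K^+_*(k)$ is by construction (Feigin--Tsygan) this primitive part; you mention this, but it should be made explicit that the displayed isomorphism in \emph{C3)} is two statements glued together: the definitional one ($Prim_* = K^+_*$) and the computational one ($K^+_* \simeq HC_{*-1}$ in characteristic zero), the latter being the content of the trace-map argument. These are expository rather than mathematical gaps, but they are exactly the places where a careless reading of your sketch could go wrong.
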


From the comparison of the Quillen $K$-theory (Theorem \ref{Quillen}) and the additive $K$-theory (Theorem \ref{cyclic homology}), we can call the group $\Omega_{k/\bbQ} ^{n-1} / d \Omega_{k/\bbQ} ^{n-2}$ the Milnor-cyclic homology $HC_{n-1} ^M (k)$. (\emph{c.f.} 10.3.3 in \cite{L}) It is easy to see that for ${\rm char} (k) = 0$, we have $\Omega_{k/\bbQ} ^{n-1} = \Omega_{k/\bbZ} ^{n-1}$.

One problem we see here is that C4) is missing from Theorem \ref{cyclic homology}, and despite the name \emph{additive} higher Chow group, its zero-cycle group $ACH_0 (k, n-1)$ does not produce the obstruction group $HC_{n-1} ^M (k)$. That means, although the groups $ACH$ are important in the calculation of the motivic cohomology groups of the fat point $\spec \left( k[\epsilon]/ \epsilon^2 \right)$ as seen in \cite{P1},  the additive Chow theory is not quite the right additive cycle theory for the additive $K$-theory. 

But, the Hochschild homology and the theory of Leibniz homology of Cuvier and Loday (\cite{Cu, L}) show that the naming of $ACH$ was very close; the Leibniz homology is a noncommutative vesion of the Lie algebra homology in the sense that we systematically ignore the antisymmetry axiom of Lie algebras, where we replace the Jacobi identity by the Leibniz rule, and we replace the wedge product $\wedge$ by the tensor product $\otimes$ in the Chevalley-Eilenberg resolution of the Lie algebra. (See p. 302 and p. 326 in \cite{Cu})

\begin{theorem}[\cite{BE2, Cu, L}]\label{Hochschild homology}The Leibniz homology $HL_* (gl(k))$ of $gl(k)$ has the following list of properties:
\begin{enumerate}
\item [H1)] stability:
$$
HL_n (gl_n (k)) \overset{\simeq}{\to} HL_n (gl_{n+1} (k)) \overset{\simeq}{\to} \cdots \overset{\simeq}{\to} HL_n (gl (k))
$$
\item [H2)] 1st obstruction to stability: The following sequence is exact:
$$
HL_n (gl_{n-1} (k)) \to HL_n (gl_n (k)) \to \Omega_{k/\bbQ} ^{n-1} \to 0.$$
\item [H3)] primitive part: $HL_* (gl(k))$ is a Hopf algebra with its primitive part
$$
Prim_* (HL_* (gl(k))) \simeq HH_{*-1} (k),
$$ the Hochschild homology of $k$ over $\bbQ$.
\item [H4)] $0$-cycles: The additive higher Chow groups give the obstruction groups in {\rm H2)}:
$$ACH_0 (k, n-1) \simeq \Omega_{k/\bbQ} ^{n-1}.$$
\end{enumerate}
\end{theorem}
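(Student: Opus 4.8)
This statement is a compendium of known facts with two genuinely independent ingredients, and I would prove it accordingly. Properties H1)--H3) form the Leibniz-homology counterpart of Theorem \ref{cyclic homology} --- the Loday--Quillen--Tsygan theorem together with the Feigin--Tsygan calculation --- and are due to Cuvier and Loday (\cite{Cu, L}); H4) is the computation of additive zero-cycles of Bloch--Esnault (\cite{BE2}). The plan is to reprove H1)--H3) by transporting the Lie-algebra argument through the Chevalley--Eilenberg complex with the wedge product replaced by the tensor product, and to reprove H4) by constructing an explicit residue map $ACH_0(k,n-1) \to \Omega^{n-1}_{k/\bbZ}$ and showing that it is an isomorphism.

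For H1)--H3), I would form the Leibniz chain complex $CL_*(\mathfrak{g})$ with $CL_p(\mathfrak{g}) = \mathfrak{g}^{\otimes p}$ and the Leibniz boundary built from the bracket, set $gl(k) = \varinjlim_n gl_n(k)$, and endow $HL_*(gl(k))$ with the Hopf structure coming from the block-sum maps $gl_n(k) \oplus gl_m(k) \hookrightarrow gl_{n+m}(k)$. The central step is the Leibniz analogue of the Loday--Quillen isomorphism: using the first fundamental theorem of invariant theory for $GL_n$ to express the conjugation-invariants of $gl_n(k)^{\otimes p}$ in terms of trace monomials, but \emph{without} the antisymmetrization that the Chevalley--Eilenberg complex of the Lie case imposes, one obtains an isomorphism $HL_*(gl(k)) \cong T(HH_{*-1}(k))$ onto the tensor algebra on the shifted Hochschild homology of $k$. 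Cuvier's computation moreover identifies the module of primitives with the space of generators $HH_{*-1}(k)$, which is H3). For H1), I would show that $gl_n(k) \hookrightarrow gl_{n+1}(k)$ induces an isomorphism on $HL_n$ by the usual homological-stability mechanism for these complexes (equivalently, the invariant-theoretic description stabilizes once $n$ exceeds the homological degree). For H2), I would carry out the Leibniz version of Loday's Milnor-cyclic homology (\cite[10.3.3]{L}): the cokernel of $HL_n(gl_{n-1}(k)) \to HL_n(gl_n(k))$, read off from the model $T(HH_{*-1}(k))$, is the degree-$n$ space of generators $HH_{n-1}(k)$, and Hochschild--Kostant--Rosenberg --- valid on the nose for a field of characteristic zero after reducing to finitely generated, hence smooth, subextensions of $\bbQ$ --- identifies it with $\Omega^{n-1}_{k/\bbQ}$, which equals $\Omega^{n-1}_{k/\bbZ}$ in characteristic zero.

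For H4), I would define a residue map $\rho : ACH_0(k,n-1) \to \Omega^{n-1}_{k/\bbZ}$ sending an irreducible admissible zero-cycle supported at a closed point $P = (a; b_1, \dots, b_{n-1})$ of $\bbG_m \times \square^{n-1}$, with residue field $k(P)$, to $\tr_{k(P)/k}\big( a^{-1}\, \mathrm{dlog}\, b_1 \wedge \cdots \wedge \mathrm{dlog}\, b_{n-1} \big)$, with the normalization and signs of \cite{BE2}, and extend it additively. Three points must be checked. First, well-definedness: $\rho$ must annihilate the intersection boundary of every admissible $1$-cycle $C$; this is the substantive step, in which the modulus-$2$ condition along $\{x = 0\}$ forces, through the normalization of $C$ and the residue theorem on its smooth compactification, precisely the Leibniz relation $d(uv) = u\,dv + v\,du$ together with the Steinberg-type relations holding in $\Omega^*_{k/\bbZ}$. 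Second, surjectivity: one exhibits explicit parametrized cycles --- graphs of suitable rational maps from $\bbA^1$ into $\bbG_m \times \square^{n-1}$ with a single controlled pole --- mapping to the symbols $a^{-1}\, da_1/a_1 \wedge \cdots \wedge da_{n-1}/a_{n-1}$, which generate $\Omega^{n-1}_{k/\bbZ}$. Third, injectivity: using admissibility together with a moving/presentation lemma in the style of \cite{Bl2}, one shows that every class of $ACH_0(k,n-1)$ is a combination of such explicit cycles and that the only relations among them are the images of the $1$-cycle boundaries treated in the first point.

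The main obstacle is the well-definedness and injectivity of $\rho$ in H4), that is, matching the relations imposed by the modulus condition on additive $1$-cycles with the defining relations of the de Rham complex. This is the technical core of the Bloch--Esnault computation and is genuinely cycle-theoretic; by contrast, H1)--H3) is a comparatively formal translation of the Loday--Quillen--Tsygan and Feigin--Tsygan story, once the tensor-algebra bookkeeping and the invariant theory of $GL_n$ are in place.
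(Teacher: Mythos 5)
The paper gives no proof of this theorem: it is a literature recall cited to \cite{BE2, Cu, L}, and the text simply moves on to draw the analogy with Theorems \ref{Quillen} and \ref{cyclic homology}. Your decomposition of the statement mirrors the paper's citations exactly — H1)--H3) to the Cuvier--Loday Leibniz analogue of Loday--Quillen--Tsygan, with the tensor algebra $T(HH_{*-1}(k))$ replacing the free graded-commutative algebra on $HC_{*-1}(k)$, and H4) to Bloch--Esnault's Theorem 6.4 via the residue map and the modulus condition — and your sketches are faithful summaries of those sources (including the observation, also made in the paper, that $\Omega^{*}_{k/\bbQ} = \Omega^{*}_{k/\bbZ}$ in characteristic zero). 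So the proposal is correct and consistent with what the paper intends, differing only in that you actually sketch the proofs whereas the paper contents itself with the citations.
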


We can call the group $\Omega_{k/\bbQ} ^{n-1}$ the Milnor-Hochschild homology group $HH_{n-1} ^M (k)$ of $k$ over $\bbQ$ using the analogy between Theorem \ref{Quillen} and Theorem \ref{Hochschild homology} as before. (\emph{c.f.} p. 337 in \cite{L})

\bigskip

Thus, we may suspect that the group $ACH$ should actually be called the additive \emph{noncommutative} higher Chow groups. Since the Hochschild homology $HH_n$ and the cyclic homology $HC_n$ satisfy the Connes periodicity sequence
$$\cdots \to HH_n (k) \overset{I}{\to} HC_n (k) \overset{S}{\to} HC_{n-2} (k) \overset{B}{\to} HH_{n-1} (k) \to \cdots,$$ the absence of C4) suggests the following new question:

\begin{question}Can we find a cycle complex, to be named as the \emph{additive commutative higher Chow complex} or \emph{cyclic higher Chow complex}, that satisfies at least the following two properties:
\begin{enumerate}
\item [(i)] its homology fits into a Connes periodicity exact sequence as the cyclic part, where the $ACH$ fits into the sequence as the Hochschild part.
\item [(ii)] its zero-cycle group gives the right obstruction group of stability: $\Omega_{k/\bbZ}^{n-1}/ d \Omega_{k/\bbZ} ^{n-2}$.
\end{enumerate}
\end{question}

The answer to this question is given in the Theorems \ref{Connes periodicity} and \ref{CCH_0}.

\section{Additive Chow groups associated to Artin local rings}

Let $q, n \geq 0$ be integers, and let $X$ be an equi-dimensional quasi-projective $k$-variety. Let $(Art/k)$ be the category of Artin local $k$-algebras with the residue field $k$. For an object $A$ in $(Art/k)$, we denote by $\mathfrak{m}_A$ its unique maximal ideal. For two rings $A, B \in (Art/k)$, a morphism $A \to B$ is a $k$-algebra homomorphism such that $f(\mathfrak{m}_A) = \mathfrak{m}_B$. For each integer $e \geq 0$, denote by $\mathbb{X}_e$, the set of $l$ indeterminates $\{ x_1, \cdots, x_e \}$.

The section extends the definition of the additive higher Chow groups with modulus in \cite{P1, R} to the category $(Art/k)$. In this way, we obtain a functor
$$ACH^q (X, n; -) : (Art/k) \to (Ab),$$ where $(Ab)$ is the category of abelian groups. When the Artin local ring $A$ is $k[x]/(x^m)$ where $m \geq 2$, we recover the notion of additive Chow groups with modulus. This functor $ACH^q (X, n; - )$ is a \emph{functor of Artin rings} in the sense of Schlessinger \cite{Sch}, which is sometimes called a \emph{deformation functor}.

The essential point of the construction is to use the embedding dimension (see Definition \ref{embedding dimension}) of a given Artin ring $A$ to construct an affine space on which our algebraic cycles reside. 

\subsection{Presentations of Artin local $k$-algebras}

Let $(A, \mathfrak{m}_A)$ be an Artin ring in $(Art/k)$.
\begin{definition}\label{embedding dimension} The \emph{embedding dimension} of $A$ is the dimension of the $k$-vector space $\mathfrak{m}_A/ \mathfrak{m}_A ^2$. We denote it by $\edim (A)$.
\end{definition}

The following is a polynomial version of the Cohen structure theorem for Artin local $k$-algebras.

\begin{lemma}\label{presentation}Let $(A, \mathfrak{m}_A)$ be an Artin ring in $(Art/k)$ with embedding dimension $e$. Then there is a polynomial presentation
$$0 \to J \to k[\bbX_e] \to A \to 0$$ such that the ideal $J$ of $k[\bbX_e]$ is contained in the ideal $\mathfrak{m}^2$, where $\mathfrak{m}$ is the ideal generated by $\bbX_e$ in $k[\bbX_e]$. This $e$ is the minimal number of indeterminates for which $A$ has a polynomial presentation.
\end{lemma}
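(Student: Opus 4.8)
The plan is to construct the presentation by hand from a minimal generating set of $\mathfrak{m}_A$, and then read off both assertions directly from the definition of $\edim$ together with Nakayama's lemma, which is available here because $A$ is Artin local and hence $\mathfrak{m}_A$ is nilpotent.

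First I would pick elements $a_1, \dots, a_e \in \mathfrak{m}_A$ whose residues form a $k$-basis of $\mathfrak{m}_A / \mathfrak{m}_A^2$, and define the $k$-algebra homomorphism $\phi \colon k[\bbX_e] \to A$ by $x_i \mapsto a_i$. To see that $\phi$ is surjective, observe that the identity $\mathfrak{m}_A = (a_1, \dots, a_e) + \mathfrak{m}_A^2$ iterates to $\mathfrak{m}_A = (a_1, \dots, a_e) + \mathfrak{m}_A^N$ for every $N$; since $\mathfrak{m}_A^N = 0$ for $N \gg 0$, the $a_i$ generate $\mathfrak{m}_A$, and then, using the splitting $A = k \oplus \mathfrak{m}_A$ coming from $A/\mathfrak{m}_A \simeq k$, every element of $A$ is in the image of $\phi$. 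Setting $J := \ker \phi$ gives the exact sequence $0 \to J \to k[\bbX_e] \to A \to 0$.

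Next I would check that $J \subseteq \mathfrak{m}^2$, where $\mathfrak{m} = (x_1, \dots, x_e)$. Given $f \in J$, write $f = c_0 + \ell + g$ with $c_0 \in k$ the constant term, $\ell = \sum_i c_i x_i$ the linear part, and $g \in \mathfrak{m}^2$. Applying $\phi$ yields $0 = c_0 + \sum_i c_i a_i + \phi(g)$ in $A$; reducing modulo $\mathfrak{m}_A$ forces $c_0 = 0$, and then reducing modulo $\mathfrak{m}_A^2$ (and using $\phi(g) \in \mathfrak{m}_A^2$) gives $\sum_i c_i \bar a_i = 0$ in $\mathfrak{m}_A/\mathfrak{m}_A^2$, so all $c_i = 0$ by the chosen linear independence of the $\bar a_i$. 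Hence $f = g \in \mathfrak{m}^2$. For the minimality claim, if $\psi \colon k[\bbX_d] \twoheadrightarrow A$ is any polynomial presentation, write $\psi(x_j) = c_j + m_j$ with $c_j \in k$, $m_j \in \mathfrak{m}_A$; the image of $\psi$ is the $k$-subalgebra generated by the $m_j$, so surjectivity forces the $m_j$ to generate $\mathfrak{m}_A$, whence $e = \dim_k \mathfrak{m}_A / \mathfrak{m}_A^2 \le d$.

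I do not expect a genuine obstacle in this argument; it is the standard Cohen structure theorem argument with nilpotence of $\mathfrak{m}_A$ replacing completeness. The only points that need care are the two uses of the associated graded of $A$ — namely the decomposition $A = k \oplus \mathfrak{m}_A$ furnished by the hypothesis $A/\mathfrak{m}_A \simeq k$, and the passage mod $\mathfrak{m}_A^2$ that pins down the linear part — and making sure the Nakayama-type step is justified by $\mathfrak{m}_A^N = 0$ rather than by completeness.
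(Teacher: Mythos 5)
Your proof is correct but takes a genuinely different route from the paper's. The paper invokes the Cohen structure theorem to get a formal power-series presentation $k[[\bbX_e]]/\widehat J \simeq A$ with $\widehat J \subset \widehat{\mathfrak m}^2$, then uses the Artinian condition $\widehat{\mathfrak m}^N \subset \widehat J$ to truncate: under $k[[\bbX_e]]/\widehat{\mathfrak m}^N \simeq k[\bbX_e]/\mathfrak m^N$ the image $I_N$ of $\widehat J$ pulls back to the desired ideal $J \subset k[\bbX_e]$, and the minimality of $e$ is then declared obvious. You instead build the presentation by hand: lift a $k$-basis of $\mathfrak m_A/\mathfrak m_A^2$ to $a_1,\dots,a_e \in \mathfrak m_A$, send $x_i \mapsto a_i$, and use nilpotence of $\mathfrak m_A$ in place of completeness. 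Your route is more elementary (no Cohen), self-contained, and has the advantage of making the minimality claim a genuine one-line argument rather than an assertion; the paper's route is shorter because it outsources the hard work.

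One small imprecision in your surjectivity step is worth flagging. You conclude ``the $a_i$ generate $\mathfrak m_A$'' from the iteration $\mathfrak m_A = (a_1,\dots,a_e) + \mathfrak m_A^N$, but the image of $\phi$ is the $k$-\emph{subalgebra} $k[a_1,\dots,a_e]$, not the ideal $(a_1,\dots,a_e)$, so ``$\mathfrak m_A$ is generated as an ideal by the $a_i$'' does not by itself give $\phi$ surjective (ideal generation allows coefficients from $A$, which you haven't yet shown to be in the image). The fix is the standard descending induction: with $\mathfrak m_A^N = 0$, show by downward induction on $j$ that $\mathfrak m_A^j \subseteq k[a_1,\dots,a_e]$, using at each step that any $m \in \mathfrak m_A^j$ is, modulo $\mathfrak m_A^{j+1}$, a $k$-polynomial of degree $j$ in the $a_i$. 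This is elementary and clearly what you have in mind, but the phrase ``the $a_i$ generate $\mathfrak m_A$'' as written conflates ideal generation with algebra generation. The rest of the argument (linear part of $f \in J$ vanishes by reducing mod $\mathfrak m_A$ and then mod $\mathfrak m_A^2$; minimality from the fact that any surjection $k[\bbX_d] \twoheadrightarrow A$ forces $\bar m_1,\dots,\bar m_d$ to span $\mathfrak m_A/\mathfrak m_A^2$) is clean and correct.
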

\begin{proof}Our proof uses the Cohen's theorem applied to $A$, that already gives us a presentation by a formal power series ring
\begin{eqnarray}\label{formal presentation}0 \to \widehat{J} \to k [[\bbX_e]] \to A \to 0\end{eqnarray} where $\widehat{\mathfrak{m}}$ is the ideal generated by $\bbX_e$ in $k[[\bbX_e]]$ and $\widehat{J}$ is an ideal in $k[[\bbX_e]]$ satisfying $\widehat{J} \subset \widehat{\mathfrak{m}}^2$. We choose a presentation with the minimal value of $e$.

Since $A$ is Artinian, there is an integer $N$ such that $\widehat{\mathfrak{m}}^N \subset \widehat{J} \subset \widehat{\mathfrak{m}}^2$. Let $I_N$ be the image of $\widehat{J}$ in $k[\bbX_e]/\mathfrak{m}^N$ under the isomorphism of $k$-algebras
$$ k[[\bbX_e]]/ \widehat{\mathfrak{m}}^N \overset{\sim}{\to} k[\bbX_e]/\mathfrak{m} ^N.$$ Then the sequence \eqref{formal presentation} gives an exact sequence
$$0 \to I_N \to k[\bbX_e]/\mathfrak{m}^N \to A\to 0.$$ Let $J$ be the ideal in $k[\bbX_e]$ whose image in $k[\bbX_e]/\mathfrak{m}^N$ is $I_N$, thus we obtain a commutative diagram
$$\xymatrix{ 0 \ar[r] & I_N \ar[r] & k[\bbX_e] /\mathfrak{m}^N \ar[r] & A \ar[r] & 0 \\ 0 \ar[r] & J \ar[u] \ar[r] & k[\bbX_e] \ar[u] & &}$$ that immediately yields a desired presentation. That $e$ is a minimal such integer is obvious.
\end{proof}

Let $PresFin(k)$ be a category defined in the following fashion. Its objects are pairs $\left( k[\bbX_e], J \right)$, where $J$ is an ideal of the polynomial ring $k[\bbX_e]$ such that $k[\bbX_e] / J \in (Art/k)$. For two objects $\left(k[\bbX_{e_1}], J_1\right)$ and $\left( k[\bbX_{e_2}], J_2 \right)$, and two $k$-algebra homomorphisms $\phi, \psi: k[\bbX_{e_1}] \to k[\bbX_{e_2}]$ such that $\phi(J_1) \subset J_2$, $\psi(J_1) \subset J_2$, we define an equivalence $\phi \sim \psi$ if the induced homomorphisms $\bar{\phi}, \bar{\psi}: k[\bbX_{e_1}]/J_1 \to k[\bbX_{e_2}]/J_2$ are equal. Morphisms in $PresFin(k)$ are the collection of all such $k$-algebra homomorphisms modulo the above equivalence.

\begin{lemma}Let $A_1, A_2$ be two Artin local $k$-algebras in $(Art/k)$ with presentations $P_1=\left(k[\bbX_{e_1}], J_1 \right)$, $P_2=(\left( k[\bbX_{e_2}], J_2 \right)$ given by the Lemma \ref{presentation}. Let $f: A_1 \to A_2$ be a morphism in $(Art/k)$. Then, it induces a unique morphism $\tilde{f}: P_1 \to P_2$ in $PresFin(k)$. Furthermore, there is a natural bijection
$$\hom_{(Art/k)} (A_1, A_2) \overset{\sim}{\to} \hom_{PresFin(k)} \left( P_1, P_2 \right).$$
\end{lemma}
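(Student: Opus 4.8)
The plan is to construct the morphism $\tilde f$ explicitly, check it is well-defined in $PresFin(k)$, verify uniqueness, and then assemble these pieces into the claimed bijection. First I would use the presentations supplied by Lemma \ref{presentation}: we have surjections $\pi_i : k[\bbX_{e_i}] \twoheadrightarrow A_i$ with kernels $J_i \subset \mathfrak m_i^2$, where $e_i = \edim(A_i)$. The idea is to lift the ring map $f : A_1 \to A_2$ to the polynomial level. Since $k[\bbX_{e_1}]$ is a polynomial ring, a $k$-algebra homomorphism $\Phi : k[\bbX_{e_1}] \to k[\bbX_{e_2}]$ is freely determined by the images of the generators $x_1,\dots,x_{e_1}$; I would choose $\Phi(x_j)$ to be any element of $k[\bbX_{e_2}]$ lifting $f(\pi_1(x_j)) \in A_2$ along $\pi_2$. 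A preliminary point worth recording is that, because $f$ is a morphism in $(Art/k)$ it sends $\mathfrak m_{A_1}$ into $\mathfrak m_{A_2}$, and $\pi_1(x_j) \in \mathfrak m_{A_1}$, so $f(\pi_1(x_j)) \in \mathfrak m_{A_2}$; hence we may and do choose the lifts $\Phi(x_j) \in \mathfrak m_2$ (the ideal generated by $\bbX_{e_2}$). By construction $\pi_2 \circ \Phi = f \circ \pi_1$, and so $\Phi(J_1) = \Phi(\ker\pi_1)$ maps to $f(\pi_1(\ker\pi_1)) = 0$ in $A_2$, i.e.\ $\Phi(J_1) \subset \ker \pi_2 = J_2$. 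Thus $\Phi$ defines a morphism $P_1 \to P_2$ in $PresFin(k)$, which we call $\tilde f$.

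Next I would check that $\tilde f$ is independent of the choice of lifts, which is exactly the statement that it is well-defined as a morphism in $PresFin(k)$. If $\Phi, \Phi'$ are two such lifts, then $\pi_2 \circ \Phi = f\circ \pi_1 = \pi_2 \circ \Phi'$ by construction, so the induced maps $\bar\Phi, \bar\Phi' : k[\bbX_{e_1}]/J_1 \to k[\bbX_{e_2}]/J_2$ — i.e.\ $A_1 \to A_2$ — coincide (both equal $f$ under the identifications $k[\bbX_{e_i}]/J_i \simeq A_i$). By the definition of the equivalence relation in $PresFin(k)$, $\Phi \sim \Phi'$, so $\tilde f$ is well-defined. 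This same argument gives uniqueness: any morphism $P_1 \to P_2$ in $PresFin(k)$ inducing $f$ on the quotients must, by definition of the equivalence, be the class of $\Phi$, so $\tilde f$ is the unique morphism lying over $f$.

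Finally, for the bijection, the assignment $f \mapsto \tilde f$ is the candidate map $\hom_{(Art/k)}(A_1,A_2) \to \hom_{PresFin(k)}(P_1,P_2)$. Injectivity is immediate: $\tilde f$ determines, via the induced map on quotients $k[\bbX_{e_i}]/J_i \simeq A_i$, the morphism $f$ back again. For surjectivity, given a morphism $[\Psi] : P_1 \to P_2$ in $PresFin(k)$ represented by $\Psi : k[\bbX_{e_1}] \to k[\bbX_{e_2}]$ with $\Psi(J_1)\subset J_2$, the induced map $\bar\Psi : A_1 \to A_2$ is a $k$-algebra homomorphism; I would check it is a morphism in $(Art/k)$, i.e.\ that it carries $\mathfrak m_{A_1}$ onto $\mathfrak m_{A_2}$ (not merely into it). Here one uses that $J_i \subset \mathfrak m_i^2$, so $\mathfrak m_{A_i} \simeq \mathfrak m_i/(J_i \cap \mathfrak m_i) $ and the cotangent space $\mathfrak m_{A_i}/\mathfrak m_{A_i}^2 \simeq \mathfrak m_i/\mathfrak m_i^2$ has dimension exactly $e_i$ by minimality in Lemma \ref{presentation}; any $k$-algebra map of local Artin $k$-algebras with residue field $k$ automatically sends $\mathfrak m_{A_1}$ into $\mathfrak m_{A_2}$, and the condition $f(\mathfrak m_{A_1}) = \mathfrak m_{A_2}$ in the paper's convention should be read accordingly (or is automatic). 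Then $\widetilde{\bar\Psi} = [\Psi]$ by the uniqueness just established, proving surjectivity. The only genuinely delicate point — the main obstacle — is the bookkeeping around the maximal-ideal condition defining morphisms in $(Art/k)$ and making sure the lift $\Phi$ can always be taken with $\Phi(\mathfrak m_1)\subset \mathfrak m_2$; once that is pinned down, everything else is the formal "lifting along a polynomial ring" argument together with the definition of the equivalence relation in $PresFin(k)$.
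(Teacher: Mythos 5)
Your proof is correct and takes essentially the same route as the paper: lift $f$ to a polynomial-ring map by choosing preimages of $f(\pi_1(x_j))$ under $\pi_2$, observe $\Phi(J_1)\subset J_2$ from $\pi_2\circ\Phi = f\circ\pi_1$, and note that any two lifts induce the same map on quotients and hence are identified in $PresFin(k)$. The only difference is that you spell out the bijection (which the paper dismisses as ``obvious'') and flag the maximal-ideal convention; both additions are sensible and do not change the substance of the argument.
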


\begin{proof}
We construct $\widetilde{f}$ that fits into the diagram:
$$\xymatrix{ 0 \ar[r] & J_1 \ar[d] \ar[r] & k[\mathbb{X}_{e_1}] \ar[r]^{q_1} \ar[d] ^{\widetilde{f}} & A_1 \ar[r] \ar[d] ^f & 0 \\
0 \ar[r] & J_2 \ar[r] & k[\mathbb{X}_{e_2}] \ar[r] ^{q_2} & A_2 \ar[r] & 0}$$

This is easy: for each $i \in \{ 1, \cdots, e \}$, pick $y_i \in q_2 ^{-1} f(q_1 (x_i))$ and define $\widetilde{f} (x_i) = y_i$. It determines $\widetilde{f}$. For $ g \in J_1$, since $q_1 (g) = 0$ we have $\widetilde{f} (g) \in q_2 ^{-1} (0) = J_2$. Suppose we choose a different set of $y_i ' \in q_2 ^{-1} (f (q_1 (x_i)))$ and the corresponding $\widetilde{f}'$. Then, $y_i - y_i ' \in J_2$ so that $q_2 (y_i ) = q_2 (y_i ') = f(q_1 (x_i))$. Hence $\widetilde{f}$ and $\widetilde{f}'$ induce the same map $f: A_1 \to A_2$. Hence in the category $PresFin (k)$, they are the equal morphisms. 

The bijection between the two sets obvious.\end{proof}

\begin{lemma}For two objects $P_i = (k[\mathbb{X}_{e_i}], J_i)$, $i = 1, 2$, if there is an isomorphism $P_1 \simeq P_2$ in $PresFin (k)$, then the corresponding Artin rings $A_1: =k[\mathbb{X}_{e_1}]/J_1$ and $ A_2: = k[\mathbb{X}_{e_2}]/J_2$ are isomorphic under the induced isomorphism.
\end{lemma}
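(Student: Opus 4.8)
The plan is to use that passing to the quotient, $(k[\bbX_e],J)\mapsto k[\bbX_e]/J$, is compatible with composition and identities of morphisms, and then to argue exactly as one shows that a functor sends isomorphisms to isomorphisms. First I would record the elementary compatibilities. Given a $k$-algebra homomorphism $\phi\colon k[\bbX_{e_1}]\to k[\bbX_{e_2}]$ with $\phi(J_1)\subset J_2$, it descends to a $k$-algebra homomorphism $\bar\phi\colon A_1\to A_2$; by the very definition of the equivalence relation in $PresFin(k)$, two representatives $\phi\sim\psi$ of the same morphism induce the same map $\bar\phi=\bar\psi$, so the assignment $\phi\mapsto\bar\phi$ depends only on the morphism class. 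Moreover $\overline{\psi\circ\phi}=\bar\psi\circ\bar\phi$ and $\overline{\mathrm{id}_{k[\bbX_e]}}=\mathrm{id}_{A}$ directly from the construction of the induced maps. (In particular this also confirms that composition in $PresFin(k)$ is well defined and that $\mathrm{id}_{P}$ is represented by $\mathrm{id}_{k[\bbX_e]}$.)

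Next, given the isomorphism $\alpha\colon P_1\simeq P_2$ in $PresFin(k)$ with inverse $\beta\colon P_2\to P_1$, I would choose ring-homomorphism representatives $\phi$ of $\alpha$ and $\psi$ of $\beta$. Since $\beta\circ\alpha=\mathrm{id}_{P_1}$, the ring homomorphism $\psi\circ\phi$ and the identity $\mathrm{id}_{k[\bbX_{e_1}]}$ both represent $\mathrm{id}_{P_1}$, hence are equivalent, so by the compatibilities of the previous paragraph $\bar\psi\circ\bar\phi=\overline{\psi\circ\phi}=\overline{\mathrm{id}_{k[\bbX_{e_1}]}}=\mathrm{id}_{A_1}$; symmetrically $\bar\phi\circ\bar\psi=\mathrm{id}_{A_2}$. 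Therefore $\bar\phi\colon A_1\to A_2$ is a bijective $k$-algebra homomorphism with inverse $\bar\psi$, i.e.\ an isomorphism of $k$-algebras, and it is independent of the chosen representative of $\alpha$ by the first paragraph, so it is legitimately ``the induced isomorphism''. Since an isomorphism of local rings carries the maximal ideal onto the maximal ideal, $\bar\phi$ is moreover an isomorphism in $(Art/k)$, which proves the lemma.

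I do not expect any genuine obstacle here. The only point deserving a moment's care is the bookkeeping that taking quotients respects the equivalence relation defining $PresFin(k)$ and is compatible with composition and identities --- after which the conclusion is purely formal. One could alternatively deduce the statement from the bijection $\hom_{(Art/k)}(A_1,A_2)\simeq\hom_{PresFin(k)}(P_1,P_2)$ of the previous lemma, once one observes that this bijection respects composition, so that $PresFin(k)$ and $(Art/k)$ are in fact isomorphic as categories; but the direct argument above is shorter and does not require that strengthening.
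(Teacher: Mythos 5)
Your argument is correct and follows the same route as the paper's: take representatives $\phi,\psi$ of the isomorphism and its inverse, pass to the induced maps $\bar\phi,\bar\psi$ on quotients, and use the compatibility of ``induced map'' with composition and identities to conclude $\bar\phi$ and $\bar\psi$ are mutually inverse. You are slightly more explicit than the paper about well-definedness and about the maximal-ideal condition, but there is no substantive difference.
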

\begin{proof}
Suppose that $\widetilde{g} \circ \widetilde{f} = {\rm Id}_{P_1}$ and $\widetilde{f} \circ \widetilde{g} = {\rm Id}_{P_2}$ for morphisms $\widetilde{f}: P_1 \to P_2$ and $\widetilde{g}: P_2 \to P_1$. The maps $\widetilde{f}$ and $\widetilde{g}$ induce maps $f: A_1 \to A_2$ and $g: A_2 \to A_1$. That $\widetilde{g} \circ \widetilde{f} = {\rm Id}_{P_1}$ means $g \circ f = {\rm Id}_{A_1}$. Similarly we hae $f \circ g = {\rm Id} _{A_2}$. Thus $A_1 \simeq A_2$.\end{proof}

Thus, we have an equivalence of categories
\begin{eqnarray}\label{fully faiththful}\iota: (Art/k) \to PresFin(k)\end{eqnarray}
that sends an Artin local $k$-algebra $A$ to its presentation $P$, and a morphism $f: A_1 \to A_2$ of Artin local $k$-algebras to a morphism $\widetilde{f} : P_1 \to P_2$ defined as above.

\subsection{Additive higher Chow complex}Let $X$ be an irreducible quasi-projective variety over a field $k$, and let $p, q, n\geq 0 $ be integers. 
Let $\square= \bbP^1 - \{ 1 \}$, and let $P = \left(\bbX_e, J\right)$ be an object in $PresFin(k)$. Define an affine space
$$\Diamond_n (P) := \bbA^e \times \square^n.$$ Let $\widehat{\Diamond}_n (P) = \bbA^e \times \left( \bbP^1 \right)^n$. For an irreducible closed subvariety $Z \subset X \times \Diamond_n (P)$, let $\widehat{Z}$ be its Zariski closure in $X \times \widehat{\Diamond}_n (P)$ and let $\nu:\overline{Z} \to \widehat{Z}$ be its normalization. Let $\calI (P)$ be the sheaf of ideals of $\mathcal{O}_{\widehat{\Diamond}_n (P)}$ generated by $\mathbb{X}_e$. Let $V_0:= V (\calI(P))$ be the closed subvariety of $X \times \widehat{\Diamond}_n (P)$ associated to the ideal sheaf $\calI (P)$. For the Artin ring $A:= k[\bbX_e]/J$, we also denote then by $\calI (A)$ and $V (\calI (A))$.

For each $i \in \{ 1, \cdots, n\}$ and $j \in \{ 0, \infty\}$ we have codimension one ``nondegenerate'' faces \begin{eqnarray*}\tuborg F_i ^j : X \times \Diamond_{n-1} \hookrightarrow  X \times \Diamond_n  \\ (y, x, t_1, \cdots, t_{n-1})\mapsto  (y, x, t_1, \cdots, t_{i-1}, j, t_{i}, \cdots, t_{n-1}) \sluttuborg \end{eqnarray*} 
and the ``degenerate'' face
\begin{eqnarray*}V_0 \hookrightarrow X \times \Diamond_n .\end{eqnarray*}Various higher codimension faces are obtained by intersecting the above faces as well.

We associate to each object $P= \left(k[\bbX_d], J\right)$ a complex of abelian groups $\calZ^q (X \times \Diamond_* (P)) \in \kom^- (Ab)$ as follows:

\begin{definition}\label{additive Chow complex for Artin} Let $c_0 (X, n; P)$ be the free abelian group on the set of $0$-dimensional irreducible reduced closed points in $X \times \Diamond_n (P)$ not intersecting the faces. For $p \geq 1$, let $c_p (X, n ; P)$ be the free abelian group on the set of $p$-dimensional irreducible closed subvarieties $Z \subset X \times \Diamond_n (P)$ satisfying
\begin{enumerate}
\item $\widehat{Z}$ intersects all lower dimensional faces properly.
\item For each closed point $\mathfrak{p} \in \nu^* V_0$, there is an integer $1 \leq i \leq n$ such that
$$1-t_i \in (J) \cdot \calO_{\overline{Z}, p}.$$
\end{enumerate}

This second condition is called the \emph{modulus condition} in $t_i$. Let $\calZ_p (X, n; P)$ be the group $c_p (X, n ; P)$ modulo the subgroup of degenerate cycles, \emph{i.e.} those obtained by pulling back cycles on $X \times \Diamond_{n-1} (P)$ via various projections. The cycles in $\calZ_* (X, *; P)$ are said to be admissible.

For $i \in \{ 1, \cdots, n \}$ and $j \in \{ 0, \infty \}$, let $\partial _i : \calZ_p (X, n  ; P) \to \calZ_{p-1} (X, n-1 ; P)$ be the intersection product with the face $F_i ^j$. Let $\partial := \sum_{i=1} ^n (-1)^i \left(\partial _i ^0 - \partial _i ^{\infty} \right)$. It is easy to see that $\partial ^2 =0$. Thus, we have a complex
$$\cdots \overset{\partial}{\to} \calZ_{p+1} (X \times \Diamond_{n+1} (P)) \overset{\partial}{\to} \calZ_p (X \times \Diamond_n (P)) \overset{\partial}{\to} \calZ_{p-1} (X \times \Diamond_{n-1}( P)) \overset{\partial}{\to} \cdots$$ called the additive Chow complex associated to $P$. If $p + q = \dim X + e + n$, then we also write $\calZ^q (X \times \Diamond_n (P))$ in terms of codimensions. 
\end{definition}

\begin{definition} The homology group at $\calZ_p (X, n ; P)$, denoted by $ACH_p (X, n; P)$, is called the \emph{additive Chow group associated to $P$.} For an Artin local $k$-algebra in $(Art/k)$, choose a presentation $P$ for $A$, and define $ACH_p (X, n; A):= ACH_p (X, n ; P)$. If $p + q = \dim X + e + n$, then we define $ACH^q (X, n; P) = ACH_p (X, n;P)$. \end{definition}
\begin{lemma}Suppose we have two isomorphic objects $P_1 \simeq P_2$ in $PresFin(k)$. Then, the complex $\left( \calZ^q (X \times \Diamond_*; P_1), \partial \right)$ is isomorphic to $\left( \calZ^q (X \times \Diamond_*; P_2) , \partial \right)$.
\end{lemma}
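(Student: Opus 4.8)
The plan is to build the isomorphism directly from the defining data of $P_1\simeq P_2$. Choose $k$-algebra homomorphisms $\phi\colon k[\bbX_{e_1}]\to k[\bbX_{e_2}]$ and $\psi\colon k[\bbX_{e_2}]\to k[\bbX_{e_1}]$ representing the two mutually inverse morphisms $\widetilde f,\widetilde g$ of $PresFin(k)$, so that $\phi(J_1)\subseteq J_2$, $\psi(J_2)\subseteq J_1$, and the composites $\psi\phi$, $\phi\psi$ represent the identities of $P_1$, $P_2$ (equivalently, agree with the identity modulo $J_1$, resp.\ modulo $J_2$). Since a morphism of $(Art/k)$ is a local homomorphism, $\phi$ and $\psi$ carry the coordinate ideals into the coordinate ideals; hence they induce morphisms of affine spaces fixing the origin, and, taking the identity on $X$ and on the cube (resp.\ $(\bbP^1)^n$) factors, morphisms $\Phi_n\colon X\times\Diamond_n(P_2)\to X\times\Diamond_n(P_1)$ with completions $\widehat{\Phi}_n$, and likewise $\Psi_n,\widehat{\Psi}_n$, for every $n$, compatible with the face inclusions $F_i^j$ and with the degeneracy projections.

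The first real step is to check that transporting cycles along $\Phi_n$ (and along $\Psi_n$) sends admissible cycles to admissible cycles and is compatible with $\partial$. Compatibility with $\partial$ and preservation of (non)degeneracy are formal, because $\Phi_n$ is the identity on the cube factors, so it commutes with each face inclusion and with the projections; proper intersection with the nondegenerate faces is inherited for the same reason. The substantive point is the modulus condition. For $Z$ admissible for $P_1$, a component $W$ of $\widehat{\Phi}_n^{-1}(\widehat Z)$, and a point $\mathfrak p$ of $\overline W$ over $V(\calI(P_2))$, one first notes $\widehat{\Phi}_n^{*}\calI(P_1)\subseteq\calI(P_2)$ — because $\phi((\bbX_{e_1}))\subseteq(\bbX_{e_2})$ — so the image of $\mathfrak p$ lies over $V(\calI(P_1))$; admissibility of $Z$ then supplies an $i$ with $1-t_i\in(J_1)\cdot\calO_{\overline Z}$ locally there, and pushing this relation forward to $\calO_{\overline W,\mathfrak p}$ and invoking $\phi(J_1)\subseteq J_2$ yields $1-t_i\in(J_2)\cdot\calO_{\overline W,\mathfrak p}$, which is exactly the modulus condition for $W$. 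Thus $\Phi_n$ and $\Psi_n$ induce morphisms of complexes $\calZ^q(X\times\Diamond_*(P_1))\rightleftarrows\calZ^q(X\times\Diamond_*(P_2))$ preserving $q$.

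It remains to see these two morphisms of complexes are mutually inverse. Their composite is induced by the ring endomorphism $\psi\phi$ (resp.\ $\phi\psi$), which represents the identity of $P_1$ (resp.\ $P_2$), so one is reduced to the assertion that an endomorphism of a presentation representing the identity of $PresFin(k)$ induces the identity on the cycle complex. This is the step I expect to be the main obstacle, since such an endomorphism agrees with the identity only modulo the ideal $J$ (which satisfies merely $\mathfrak m^N\subseteq J$), so the induced self-map of $\bbA^e$ need not be an isomorphism at all. To get around it I would argue that the portion of an admissible cycle constrained by the modulus condition lives infinitesimally near $V(\calI)$, where the self-map is the identity, while on the complement of $V(\calI)$ one can arrange — replacing the representatives, and if necessary enlarging the presentation so that the ambient affine space is big enough for the isomorphism to become a linear coordinate change — that the self-map is a genuine automorphism fixing every cycle. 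Combining the two regimes gives that the composite is the identity on $\calZ^q$, and symmetrically for the other composite, which completes the proof.
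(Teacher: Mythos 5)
Your first two paragraphs are sound and align with what the paper itself does in the functoriality subsection: a representative $\phi$ of the morphism induces $\phi_*$ by pullback along $\phi^{\#}$, the modulus condition propagates because $\phi(J_1)\subseteq J_2$, and compatibility with $\partial$ is formal. You are also right to single out the last step as the real issue: a polynomial endomorphism $\phi$ of $k[\bbX_e]$ representing ${\rm Id}_P$ in $PresFin(k)$ is only required to agree with the identity modulo $J$, and the induced $\phi^{\#}\colon\bbA^e\to\bbA^e$ need not be an automorphism. For instance with $P=(k[x],(x^m))$ and $\phi(x)=x+x^m$ one has $\phi\sim{\rm Id}$, yet $\phi^{\#}(t)=t+t^m$ is a degree-$m$ self-map of $\bbA^1$.

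However, the repair you sketch does not work. It is not true that $\phi^{\#}$ ``is the identity near $V(\calI)$'': it agrees with the identity only on the closed subscheme $\spec(A)\hookrightarrow\bbA^e$, not on any Zariski-open neighbourhood of the origin, and the modulus condition is a condition on local rings of $\overline{Z}$ at points over the origin, not a statement that $Z$ is concentrated there. Nor can one ``arrange'' a representative of ${\rm Id}_P$ to be an automorphism by enlarging the presentation: the representative $\psi\phi$ is forced on you by the chosen $\phi$ and $\psi$, and the example above shows concretely that $\phi_*$ is \emph{not} the identity on the chain level, since the pullback of an admissible $0$-cycle $\{(a,b_1,\dots,b_n)\}$ with $a\ne 0$ along $\phi^{\#}\times{\rm Id}$ is a sum of $m$ distinct points. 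So your ``combining the two regimes'' step is a genuine gap, not a routine verification.

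For what it is worth, the paper's own proof of this lemma is the single word ``Obvious,'' and the later functoriality lemmas likewise never check that $f_*$ is independent of the chosen lift $\widetilde{f}$, which is the same issue. So you have located a real soft spot in the source, not just in your own write-up. The honest fixes I can see are either to restrict the statement to presentations related by an actual polynomial automorphism of $k[\bbX_e]$ (which covers the cases the paper uses, e.g.\ $A=k[x]/(x^m)$, where the minimal presentation is essentially unique), or to prove that $\phi_*$ for $\phi\sim{\rm Id}$ is chain-homotopic to the identity rather than equal to it — a quasi-isomorphism is all that the subsequent definition of $ACH$ actually requires. Claiming a literal isomorphism of complexes via $\phi_*$ and $\psi_*$, as both you and the paper implicitly do, is not correct as stated.
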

\begin{proof} Obvious.\end{proof}

\begin{lemma} When $A= k$, the group $\calZ_p (X \times \Diamond_n (k)) = 0$. In particular, its homology group $ACH_p (X, n ; k) = 0$.
\end{lemma}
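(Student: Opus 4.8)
The plan is to unravel the definitions in the degenerate case $A = k$, where no ``additive direction'' is present. Since $\mathfrak m_k = 0$ we have $\edim(k) = 0$, so Lemma \ref{presentation} gives a presentation $P$ of $k$ with $e = 0$, and since $J \subset \mathfrak m^2 = (0)$ we get $J = 0$ as well. Thus $\bbX_0 = \emptyset$, there is no $\bbA^e$ factor, $\Diamond_n(P) = \square^n$, and $\widehat\Diamond_n(P) = (\bbP^1)^n$. The observation that drives everything is that $\calI(P)$, being the ideal sheaf generated by the empty set of functions $\bbX_0$, is the zero ideal sheaf, so the ``degenerate face'' $V_0 = V(\calI(P))$ is the \emph{entire} ambient variety $X \times \widehat\Diamond_n(P)$ rather than a proper closed subvariety.

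From this I would deduce that there are no admissible cycle generators at all. When $p = 0$, a generator of $c_0(X, n; k)$ is a closed point of $X \times \Diamond_n(P)$ avoiding the faces, but every point of $X \times \Diamond_n(P)$ lies on $V_0 = X \times \widehat\Diamond_n(P)$, so there is none. When $p \ge 1$, let $Z \subset X \times \square^n$ be irreducible of dimension $p$, with closure $\widehat Z$ in $X \times (\bbP^1)^n$ and normalization $\nu \colon \overline Z \to \widehat Z$; since $V_0$ is everything, $\nu^* V_0 = \overline Z$. The modulus condition of Definition \ref{additive Chow complex for Artin}(2), applied with $J = 0$ so that $(J)\cdot \calO_{\overline Z, \mathfrak p} = 0$, then requires that for \emph{every} closed point $\mathfrak p \in \overline Z$ there be an index $i$ with $1 - t_i = 0$ in the local ring $\calO_{\overline Z, \mathfrak p}$.

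The point is that this is impossible once $Z \ne \emptyset$. I would pick a closed point $\mathfrak p \in \overline Z$ lying over a closed point $z = \nu(\mathfrak p) \in Z$, which exists because $\nu$ is surjective and $Z$ is dense in $\widehat Z$. For each $i$ the $t_i$-coordinate of $z$ lies in $\square = \bbP^1 \setminus \{1\}$, hence $t_i(z) \ne 1$; so either $t_i(z) \in \bbA^1 \setminus \{1\}$ and $1 - t_i$ is a unit of $\calO_{\overline Z, \mathfrak p}$, or $t_i(z) = \infty$ and $1 - t_i$ is not a regular function at $\mathfrak p$ at all. Either way $1 - t_i$ is not the zero element of $\calO_{\overline Z, \mathfrak p}$, so condition (2) fails for every $i$ and $Z$ is not admissible. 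Hence $c_p(X, n; k) = 0$ for all $p$ and $n$, so $\calZ_p(X \times \Diamond_n(k)) = 0$ and therefore $ACH_p(X, n; k) = 0$.

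I do not expect a real obstacle here. The only step that deserves a moment's thought is the one flagged above: realizing that when the embedding dimension drops to $0$ the face $V_0$ degenerates to the whole ambient space, so that the modulus condition is imposed at every point of every candidate cycle and, because the defining ideal $J$ is trivial, is incompatible with the cycle lying inside $\square^n$.
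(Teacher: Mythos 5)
Your proposal is correct and follows essentially the same route as the paper's own proof: observe that $\edim(k)=0$ forces $J=0$ and $V_0 = V(\calI(P))$ to be the entire ambient variety, so $\nu^*V_0 = \overline{Z}$, and then the modulus condition $1-t_i \in (0)\cdot\calO_{\overline{Z},\mathfrak p}$ fails at any $\mathfrak p$ lying over a point of $Z$ because $t_i \ne 1$ on $\square$. You spell out the $p=0$ case and the $t_i=\infty$ subtlety a bit more explicitly than the paper does, but there is no substantive difference in the argument.
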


\begin{proof}The ring $k$ has its embedding dimension $0$, thus its corresponding presentation is $P= (k, 0)$ with $\mathfrak{m}_k =  0$. Thus, the set $V_0$ is the whole $X \times \Diamond_n (k) = X \times \square^n$. Hence if $Z \subset X \times \Diamond_n (k)$ is an admissible irreducible closed subvariety with the normalization $\nu: \overline{Z} \to \widehat{Z}$, then $\nu^* V_0 = \overline{Z}$. But, the modulus condition requires that for all $\mathfrak{p} \in \overline{Z} = \nu^* V_0$ we must have $1- t_i \in ( 0) \cdot \calO_{\overline{Z}, \mathfrak{p}}$, \emph{i.e.} $1 = t_i$ at $mathfrak{p}$. Hence, we must have $\nu (\mathfrak{p}) \in \widehat{Z} \backslash Z$ for all $\mathfrak{p}$. But this is impossible unless $Z= \emptyset$. Thus $\calZ_p (X, n; k)= 0$ so that $ACH_p (X, n; k) = 0$.\end{proof}

\subsection{Functoriality}
In this section, for two objects $P_1, P_2$ of $PresFin(k)$ and a morphism $P_1 \to P_2$, we define a homomorphism $\calZ^q (X \times \Diamond_n (P_1)) \to \calZ^q (X\times \Diamond_ n (P_2))$. Then we show that it gives a functor
$$\calZ^q (X \times \Diamond_{*} (-)) : (Art/k) \to \kom^- (Ab).$$

 Let $A_1, A_2$ be two Artin local $k$-algebras in $(Art/k)$ with embedding dimensions $e_1, e_2$, respectively. We define $\calZ^q (X \times \Diamond_n (f))$ for each $f: A_1 \to A_2$. It will be denoted by $f_*$ when no confusion arises. There corresponds  a $k$-algebra homomorphism $\widetilde{f}: k[\bbX_{e_1}] \to k[\bbX_{e_2}]$ on the level of their minimal presentations. It induces a morphism $f^{\#}: X \times \bbA^{e_2} \times \square ^n \to X \times \bbA^{e_1} \times \square^n$. For an irreducible closed subvariety $Z \subset X \times \bbA^{e_1} \times \square^n$, consider the fibre square
 $$\xymatrix{ Z \times _{X \times \Diamond_n (A_1)} X \times \Diamond_n (A_2) \ar[r] \ar[d] & X \times \Diamond_n (A_2) \ar[d] ^{f ^\#} \\
 Z \ar[r] &X \times \Diamond_n (A_1) }$$ and define $f_* (Z) := \calZ^q (X \times \Diamond_n (f)) (Z) := [ Z \times_{X \times \Diamond_n (A_1) }X \times \Diamond_n (A_2)]$, where $[ \ ]$ means the associated cycle.

\begin{lemma}For $f: A_1 \to A_2$ and an irreducible reduced admissible closed subvariety $Z \in \calZ^q (X \times \Diamond_n (A_1))$, the cycle $f_* (Z)$ is admissible, thus belongs to $ \calZ^q (X \times \Diamond_n (A_2))$.
\end{lemma}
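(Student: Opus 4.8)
The statement to prove is that for a morphism $f\colon A_1 \to A_2$ in $(Art/k)$ and an admissible irreducible reduced cycle $Z \in \calZ^q(X\times\Diamond_n(A_1))$, the pullback cycle $f_*(Z) = [Z\times_{X\times\Diamond_n(A_1)} X\times\Diamond_n(A_2)]$ is again admissible. I would break "admissible" into its three constituent requirements from Definition \ref{additive Chow complex for Artin}: (a) the Zariski closure of each component of $f_*(Z)$ meets all lower-dimensional faces properly; (b) the modulus condition holds along the degenerate face; and (c) $f_*(Z)$ is not a degenerate cycle (or, more carefully, its class in $\calZ^q$ is well-defined). Actually the last point is automatic once we work modulo degenerate cycles, so the real content is (a) and (b).

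For (a), I would use the explicit description of $f^\#\colon X\times\bbA^{e_2}\times\square^n \to X\times\bbA^{e_1}\times\square^n$: it is the identity on $X$ and on the $\square^n$-factors, and on the affine factors it is the morphism $\bbA^{e_2}\to\bbA^{e_1}$ dual to $\widetilde f\colon k[\bbX_{e_1}]\to k[\bbX_{e_2}]$. Since $\widetilde f(\mathfrak m_{e_1}) \subset \mathfrak m_{e_2}$ (this follows from $f$ being a morphism in $(Art/k)$, i.e. $f(\mathfrak m_{A_1})=\mathfrak m_{A_2}$, combined with $J_i\subset\mathfrak m^2$), the map $f^\#$ is compatible with the face structure: a nondegenerate face $F_i^j$ of $\Diamond_n(A_2)$ maps into $F_i^j$ of $\Diamond_n(A_1)$, and similarly for higher-codimension faces. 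The key geometric input is that $f^\#$ is flat — indeed $k[\bbX_{e_1}]\to k[\bbX_{e_2}]$ need not be flat in general, so here I expect the first real subtlety. One route: reduce to the case where $\widetilde f$ is, up to a coordinate change, either an inclusion $k[\bbX_{e_1}]\hookrightarrow k[\bbX_{e_1},\bbX_{e_2-e_1}]$ (flat) composed with a surjection; but surjections are not flat. The cleaner approach is probably a dimension count directly on the fibre product: since $f^\#$ restricted to the relevant faces has fibres of the expected dimension (this is where one uses properness of the faces and the compatibility above), the components of $\widehat{f_*(Z)}$ meet each face in the expected codimension. I would phrase this as: proper intersection is preserved under pullback along a morphism provided the fibre product has the expected dimension everywhere, and check that expected-dimensionality using $\widehat Z$ meeting faces properly over $A_1$.

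For (b), the modulus condition, I would argue as follows. Let $W$ be a component of $f_*(Z)$ with normalization $\nu_W\colon \overline W \to \widehat W$, and let $\mathfrak p \in \nu_W^* V_0(A_2)$. The morphism $f^\#$ sends $V_0(A_2)$ into $V_0(A_1)$ because $\widetilde f(\mathfrak m_{e_1})\subset \mathfrak m_{e_2}$ means the ideal $\calI(A_1)$ pulls back into $\calI(A_2)$. Hence the image of $\mathfrak p$ lands over a point $\mathfrak p'$ of $\overline Z$ lying in $\nu_Z^* V_0(A_1)$, and by hypothesis there is an $i$ with $1-t_i \in (J_1)\cdot\calO_{\overline Z,\mathfrak p'}$. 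Since $\widetilde f(J_1)\subset J_2$ and the $t_i$-coordinates are preserved by $f^\#$, pulling this membership back through the induced map $\calO_{\overline Z,\mathfrak p'}\to\calO_{\overline W,\mathfrak p}$ gives $1-t_i \in (J_2)\cdot\calO_{\overline W,\mathfrak p}$, which is exactly the modulus condition for $f_*(Z)$. The one point requiring care is that the normalization $\overline W$ is not literally the pullback of $\overline Z$, so I would use the universal property of normalization to produce the comparison map $\overline W \to \overline Z$ (or rather a map from an open dense subset, which suffices to check the generic membership and then spread it out), and check that the $t_i$-functions and the ideal $(J_1)$ transform correctly under it. I expect step (a) — verifying proper intersection via the dimension count on the fibre product — to be the main obstacle, since $f^\#$ is genuinely not flat and one must instead exploit the face-compatibility of $f^\#$ together with the properness hypothesis over $A_1$ to control fibre dimensions.
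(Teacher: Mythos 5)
Your argument for the modulus condition tracks the paper's proof almost exactly: construct the comparison map $\phi\colon\overline W\to\overline Z$ from the universal property of normalization, observe that $f^\#$ carries $V(\calI(P_2))$ into $V(\calI(P_1))$, pull back the membership $1-t_i\in (J_1)\cdot\calO_{\overline Z, \phi(\mathfrak p)}$ through the induced local ring map $\calO_{\overline Z,\phi(\mathfrak p)}\to\calO_{\overline W,\mathfrak p}$, and finish with $\widetilde f(J_1)\subset J_2$. You are careful, as the paper is, that $\overline W$ is not the literal pullback of $\overline Z$, and you resolve this the same way. No gap in that half.

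For proper intersection the two proofs part ways, and your plan is the one left unfinished. You expect non-flatness of $f^\#$ to force a delicate dimension count on the fibre product and flag it as the main obstacle, but you never carry it out. The paper instead leans on a structural fact absent from your plan: morphisms in $(Art/k)$ are required to satisfy $f(\mathfrak m_{A_1})=\mathfrak m_{A_2}$ with \emph{equality}, which forces $f$ itself to be surjective (any $b\in A_2$ decomposes as $c+m$ with $c\in k$, $m\in\mathfrak m_{A_2}=f(\mathfrak m_{A_1})$). From this the paper asserts that $f^\#$ is a closed immersion whose image is ``at worst one of the faces,'' so proper intersection is inherited directly from the admissibility of $Z$; no flatness, no fibre-dimension estimate. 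The surjectivity of $(Art/k)$-morphisms is the one ingredient you missed, and it is what collapses step (a) to a sentence. One caveat in your favor: surjectivity of $f$ does not automatically make the chosen lift $\widetilde f\colon k[\bbX_{e_1}]\to k[\bbX_{e_2}]$ a ring surjection --- for $A_1\simeq A_2=k[y]/(y^3)$ with $f(x)= y+y^2$, every lift sends $x$ to a polynomial of degree at least two, so no lift is surjective and $f^\#$ is not a closed immersion --- so the paper's ``obvious'' is terser than it should be, and your instinct that step (a) deserves scrutiny is sound. But the scrutiny should have started from the forced surjectivity of $f$, not from a flatness hypothesis that neither proof actually uses.
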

\begin{proof} Choose presentations $P_i = (k [\mathbb{X}_{e_i}], J_i)$ for $A_i$, $i =1 , 2$. Let $W \subset Z \times_{X \times \Diamond_n (A_1)} X \times \Diamond_n (A_2)$ be an irreducible reduced component. Consider a commutative diagram
$$\xymatrix{ \overline{W} \ar[r]^{\nu_1} \ar[d] ^{\phi} & \widehat{W} \ar[d] \ar[r] & X \times \widehat{\Diamond}_n (A_2) \ar[d] ^{f ^{\#}} \\ \overline{Z} \ar[r] ^{\nu} & \widehat{Z} \ar[r] & X \times \widehat{\Diamond}_n (A_1)}$$ where $\widehat{W}$ and $\widehat{Z}$ are the Zariski closures and $\nu_1$ and $\nu$ are normalization maps. The morphism $\phi$ is given by the universal property of the normalization $\nu$. 

To check the modulus condition, let $\mathfrak{p} \in \nu_1 ^* V (\calI (P_2))$ be a closed point. Then, 
\begin{eqnarray*}
\phi (\mathfrak{p}) \in \phi (\nu_1 ^* V (\calI (P_2)) &=& \nu ^* (f ^{\#} (V (\calI (P_2)))) \\
&=& \nu^* V ( \calI (P_1) ).
\end{eqnarray*} Thus, by the modulus condition at $\phi (\mathfrak{p})$, there exists $1 \leq i \leq n$ such that $$1 - t_i \in (J_1) \cdot \mathcal{O}_{ \overline{Z}, \phi (\mathfrak{p}) }.$$ Since we have a natural map $\mathcal{O}_{\overline{Z}, \phi (\mathfrak{p})} \to  \mathcal{O}_{\overline{W}, \mathfrak{p}}$ for which the ideal $J_1$ is mapped into $J_2$, we have a natural map $(J_1) \cdot \calO_{\overline{Z}, \phi (\mathfrak{p})} \to (J_2) \cdot \calO_{\overline{W}, \mathfrak{p}}$. Thus, we have $1- t_i \in (J_2) \cdot \calO_{\overline{W}, \mathfrak{p}}$ as desired. This proves the modulus condition.

Proper intersection with faces is obvious since the map $f^{\#}$ is a closed immersion whose image is at  worst one of the faces. Hence $f_*$ sends admissible cycles to admissible cycles. \end{proof}

Notice that in terms of dimensions, where $p + q = \dim X + e_1 +n$, the induced map $f_*$ maps $\calZ_p (X \times \Diamond_n (A_1))$ into $\calZ_{p + (e_2 - e_1)} (X \times \Diamond_n (A_2))$.

\begin{lemma}For $n \geq 0$, the maps $f_*$ are compatible with the boundary maps. In other words, $f_*: \calZ^q (X \times \Diamond_* (A_1)) \to \calZ^q (X \times \Diamond_* (A_2))$ is a morphism in $\kom^- (Ab)$.
\end{lemma}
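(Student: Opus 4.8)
The plan is to verify directly that $f_* \circ \partial = \partial \circ f_*$ on each generator. Since both $f_*$ and $\partial$ are defined by pullback/intersection with subvarieties that are closed immersions of coordinate-type subvarieties, this is essentially a question of the compatibility of flat (or proper) pullback with intersection with a divisor. First I would reduce to a single irreducible admissible $Z \subset X \times \Diamond_n(A_1)$ and a single face $F_i^j$. The morphism $f^{\#}: X \times \bbA^{e_2} \times \square^n \to X \times \bbA^{e_1} \times \square^n$ is the identity on the $X$-factor and on the $\square^n$-factor, and is induced by $\widetilde f: k[\bbX_{e_1}] \to k[\bbX_{e_2}]$ on the affine factors; in particular it commutes with the face inclusions $F_i^j$ in the obvious strict sense, i.e. there is a cartesian square relating $F_i^j$ upstairs and downstairs, with the vertical maps being $f^{\#}$ in dimension $n$ and $n-1$.

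The key step is then a base-change identity for cycles: given the fibre square defining $f_*$ and the further fibre square over a face, the cycle $\partial_i^j f_*(Z)$ and the cycle $f_*(\partial_i^j Z)$ agree. I would argue this by noting that $f^{\#}$ is a closed immersion onto (a product with) a linear subspace, so pullback along it is the refined Gysin pullback for a regular embedding, and such pullbacks commute with each other when the relevant subschemes meet properly — which is guaranteed by the admissibility condition (1) that $\widehat Z$ meets all faces properly, a condition preserved under $f_*$ as shown in the previous lemma. Concretely: forming $Z \times_{X\times\Diamond_n(A_1)} X\times\Diamond_n(A_2)$ and then intersecting with the face upstairs is the same scheme as intersecting $Z$ with the face downstairs and then taking the fibre product, because fibre products commute; the only content is that passing to associated cycles commutes with this, which follows from the proper-intersection hypotheses (no excess-dimension components, correct multiplicities by the projection formula / compatibility of Gysin maps).

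After establishing $\partial_i^j f_* = f_* \partial_i^j$ for each $i,j$, I would assemble: $\partial = \sum_{i=1}^n (-1)^i(\partial_i^0 - \partial_i^{\infty})$ has the same index set upstairs and downstairs (the number of $\square$-coordinates $n$ is unchanged by $f$), so summing the individual identities with matching signs gives $f_* \partial = \partial f_*$ on $c_p$. Then I would check the identity descends modulo degenerate cycles: $f^{\#}$ commutes with the projections $X\times\Diamond_n \to X\times\Diamond_{n-1}$ forgetting a $\square$-coordinate (again because $f^{\#}$ only touches the $\bbA^e$ factor), so $f_*$ sends degenerate cycles to degenerate cycles, and the induced map on $\calZ_p$ is well-defined and still commutes with $\partial$. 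This is the content needed to conclude that $f_*$ is a morphism in $\kom^-(Ab)$.

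The main obstacle is the cycle-theoretic base-change claim, i.e. that $[\,\cdot\,]$ (the associated-cycle operation) really commutes with the two pullbacks, including multiplicities, rather than just the underlying sets being equal. The hypotheses that make this work — proper intersection of $\widehat Z$ with all faces, and the fact that $f^{\#}$ is a closed immersion onto a linearly embedded subvariety whose image meets those faces in at worst lower faces — are all available from the admissibility conditions in Definition \ref{additive Chow complex for Artin} and from the preceding lemma, so the argument is routine modulo invoking the standard commutativity of refined Gysin homomorphisms (e.g. from intersection theory); I would cite that and not reprove it. The bookkeeping of which component of the fibre product dominates which face, and checking there are no unexpected components of wrong dimension, is the only place where a little care is genuinely needed.
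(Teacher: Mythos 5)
The paper's own proof consists of the single word ``Obvious,'' so what you have supplied is an expansion rather than a departure. Your expansion takes exactly the route the paper implicitly intends: reduce to a single irreducible $Z$ and a single face $F_i^j$, observe that $f^{\#}$ touches only the $\bbA^e$ factor and hence sits in a cartesian square with the face inclusions, and then invoke compatibility of pullback and refined intersection on cycle level (using the proper-intersection clause of admissibility, which the preceding lemma shows is inherited by $f_*Z$), followed by the check that degenerate cycles are preserved. That is the correct skeleton.

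One small imprecision: $f^{\#}$ is indeed a closed immersion (because morphisms in $(Art/k)$ are required to satisfy $f(\mathfrak{m}_{A_1}) = \mathfrak{m}_{A_2}$, forcing the linear parts of $\widetilde f(x_1),\dots,\widetilde f(x_{e_1})$ to span $\mathfrak{m}/\mathfrak{m}^2$ in $k[\bbX_{e_2}]$, so after a linear change of variables $f^{\#}$ is the graph of a polynomial map $\bbA^{e_2}\to\bbA^{e_1-e_2}$), but its image is in general a nonlinear graph, not ``a product with a linear subspace.'' This does not affect your argument — the cartesian square with the faces and the commutativity of Gysin pullbacks hold for any closed immersion of smooth varieties transverse enough to the faces, and the faces only involve the $\square^n$-coordinates which $f^{\#}$ leaves fixed — but you should not lean on linearity. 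With that caveat, your proposal is a correct and appropriately careful rendering of what the paper leaves to the reader.
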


\begin{proof}
Obvious.\end{proof}

The last property is the functoriality.

\begin{lemma}Consider two morphisms $A_1 \overset{f_1}{\to} A_2 \overset{f_2}{\to} A_3$ in $(Art/k)$. Then, the induced morphisms of complexes satisfy
$$(f_2 \circ f_1)_* = {f_2}_* \circ {f_1}_*.$$ In other words, we have a covariant functor
$$\calZ^q (X \times \Diamond_{*} ( - )): (Art/k) \to \kom^- (Ab).$$
\end{lemma}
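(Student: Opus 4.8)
The plan is to verify functoriality cycle-by-cycle, reducing everything to the behavior of the associated fibre squares under composition. Fix presentations $P_i = (k[\bbX_{e_i}], J_i)$ for $A_i$ with $i = 1, 2, 3$, which by the earlier lemmas are essentially unique and transform compatibly under morphisms. Each $f_j$ determines $\widetilde{f_j}: k[\bbX_{e_j}] \to k[\bbX_{e_{j+1}}]$ and hence a closed immersion $f_j^{\#}: X \times \Diamond_n(A_{j+1}) \to X \times \Diamond_n(A_j)$ on the level of schemes; the functoriality $\widetilde{(f_2 \circ f_1)} \sim \widetilde{f_2} \circ \widetilde{f_1}$ in $PresFin(k)$ (proved above) immediately gives $(f_2 \circ f_1)^{\#} = f_1^{\#} \circ f_2^{\#}$ as morphisms of varieties, since the relevant maps only depend on the induced $k$-algebra map $\bbX_{e_1} \to k[\bbX_{e_3}]$ up to the equivalence in $PresFin(k)$.

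Next I would reduce to a statement about cycles associated to fibre products. For an irreducible admissible $Z \subset X \times \Diamond_n(A_1)$, the definition gives ${f_1}_*(Z) = [Z \times_{X \times \Diamond_n(A_1)} X \times \Diamond_n(A_2)]$, and then ${f_2}_*({f_1}_*(Z))$ is obtained by decomposing this into irreducible components, pulling each back along $f_2^{\#}$, and taking associated cycles. The key algebraic fact is the standard transitivity of base change:
\begin{eqnarray*}
\left( Z \times_{X \times \Diamond_n(A_1)} X \times \Diamond_n(A_2) \right) \times_{X \times \Diamond_n(A_2)} X \times \Diamond_n(A_3) \simeq Z \times_{X \times \Diamond_n(A_1)} X \times \Diamond_n(A_3),
\end{eqnarray*}
which follows from $(f_2 \circ f_1)^{\#} = f_1^{\#} \circ f_2^{\#}$ together with associativity of fibre products. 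What remains is to check that forming the associated cycle is compatible with this identification — i.e. that computing intersection multiplicities in two stages (first along $f_1^{\#}$, then along $f_2^{\#}$) yields the same coefficients as computing them in one stage along $(f_2 \circ f_1)^{\#}$. Since all the maps $f_j^{\#}$ are closed immersions of the affine-space factors (landing in faces at worst), this is the associativity/transitivity of intersection multiplicities under iterated regular embeddings, a well-known property; alternatively, since $Z$ and its pullbacks are admissible, one may argue generically on each component where the multiplicities are visibly multiplicative. One should also record that the previous lemma guarantees each intermediate cycle is again admissible, so all the cycle-theoretic operations are legitimate, and the additivity over components of $[\ ]$ lets one extend from irreducible $Z$ to all of $\calZ^q(X \times \Diamond_n(A_1))$ by linearity.

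Finally, having established $(f_2 \circ f_1)_* = {f_2}_* \circ {f_1}_*$ on each $\calZ^q(X \times \Diamond_n(A_1))$, and knowing from the preceding lemma that each $f_*$ commutes with $\partial$, one concludes that $\calZ^q(X \times \Diamond_*(-))$ is a well-defined covariant functor $(Art/k) \to \kom^-(Ab)$; that identities go to identities is clear since $\mathrm{Id}_A$ induces $\mathrm{Id}^{\#}$ on varieties and the fibre product of $Z$ with the identity returns $Z$ itself.

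I expect the main obstacle to be the last technical point: showing that intersection multiplicities behave transitively, so that the scheme-theoretic identification of iterated fibre products actually upgrades to an equality of cycles with the correct coefficients. On the nose this is a standard fact about pullbacks of cycles along composable morphisms (projection formula / transitivity of Gysin maps), but it does require that the morphisms $f_j^{\#}$ be sufficiently nice — which they are, being closed immersions of $\bbA^{e_{j+1}} \hookrightarrow \bbA^{e_j}$ extended by the identity on $X \times \square^n$ — so in practice the proof is short, and indeed the paper simply records it as ``Obvious.''
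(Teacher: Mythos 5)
Your proof takes essentially the same route as the paper, whose entire argument is the single sentence ``The transitivity of the fibre products implies the functoriality.'' You simply unpack that phrase: the reduction to a single irreducible $Z$, the identification $(f_2 \circ f_1)^{\#} = f_1^{\#} \circ f_2^{\#}$ via the $PresFin(k)$ correspondence, the iterated fibre square, and the observation that identities behave correctly. The one place where you go beyond the paper is in explicitly flagging that the scheme-theoretic isomorphism of iterated fibre products must be promoted to an equality of \emph{cycles} with the correct multiplicities, and in invoking transitivity of the associated-cycle construction (equivalently, of Gysin pullback) for the maps $f_j^{\#}$. The paper leaves this entirely implicit. This is a legitimate and correct worry to raise; since the relevant morphisms are the maps of affine-space factors induced by $\widetilde{f_j}$ (extended by the identity on $X \times \square^n$), the pullbacks are well-behaved and the multiplicities do compose, so the argument goes through. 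One small slip: the paper's proof text for this lemma is ``The transitivity of the fibre products implies the functoriality,'' not ``Obvious'' — but that does not affect your reasoning.
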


\begin{proof}The transitivity of the fibre products implies the functoriality.\end{proof}

Thus, by composing the above functor with the homology functor $H_n: \kom^- (Ab) \to (Ab)$, we obtain the following corollary:

\begin{corollary}We have the additive higher Chow functor
$$ACH^q (X, n ; - ) : (Art/k) \to (Ab).$$
\end{corollary}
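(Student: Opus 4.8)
The plan is to build the functor $ACH^q(X,n;-)$ by composition of two functors already established in the excerpt. We have the covariant functor $\calZ^q(X \times \Diamond_*(-)) : (Art/k) \to \kom^-(Ab)$, and for each fixed $n$ we have the homology functor $H_n : \kom^-(Ab) \to (Ab)$. The composition $H_n \circ \calZ^q(X \times \Diamond_*(-))$ is then a covariant functor $(Art/k) \to (Ab)$, and by definition of $ACH^q(X,n;P)$ as the homology of $\calZ^q(X \times \Diamond_*(P))$ at degree $n$, this composite sends $A$ to $ACH^q(X,n;A)$. So the corollary is essentially immediate once one has the preceding lemma establishing that $\calZ^q(X \times \Diamond_*(-))$ is indeed a functor into the category of complexes; the content has all been done.

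In slightly more detail, first I would fix $q$ and $n$, choose for each $A \in (Art/k)$ a minimal presentation $P_A$ via Lemma \ref{presentation}, and recall that $ACH^q(X,n;A)$ was \emph{defined} to be $ACH^q(X,n;P_A) = H_n(\calZ^q(X \times \Diamond_*(P_A)))$ (after the dimension/codimension bookkeeping $p+q = \dim X + e + n$). Next, given a morphism $f : A_1 \to A_2$ in $(Art/k)$, the earlier construction produces $f_* : \calZ^q(X \times \Diamond_*(A_1)) \to \calZ^q(X \times \Diamond_*(A_2))$ as a morphism in $\kom^-(Ab)$; applying $H_n$ yields a homomorphism of abelian groups $ACH^q(X,n;A_1) \to ACH^q(X,n;A_2)$. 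Functoriality — that identities go to identities and $(f_2 \circ f_1)_* = (f_2)_* \circ (f_1)_*$ on homology — follows from the corresponding statement at the level of complexes (the transitivity-of-fibre-products lemma) together with the functoriality of $H_n$.

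One small point worth addressing explicitly is the dependence on the choice of presentation: a priori $ACH^q(X,n;A)$ depends on $P_A$, but by the lemma stating that isomorphic objects of $PresFin(k)$ give isomorphic complexes, and by the equivalence of categories $\iota : (Art/k) \to PresFin(k)$ of \eqref{fully faiththful}, any two minimal presentations of $A$ are isomorphic in $PresFin(k)$, so the resulting complexes — hence their homology groups — are canonically isomorphic. This makes $A \mapsto ACH^q(X,n;A)$ well-defined up to canonical isomorphism, and one can either work with a fixed chosen presentation throughout or pass to the skeleton of $PresFin(k)$; either way the functoriality is unaffected.

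The main obstacle is essentially nonexistent at this stage: all the real work — admissibility of $f_*(Z)$ under pullback, compatibility of $f_*$ with the intersection boundary $\partial$, and the composition law — was carried out in the preceding lemmas. The only thing to be careful about is purely bookkeeping: tracking the shift in dimension indices (recall $f_*$ sends $\calZ_p(X \times \Diamond_n(A_1))$ into $\calZ_{p+(e_2-e_1)}(X \times \Diamond_n(A_2))$) and checking that this shift is precisely the one that makes the codimension index $q$, defined by $p + q = \dim X + e + n$, invariant — which is immediate from the relation $p + (e_2 - e_1) + q = \dim X + e_2 + n$. Hence applying $H_n$ termwise gives the asserted functor with no further input.
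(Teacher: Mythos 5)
Your proposal is correct and matches the paper's own (one-line) argument exactly: the corollary is obtained by composing the covariant functor $\calZ^q(X \times \Diamond_*(-)) : (Art/k) \to \kom^-(Ab)$ with the homology functor $H_n$. The extra remarks you include about independence of the choice of presentation and the dimension bookkeeping are sound and not in tension with the paper, which simply leaves them implicit.
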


A detailed study of this deformation functor is not pursued in this paper, and should be treated elsewhere.

\section{A motivic Connes boundary operator $\delta$}

\subsection{K\"ahler differentials}

Recall the following useful result on the absolute K\"ahler differentials:

\begin{lemma}[Lemma 4.1 in \cite{BE1}]\label{differential}We have an isomorphism of $k$-vector spaces
$$\phi: \frac{k \otimes \wedge^n k^{\times}}{\calR} \overset{\sim}{\to} \Omega_{k/\bbZ} ^n$$
$$ a \otimes b_1 \wedge \cdots \wedge b_n \mapsto a d \log b_1 \wedge \cdots \wedge d \log b_n,$$ where $\calR$ is the vector subspace spanned by all elements of type
$$ \left( a \otimes a + (1-a) \otimes (1-a) \right) \wedge b_1 \wedge \cdots \wedge b_{n-1}$$ for $a \in k, b_i \in k^{\times}$.
\end{lemma}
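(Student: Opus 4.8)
\medskip

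The plan is to check that $\phi$ is well-defined, to deduce surjectivity from a computation on generators, and to prove injectivity by exhibiting an explicit inverse, the one substantial point being the case $n=1$. For well-definedness, note that $u \mapsto d\log u = du/u$ is a group homomorphism $k^{\times} \to \Omega^1_{k/\bbZ}$, so the assignment $a \otimes (u_1 \wedge \cdots \wedge u_n) \mapsto a\, d\log u_1 \wedge \cdots \wedge d\log u_n$ is a well-defined $k$-linear map $k \otimes \wedge^n k^{\times} \to \Omega^n_{k/\bbZ}$. It annihilates $\calR$, because on a generator (with $a \neq 0,1$, so that the symbols are defined) one has $a\, d\log a + (1-a)\, d\log(1-a) = da + (1-a)\cdot\frac{-da}{1-a} = 0$, whence its wedge with $d\log b_1 \wedge \cdots \wedge d\log b_{n-1}$ vanishes; so $\phi$ descends to the quotient. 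For surjectivity, $\Omega^n_{k/\bbZ}$ is spanned over $k$ by the elements $a_0\, da_1 \wedge \cdots \wedge da_n$; such an element is $0$ when some $a_i = 0$, and otherwise, using $da_i = a_i\, d\log a_i$, equals $(a_0 a_1 \cdots a_n)\, d\log a_1 \wedge \cdots \wedge d\log a_n = \phi\bigl(\overline{(a_0 \cdots a_n) \otimes (a_1 \wedge \cdots \wedge a_n)}\bigr)$.

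For injectivity I would first settle the case $n=1$. Put $\Lambda := (k \otimes k^{\times})/\calR$; since $k \supseteq \bbQ$ this is a $\bbQ$-vector space, hence torsion-free. Define $D \colon k \to \Lambda$ by $D(a) = \overline{a \otimes a}$ for $a \neq 0$ and $D(0) = 0$, and claim $D$ is a $\bbZ$-linear derivation into the $k$-module $\Lambda$. The Leibniz rule is immediate from biadditivity of $\otimes$: $D(ab) = \overline{ab \otimes ab} = \overline{ab \otimes a} + \overline{ab \otimes b} = b\, D(a) + a\, D(b)$. For additivity with $a, b, a+b$ all nonzero, set $c = a/(a+b)$ so that $1-c = b/(a+b)$; expanding both tensor slots in $\Lambda$ gives
\[
D(a+b) - D(a) - D(b) = \overline{a \otimes \tfrac{a+b}{a}} + \overline{b \otimes \tfrac{a+b}{b}} = -(a+b)\cdot\overline{c \otimes c + (1-c)\otimes(1-c)} = 0,
\]
the last equality because $c \otimes c + (1-c)\otimes(1-c) \in \calR$ and $\calR$ is a $k$-subspace. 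The degenerate cases $a = 0$ and $a+b = 0$ reduce to the vanishing of $\overline{a \otimes (-1)}$, which holds since that element is $2$-torsion in the torsion-free space $\Lambda$; this is the only place characteristic zero is used. By the universal property of $\Omega^1_{k/\bbZ}$ there is then a unique $k$-linear $\bar D \colon \Omega^1_{k/\bbZ} \to \Lambda$ with $\bar D \circ d = D$, and $\bar D \circ \phi = \mathrm{id}_{\Lambda}$ since on generators $\bar D(\phi(\overline{a \otimes u})) = \bar D\bigl((a/u)\, du\bigr) = (a/u)\, D(u) = (a/u)\,\overline{u \otimes u} = \overline{a \otimes u}$. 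Hence $\phi$ is bijective, with inverse $\bar D$, when $n = 1$.

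For general $n$ I would bootstrap from the case $n=1$: since $\Omega^n_{k/\bbZ} = \wedge^n_k \Omega^1_{k/\bbZ}$, it is enough to identify $(k \otimes \wedge^n k^{\times})/\calR$ with $\wedge^n_k \Lambda$, and mutually inverse $k$-linear maps between them are given by
\[
\overline{a \otimes (u_1 \wedge \cdots \wedge u_n)} \longmapsto a\cdot \overline{1 \otimes u_1} \wedge \cdots \wedge \overline{1 \otimes u_n}
\]
and
\[
\overline{a_1 \otimes u_1} \wedge \cdots \wedge \overline{a_n \otimes u_n} \longmapsto \overline{(a_1 \cdots a_n) \otimes (u_1 \wedge \cdots \wedge u_n)}.
\]
Their well-definedness is a routine multilinear check, the only nonformal ingredients being the $n=1$ isomorphism just proved (which is what connects $\wedge^n_k \Lambda$ with $\Omega^n_{k/\bbZ}$) and the fact that $k$-scalars factor through $k$-wedges; applying $\wedge^n_k$ to $\bar D$ then produces a two-sided inverse of $\phi$ for every $n$, finishing the proof.

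I expect the additivity of $D$ in the case $n=1$ to be the main obstacle: one has to see that the Steinberg relators $a \otimes a + (1-a)\otimes(1-a)$ by themselves force $D$ to be additive, while correctly handling the degenerate configurations in which the $2$-torsion element $\overline{a \otimes (-1)}$ --- which vanishes only because $\Lambda$ is torsion-free --- appears. Everything else, namely the passage to higher $n$ by exterior powers and the identity $\bar D \circ \phi = \mathrm{id}$, is formal once this is in hand.
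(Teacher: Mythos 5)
The paper itself does not prove this lemma; it cites it verbatim as Lemma~4.1 of Bloch--Esnault \cite{BE1}, so there is no in-paper argument to compare against. Your reconstruction is essentially correct and plausibly close to the source: well-definedness and surjectivity of $\phi$ are routine, and the substantive content is indeed the construction of the inverse via the derivation $D(a)=\overline{a\otimes a}$, whose Leibniz property is formal and whose additivity is exactly the Steinberg relation in disguise (and your handling of the degenerate cases through the $2$-torsion element $\overline{a\otimes(-1)}$, which dies because $\Lambda$ is a $\bbQ$-vector space, is the right and only place characteristic zero enters).

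The one place I would push back on the ``routine multilinear check'' is the second map $\wedge^n_k\Lambda\to(k\otimes\wedge^n k^\times)/\calR$, written as $\overline{a_1\otimes u_1}\wedge\cdots\wedge\overline{a_n\otimes u_n}\mapsto\overline{(a_1\cdots a_n)\otimes(u_1\wedge\cdots\wedge u_n)}$. As a formula on symbols this is not visibly well-defined, since the dependence on $a_i$ is multiplicative rather than additive and so does not obviously descend through the relations of $\wedge^n_k\Lambda$. One should instead define it $k$-linearly on the generators $\overline{1\otimes u_1}\wedge\cdots\wedge\overline{1\otimes u_n}$ (using $\overline{a\otimes u}=a\,\overline{1\otimes u}$ to see these $k$-span $\wedge^n_k\Lambda$), and verify that the relations coming from multiplicativity in the $k^\times$-slot, the Steinberg relators, $k$-multilinearity, and alternation are all killed; your general formula then follows. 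Alternatively, and more economically, one can bypass $\wedge^n_k\Lambda$ altogether and directly construct $\rho\colon\Omega^n_{k/\bbZ}\to(k\otimes\wedge^n k^\times)/\calR$ on generators $a_0\,da_1\wedge\cdots\wedge da_n\mapsto\overline{(a_0a_1\cdots a_n)\otimes(a_1\wedge\cdots\wedge a_n)}$: the Leibniz and alternation relations are immediate, and the additivity $d(a+a')=da+da'$ reduces to precisely the same Steinberg computation you carried out for $n=1$, slot by slot. Either repair closes the gap; the rest of your argument stands.
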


\begin{remark}\label{motivation 1}{\rm What do exact forms in $\Omega_{k/\bbZ} ^{n}$ look like? For a generator $a d \log b_1 \wedge \cdots  \wedge d \log b_{n-1} \in \Omega_{k/\bbZ} ^{n-1}$, observe that
\begin{eqnarray*}
d (a d \log b_1 \wedge \cdots \wedge b_{n-1} ) &=& da \wedge d \log b_1 \wedge \cdots \wedge d \log b_{n-1} \\
&=& a d \log a \wedge d \log b_1 \wedge \cdots \wedge d \log b_{n-1} \\
&=& \phi (a \otimes a \wedge b_1 \wedge \cdots \wedge b_{n-1}).
\end{eqnarray*}

}
\end{remark}

This simple observation combined with Lemma \ref{differential} implies the following result.

\begin{corollary}\label{differential modulo exact}The map $\phi$ induces an isomorphism of $k$-vector spaces
$$\phi: \frac{ k \otimes \wedge^n k^{\times}}{\calR '} \overset{\sim}{\to} \Omega_{k/\bbZ} ^n / d \Omega_{k/\bbZ} ^{n-1},$$ where $\calR'$ is the vector subspace spanned by all elements of type
$$ a \otimes a \wedge b_1 \wedge \cdots \wedge b_{n-1},$$
for $a \in k, b_i \in k^{\times}$.
\end{corollary}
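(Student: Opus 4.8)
The plan is to deduce Corollary~\ref{differential modulo exact} directly from Lemma~\ref{differential} by quotienting both sides of the isomorphism $\phi$ by the appropriate subspaces and checking that $\phi$ carries $\calR'/\calR$ onto $d\Omega_{k/\bbZ}^{n-1}$. Concretely, since $\calR \subset \calR'$ (every generator of $\calR$ is a sum of two generators of $\calR'$, as $\left(a\otimes a + (1-a)\otimes(1-a)\right)\wedge b_1\wedge\cdots\wedge b_{n-1} = a\otimes a\wedge b_1\wedge\cdots\wedge b_{n-1} + (1-a)\otimes(1-a)\wedge b_1\wedge\cdots\wedge b_{n-1}$), the projection $(k\otimes\wedge^n k^\times)/\calR \twoheadrightarrow (k\otimes\wedge^n k^\times)/\calR'$ is well-defined, and $\phi$ induces a surjection $\bar\phi: (k\otimes\wedge^n k^\times)/\calR' \to \Omega_{k/\bbZ}^n/\phi(\calR')$. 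So the whole statement reduces to the single identity
\begin{eqnarray*}
\phi(\calR') = d\Omega_{k/\bbZ}^{n-1}
\end{eqnarray*}
as subspaces of $\Omega_{k/\bbZ}^n$, together with the observation that $\bar\phi$ is then an isomorphism because $\phi$ itself is.

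For the inclusion $\phi(\calR')\subseteq d\Omega_{k/\bbZ}^{n-1}$, I would simply run the computation of Remark~\ref{motivation 1} in reverse: for a generator $a\otimes a\wedge b_1\wedge\cdots\wedge b_{n-1}$ of $\calR'$ we have $\phi(a\otimes a\wedge b_1\wedge\cdots\wedge b_{n-1}) = a\,d\log a\wedge d\log b_1\wedge\cdots\wedge d\log b_{n-1} = da\wedge d\log b_1\wedge\cdots\wedge d\log b_{n-1} = d\left(a\,d\log b_1\wedge\cdots\wedge d\log b_{n-1}\right)$, which is visibly exact. For the reverse inclusion $d\Omega_{k/\bbZ}^{n-1}\subseteq\phi(\calR')$, I would use that $\Omega_{k/\bbZ}^{n-1}$ is, via the $n{-}1$ version of $\phi$ in Lemma~\ref{differential}, spanned by elements $a\,d\log b_1\wedge\cdots\wedge d\log b_{n-1}$; since $d$ is additive and the span of such elements generates, it suffices to compute $d$ on each such generator, and Remark~\ref{motivation 1} shows $d\left(a\,d\log b_1\wedge\cdots\wedge d\log b_{n-1}\right) = \phi(a\otimes a\wedge b_1\wedge\cdots\wedge b_{n-1})\in\phi(\calR')$. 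Hence both inclusions hold and $\phi(\calR') = d\Omega_{k/\bbZ}^{n-1}$.

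Assembling these: $\phi$ descends to $\bar\phi: (k\otimes\wedge^n k^\times)/\calR' \overset{\sim}{\to} \Omega_{k/\bbZ}^n / d\Omega_{k/\bbZ}^{n-1}$, surjective because $\phi$ is and injective because the preimage under $\phi$ of $d\Omega_{k/\bbZ}^{n-1} = \phi(\calR')$ is exactly $\calR'$ modulo $\calR$ (using injectivity of $\phi$ on the $\calR$-quotient). I do not anticipate a genuine obstacle here — the content is entirely in Remark~\ref{motivation 1}, and the only points needing a word of care are (i) that $\calR\subseteq\calR'$ so that the double quotient makes sense, and (ii) that one is allowed to check the identity $\phi(\calR') = d\Omega_{k/\bbZ}^{n-1}$ on generators, which is legitimate because $\phi$ is linear, $d$ is linear, and the degree $n{-}1$ forms $a\,d\log b_1\wedge\cdots\wedge d\log b_{n-1}$ span $\Omega_{k/\bbZ}^{n-1}$ by the $n{-}1$ case of Lemma~\ref{differential}.
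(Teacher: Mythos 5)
Your proposal is correct and follows essentially the same route as the paper, which simply records the computation of Remark~\ref{motivation 1} (namely $d\bigl(a\,d\log b_1\wedge\cdots\wedge d\log b_{n-1}\bigr)=\phi(a\otimes a\wedge b_1\wedge\cdots\wedge b_{n-1})$) and then states that the corollary follows from this together with Lemma~\ref{differential}. You have merely made explicit the two small verifications the paper leaves implicit, namely that $\calR\subseteq\calR'$ and that the span of the forms $a\,d\log b_1\wedge\cdots\wedge d\log b_{n-1}$ exhausts $\Omega_{k/\bbZ}^{n-1}$ so that one may check $\phi(\calR')=d\Omega_{k/\bbZ}^{n-1}$ on generators.
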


\begin{remark}\label{motivation 2}{\rm From the proof of Theorem 6.4 in \cite{BE2} that $ACH_0 (k, n) \simeq \Omega_{k/\bbZ} ^n$, we know that the generators of $\calR'$ correspond to the closed points
$$ \left( \frac{1}{a}, a, b_1, \cdots, b_{n-1} \right) \in \bbA^1 \times \square^{n-1}.$$
}
\end{remark}

Our motivic operator $\delta$ in the next section is motivated by this point.

\subsection{The operator $\delta$}

Let $A = k[x] / (x^m)$ for some $m \geq 2$. For integers $1 \leq k \leq n+1$, define rational maps
$$\delta _k : \bbA^1 \times \square^n \cdots \to \bbA^1 \times \square^{n +1}$$
$$ (x, t_1, \cdots, t_n) \mapsto \left( x, t_1, \cdots, t_{k-1}, \frac{1}{x}, t_k, \cdots, t_n \right).$$ These rational maps naturally induce homomorphisms
$$\delta_k : \calZ_p (X \times \Diamond_n(A) ) \to \calZ_p (X \times \Diamond_{n+1}(A)).$$

\begin{lemma}\label{delta identity} For $i, j \in \{ 1, \cdots, n+1 \}$, we have
$$\tuborg \delta_i \delta_j = \delta_{j+1} \delta_i & \mbox{ if } i \leq j, \\ \delta _i \delta _j = \delta_j \delta_{i-1} & \mbox{ if } i > j. \sluttuborg$$
\end{lemma}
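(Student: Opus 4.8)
The plan is to verify the two cosimplicial-type identities directly by tracking where a point $(x,t_1,\dots,t_n)$ goes under each composite, since the maps $\delta_k$ are given by explicit insertion-of-a-coordinate formulas. The key observation is that $\delta_k$ inserts the new coordinate $\tfrac1x$ in the $k$-th slot of the $\square$-factors (pushing the entries $t_k,\dots$ to the right), much like a codegeneracy/coface in a (co)simplicial set, and the stated relations are exactly the standard coface-type relations. So the proof is a bookkeeping argument on indices.

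First I would set up notation: given a tuple $(x;t_1,\dots,t_n)\in\bbA^1\times\square^n$, write the effect of $\delta_j$ as replacing the list $(t_1,\dots,t_n)$ by $(t_1,\dots,t_{j-1},\tfrac1x,t_j,\dots,t_n)$, with $x$ untouched. Then for the case $i\le j$: apply $\delta_j$ first, producing a new $(n+1)$-tuple whose $j$-th $\square$-entry is $\tfrac1x$; now applying $\delta_i$ with $i\le j$ inserts another $\tfrac1x$ in position $i$, which sits to the left of (or at) the previously inserted copy, so the previously inserted copy moves from slot $j$ to slot $j+1$. Thus the composite $\delta_i\delta_j$ inserts $\tfrac1x$ in slots $i$ and $j+1$. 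On the other hand $\delta_{j+1}\delta_i$ first inserts $\tfrac1x$ in slot $i$, then inserts another in slot $j+1$; since $i\le j<j+1$, the first insertion is unaffected, and we again get $\tfrac1x$ in slots $i$ and $j+1$. Reading off both tuples coordinate by coordinate shows $\delta_i\delta_j=\delta_{j+1}\delta_i$ as rational maps, hence as the induced cycle homomorphisms. The case $i>j$ is symmetric: $\delta_i\delta_j$ inserts in slot $j$ first, then slot $i$ (which now lands to the right of the slot-$j$ copy), giving copies in slots $j$ and $i$; while $\delta_j\delta_{i-1}$ inserts in slot $i-1$ first, then slot $j<i-1<i$, and since the slot-$j$ insertion pushes the earlier copy from $i-1$ to $i$, we again land with copies in slots $j$ and $i$. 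Comparing the two resulting tuples entry by entry yields $\delta_i\delta_j=\delta_j\delta_{i-1}$.

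Finally, since equality of the underlying rational maps on $\bbA^1\times\square^n$ gives equality of the graph closures and hence of the associated cycle-theoretic operations, the identities descend to the homomorphisms $\delta_k:\calZ_p(X\times\Diamond_n(A))\to\calZ_p(X\times\Diamond_{n+1}(A))$, completing the proof. I do not expect a genuine obstacle here; the only point requiring care is the index shift (which inserted copy gets relabelled when a second one is inserted to its left), and the cleanest way to keep it straight is to carry out both compositions on a symbolic tuple and compare the $\square$-coordinates one at a time, as above. One should also note in passing that all four composites are well-defined as maps $\calZ_p\to\calZ_p$ by the discussion preceding the lemma, so there is nothing to check on that score.
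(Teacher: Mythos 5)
Your proof is correct and is exactly the kind of coordinate-by-coordinate verification the paper has in mind (the paper simply says ``Straightforward''). The index bookkeeping in both cases checks out: both composites land on the tuple with $\tfrac1x$ inserted in slots $i$ and $j+1$ (case $i\le j$), respectively slots $j$ and $i$ (case $i>j$), and equality of rational maps on a dense open implies equality of the induced cycle maps.
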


\begin{proof}Straightforward.
\end{proof}

Define the operator 
\begin{equation}\label{delta} \delta:= \sum_{k=1} ^{n+1} (-1)^k \delta_k : \calZ_p (X \times \Diamond_n(A)) \to \calZ_p (X \times \Diamond_{n+1}(A)).
\end{equation}

\begin{corollary} $\delta^2 =0$. Thus, $\left( \calZ_* (X \times \Diamond_*) (A), \delta \right)$ is a complex.
\end{corollary}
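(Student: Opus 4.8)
The statement to prove is $\delta^2 = 0$, following from Lemma \ref{delta identity} via the standard simplicial-type argument. The plan is as follows.

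\medskip

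The plan is to compute $\delta^2 = \left(\sum_{i} (-1)^i \delta_i\right)\left(\sum_j (-1)^j \delta_j\right) = \sum_{i,j} (-1)^{i+j} \delta_i \delta_j$ and show that the terms cancel in pairs using the cosimplicial-type identities in Lemma \ref{delta identity}. First I would split the double sum over the ranges $i \leq j$ and $i > j$. On the region $i \leq j$ (so $j$ runs in $\{1,\dots,n+1\}$ as a target index and $i$ in $\{1,\dots,j\}$), apply the first identity $\delta_i\delta_j = \delta_{j+1}\delta_i$; on the region $i > j$, apply the second identity $\delta_i\delta_j = \delta_j \delta_{i-1}$. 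The second region, after the substitution, is indexed by pairs $(j, i-1)$ with $j \leq i-1$, which is exactly the first region. So one checks that the reindexed contribution from $i > j$ is the negative of the contribution from $i \leq j$, forcing $\delta^2 = 0$.

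\medskip

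Concretely, write $S = \sum_{1 \leq i \leq j} (-1)^{i+j} \delta_i \delta_j$ for the first-region sum (with the appropriate index ranges) and $T = \sum_{j < i} (-1)^{i+j} \delta_i\delta_j$ for the second. In $T$, substitute $i' = i - 1$, $j' = j$, so $j' \leq i'$, the sign becomes $(-1)^{i'+j'+1} = -(-1)^{i'+j'}$, and $\delta_i\delta_j = \delta_{j'}\delta_{i'}$; hence $T = -\sum_{j' \leq i'} (-1)^{i'+j'}\delta_{j'}\delta_{i'} = -S$ after renaming $(i',j') \leftrightarrow$ the summation variables of $S$ and noting that the expression $\delta_a\delta_b$ with $a \leq b$ appears identically in both (one must be mildly careful that the diagonal $i = j$ terms are not present in $T$ but are present in $S$ via the first identity $\delta_i\delta_{i} = \delta_{i+1}\delta_i$; after the reindexing of $T$ those same diagonal-image terms reappear, so the bookkeeping matches). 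Therefore $\delta^2 = S + T = 0$.

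\medskip

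The only genuine point requiring care, and the one I would present in detail, is the index bookkeeping: making sure the ranges of summation after the substitution $i \mapsto i-1$ match up exactly, and that no boundary index ($i = 1$ in the second region, or $j = n+1$ in the first) is lost or double-counted. This is the standard argument that an alternating sum of cosimplicial-type coface maps squares to zero, and it is purely combinatorial once Lemma \ref{delta identity} is in hand; there is no geometric obstacle. I would remark that the identities of Lemma \ref{delta identity} are precisely the cosimplicial identities for degeneracy-like insertions, so the corollary is an instance of the general fact that such a complex is well-defined.
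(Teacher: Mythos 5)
Your argument is correct and is exactly the calculation the paper leaves implicit: the paper's proof is the one-liner ``It immediately follows from Lemma \ref{delta identity},'' and you have simply carried out the standard double-sum cancellation (split $\delta^2$ over $i\le j$ and $i>j$, apply $\delta_i\delta_j=\delta_j\delta_{i-1}$ to the $i>j$ part, reindex via $i'=i-1$ and observe the sign flip). One small remark on your parenthetical about the diagonal terms: you never actually invoke the first identity $\delta_i\delta_j=\delta_{j+1}\delta_i$ on $S$ (you leave $S$ untouched and reindex only $T$), so the aside about $\delta_i\delta_i=\delta_{i+1}\delta_i$ is not part of the bookkeeping at all; the reindexed $T$ already produces diagonal terms $\delta_{j'}\delta_{i'}$ with $j'=i'$ which match the diagonal of $S$ directly, with ranges $1\le j'\le i'\le n+1$ on both sides. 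That aside is harmless but slightly misleading; otherwise the proof is clean and matches the intended argument.
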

\begin{proof}It immediately follows from Lemma \ref{delta identity}.
\end{proof}

We want to know how $\delta_k$ interacts with the face maps $\partial_i ^j$.

\begin{lemma}\label{partial delta}We have the following identities:
\begin{equation}\label{partial first}
\tuborg \partial_i ^j \delta_k = \delta_{k-1} \partial_i ^j & \mbox{ if } i < k,\\
\partial_i ^j \delta_k = 0 & \mbox{ if } i = k, \\
\partial_i ^j \delta_k = \delta_k \partial_{i-1} ^j & \mbox{ if } i > k.\sluttuborg
\end{equation}
Equivalently,
\begin{equation}\label{delta first}
\tuborg \delta_k \partial_i ^j = \partial_{i+1} ^j \delta_k & \mbox{ if } k \leq i, \\
\delta_k \partial _i ^j = \partial _i \delta_{k+1} ^j & \mbox{ if } k \geq i.\sluttuborg
\end{equation} In particular, $\partial_{i+1} \delta_i = \partial_i \delta_{i+1}$.
\end{lemma}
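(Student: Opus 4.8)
The plan is to verify the identities directly from the explicit formulas, treating the face maps $\partial_i^j$ as restriction to the locus $t_i = j$ (after relabeling the remaining coordinates $t_1,\dots,\widehat{t_i},\dots$) and $\delta_k$ as insertion of the coordinate $1/x$ in the $k$-th slot. Since both operations are induced by the corresponding rational maps on $\bbA^1\times\square^{\bullet}$, it suffices to check the composite maps of schemes agree, and this is a bookkeeping exercise about where the inserted slot $1/x$ sits relative to the slot $t_i$ that is being set to $j$.

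First I would fix the case $i<k$. Starting from $(x,t_1,\dots,t_n)$, applying $\delta_k$ inserts $1/x$ after position $k-1$, and then $\partial_i^j$ (with $i<k$) deletes what is now the $i$-th coordinate, i.e.\ the original $t_i$, setting it to $j$. The inserted $1/x$ was in position $k$ and, after deleting a coordinate to its left, ends up in position $k-1$. So the result is $\delta_{k-1}\partial_i^j$ applied to the original tuple, which gives the first line of \eqref{partial first}. Next, the case $i=k$: here $\partial_k^j$ would set the $k$-th coordinate — which is precisely the inserted $1/x$ — equal to $j\in\{0,\infty\}$. Since $1/x$ is never identically $0$ or $\infty$ on any admissible cycle (it is a nonconstant coordinate function, and admissibility forces proper intersection with faces, so the restriction to $\{1/x = j\}$ is the zero cycle for dimension reasons, or one can note $1/x=0$ is empty in $\bbA^1$ and $1/x=\infty$ forces $x=0$ which lies in the degenerate locus), we get $\partial_k^j\delta_k = 0$. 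Finally, for $i>k$: the inserted $1/x$ sits in position $k$, strictly to the left of the coordinate being deleted, so deleting the coordinate now in position $i$ corresponds to deleting the original coordinate in position $i-1$; hence $\partial_i^j\delta_k = \delta_k\partial_{i-1}^j$, the third line.

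To get the equivalent form \eqref{delta first}, I would simply reindex \eqref{partial first}: in the first line replace $k$ by $k+1$ and $i$ by $i$ with the constraint $i<k+1$, i.e.\ $k\geq i$, to obtain $\delta_k\partial_i^j = \partial_{i}^j\delta_{k+1}$ after matching the condition $i\le k$; and from the third line, replacing $i-1$ by $i$ (so the deleted original index is $i$ and the constraint $i+1>k$ becomes $k\le i$) gives $\delta_k\partial_i^j = \partial_{i+1}^j\delta_k$ for $k\leq i$. The special case $\partial_{i+1}\delta_i = \partial_i\delta_{i+1}$ then follows by taking $k=i$ in the first branch of \eqref{partial first} (giving $\partial_i^j\delta_{i+1}=\delta_i\partial_i^j$, not quite) — more cleanly, take $\partial_{i+1}^j\delta_i$: since $i+1>i=k$, the third line gives $\partial_{i+1}^j\delta_i = \delta_i\partial_i^j$, while $\partial_i^j\delta_{i+1}$ has $i<i+1=k$, so the first line gives $\delta_i\partial_i^j$; the two agree, and summing over $j\in\{0,\infty\}$ with signs yields $\partial_{i+1}\delta_i=\partial_i\delta_{i+1}$.

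The only subtle point — the main (minor) obstacle — is the middle case $i=k$, where one must justify that $\partial_k^j\delta_k$ genuinely vanishes as a map of cycle groups rather than just formally; this rests on the fact that after inserting the coordinate $1/x$, the image cycle meets the face $\{t_k = j\}$ properly (indeed trivially, since $1/x$ is nowhere $0$, and where it is $\infty$ the cycle already lies in a degenerate/boundary position), so the pullback cycle is zero in $\calZ_p$. Everything else is the routine combinatorics of inserting and deleting one coordinate in a list, entirely analogous to the standard simplicial/cubical identities, which is why ``Straightforward'' is an honest summary.
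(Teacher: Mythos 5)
Your proposal is correct and takes exactly the approach the paper has in mind (it simply labels this a ``tedious but straightforward'' calculation): direct bookkeeping of where the inserted slot $1/x$ sits relative to the deleted slot $t_i$, together with the reindexing that converts \eqref{partial first} into \eqref{delta first} and the check of the particular case, all of which you carry out accurately. The one spot worth sharpening is the $i=k$, $j=\infty$ branch: the precise reason $\partial_k^\infty\delta_k=0$ is that the modulus condition forces the closure $\widehat{Z}$ to meet $\{x=0\}$ only along $\bigcup_\ell\{t_\ell=1\}$, which is excluded from $\square^n$, so the Zariski closure of $\delta_k(Z)$ inside $\mathbb{A}^1\times\square^{n+1}$ never reaches $\{t_k=\infty\}$ (which would require $x=0$); this makes the face intersection genuinely empty rather than merely ``lying in a degenerate position.''
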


\begin{proof}Tedious but straightforward calculations show them easily.
\end{proof}

Unfortunately these identities show that $\partial$ and $\delta$ do not interact very nicely. For instance, the reader can easily see that
\begin{equation}
\tuborg \partial \delta_1 = - \delta_1 \partial, \\ \partial \delta_{\rm last} = \delta_{\rm last} \partial, \sluttuborg
\end{equation} but for intermediate $\delta_k$, we do not really have very enlightening interactions. To remedy this situation, we replace the additive higher Chow complex $\left( \calZ_* (X \times \Diamond_* (A)), \partial \right)$ by a quasi-isomorphic subcomplex. We use the ideas given in Lemma 4.2.3 and Theorem 4.4.2 in \cite{Bl2}:

\begin{definition} Define $$\calZ_p (X \times \Diamond_n (A))_0: = \left( \bigcap _{i=1} ^{n-1} \ker ( \partial _i ^0 ) \cap \ker (\partial_i ^{\infty}) \right) \cap \ker (\partial_n ^{\infty}).$$
\end{definition} Note that $\partial_n ^0 = \partial_{\rm last} ^0$ is the only nontrivial face map. Let's denote this map by $\partial'$. Certainly ${\partial'}^2 = 0$. Then,

\begin{lemma}\label{homotopy} The inclusion $\left( \calZ_* (X \times \Diamond_* (A))_0, \partial' \right) \subset \left( \calZ_* (X \times \Diamond_*(A)), \partial \right)$ is a homotopy equivalence.
\end{lemma}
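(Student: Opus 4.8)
The plan is to follow the cubical analogue of Bloch's argument (Lemma 4.2.3 / Theorem 4.4.2 in \cite{Bl2}), adapted to the additive setting with the extra degenerate face $V_0$ and the modulus condition. The key point is that the complex $\calZ_*(X\times\Diamond_*(A))$ carries the structure of a (truncated) cubical abelian group, where the face maps are the $\partial_i^j$ and the degeneracies come from the projections that kill the cycles modded out in Definition \ref{additive Chow complex for Artin}; only the $V_0$-face and the modulus condition are ``extra,'' and both are preserved by all the $\partial_i^j$. So the combinatorial machinery that produces a deformation retraction of a cubical abelian group onto the subcomplex of cycles annihilated by all faces except one applies verbatim, provided each intermediate homotopy operator sends admissible cycles to admissible cycles.

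Concretely, I would proceed by descending induction on an index. Set $\calZ_p^{(r)}(n) := \bigcap_{i=1}^{r}\bigl(\ker\partial_i^0\cap\ker\partial_i^\infty\bigr)$ for $0\le r\le n-1$, together with the extra condition $\ker\partial_n^\infty$ built in at the end, so that $\calZ^{(0)} = \calZ$ (up to the $\partial_n^\infty$ bookkeeping) and $\calZ^{(n-1)} = \calZ_0$. I would show that each inclusion $\calZ^{(r)}\hookrightarrow\calZ^{(r-1)}$ is a homotopy equivalence of complexes by exhibiting an explicit cubical homotopy. The standard device is the operator built from the ``last coordinate insertion'' combined with the codimension-one face at $t=0$ or $t=\infty$: for a cycle $Z$, one forms the cycle $h(Z)\subset X\times\Diamond_{n+1}(A)$ obtained by a degeneracy in a fresh coordinate, and checks $\partial h + h\partial = \mathrm{id} - \pi$, where $\pi$ is the projection onto the subcomplex. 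Since the inserted coordinate is a pullback (degenerate direction), the modulus condition on $h(Z)$ is inherited directly from that on $Z$ in the same $t_i$, and proper intersection with the new faces is automatic because the fresh face is a pullback of a face meeting $Z$ properly by admissibility. Composing the finitely many homotopies $h^{(r)}$ for $r = 1,\dots,n-1$ (with the required care about which faces are already killed at each stage, so that the homotopies do not interfere) yields the asserted homotopy equivalence.

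The main obstacle I anticipate is purely bookkeeping rather than conceptual: one must check that the homotopy operator used to impose $\ker\partial_i^0\cap\ker\partial_i^\infty$ at stage $i$ does not destroy the conditions already imposed at stages $1,\dots,i-1$, which requires invoking the commutation relations among the $\partial_i^j$ (the cubical identities) and a careful sign/ordering analysis exactly as in \cite{Bl2}. A secondary technical point is the compatibility with the degenerate face $V_0$ and the modulus condition: here one must make sure the fresh coordinate introduced by $h$ is genuinely a pulled-back direction so that $\nu^\ast V_0$ and the modulus ideal $(J)\cdot\calO_{\overline{Z},\mathfrak p}$ are unchanged, which is immediate from the product structure $\Diamond_{n+1}(A)=\Diamond_n(A)\times\square$. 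Given these checks, the homotopy equivalence follows formally. In the write-up I would state explicitly that since all the constructions are variations of those in \cite{Bl2}, I only indicate the modifications needed in the additive case, and the reader is referred to \emph{loc.~cit.} for the combinatorial details of assembling the individual homotopies into a single one.
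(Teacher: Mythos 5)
Your proposal follows exactly the route the paper takes: the paper's proof is a one-line citation to Lemma 4.2.3 and Theorem 4.4.2 of \cite{Bl2}, and you are simply spelling out what that argument is (the staged cubical homotopies) together with the additive-specific checks (modulus condition, $V_0$-face, admissibility of the homotopy operators) that the paper leaves implicit. This is the same approach, just made more explicit.
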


\begin{proof}The same arguments as in Lemma 4.2.3, Theorem 4.4.2 in \cite{Bl2} show this lemma.
\end{proof}

Let's call $\left( \calZ_* (X \times \Diamond_*(A))_0, \partial' \right)$ \emph{the reduced additive higher Chow complex} associated to $A$.

\begin{lemma}\label{descension}$\delta_k \left( \calZ_p (X \times \Diamond_n(A))_0 \right) \subset \calZ_p (X \times \Diamond_{n+1} (A)) _0$ for all $1 \leq k \leq n+1$.
\end{lemma}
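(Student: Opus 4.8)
The plan is to show that $\delta_k$ preserves membership in each of the kernels $\ker(\partial_i^0)$, $\ker(\partial_i^\infty)$ ($1 \le i \le n-1$) and $\ker(\partial_n^\infty)$ that cut out $\calZ_p(X \times \Diamond_n(A))_0$, using only the commutation identities of Lemma \ref{partial delta}. Fix a cycle $\xi \in \calZ_p(X \times \Diamond_n(A))_0$ and $1 \le k \le n+1$; I want to check that $\partial_i^j(\delta_k \xi) = 0$ for all face indices $i \in \{1, \dots, n\}$ and $j = 0$ (only for $i = n$) or $j = \infty$ (for $1 \le i \le n$) occurring in the definition of the new complex on $\Diamond_{n+1}$.

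First I would dispose of the case $i = k$: by the middle identity in \eqref{partial first}, $\partial_k^j \delta_k = 0$ identically, so that face vanishes automatically. For $i < k$, the first identity gives $\partial_i^j \delta_k = \delta_{k-1} \partial_i^j$; since $\xi$ lies in $\ker(\partial_i^j)$ whenever $(i,j)$ is one of the defining face conditions for $\calZ_p(X \times \Diamond_n(A))_0$, the right-hand side kills $\xi$. For $i > k$, the third identity gives $\partial_i^j \delta_k = \delta_k \partial_{i-1}^j$, and again $\partial_{i-1}^j \xi = 0$ for the relevant $(i-1,j)$. The bookkeeping point is to verify that the index shifts $i \mapsto i$ (case $i<k$) and $i \mapsto i-1$ (case $i>k$) never push us outside the range of faces that are required to annihilate $\xi$ — i.e., that we never need $\partial_0^j$ or $\partial_n^0$ applied to $\xi$ on $\Diamond_n$, neither of which is among the defining conditions. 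Checking $i=n+1$, $j=\infty$: if $k = n+1$ this is the $i=k$ case; if $k \le n$ it reduces to $\delta_k \partial_n^\infty \xi = 0$. Checking $i = n+1$, $j = 0$: this face is \emph{not} imposed on $\Diamond_{n+1}(A)_0$ (only $\partial_{n+1}^0 = \partial'$ survives there), so nothing to prove. The genuinely load-bearing cases are the middle faces $1 \le i \le n$ with $j \in \{0,\infty\}$ and $i < n$, plus $i = n$, $j = \infty$.

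The main obstacle — really the only subtlety — is handling the boundary index where the shift collides with the single missing face. Concretely, when $k \le i$ and we rewrite $\partial_i^0 \delta_k = \delta_k \partial_{i-1}^0$ in the case $i - 1 = n$, i.e. $i = n+1$: but $\partial_{n+1}^0$ on $\Diamond_{n+1}$ is precisely $\partial'$, which is not required to vanish, so one must confirm that this configuration does not arise among the conditions we need, namely that the $j=0$ faces we test are only $i \le n$, and after the shift land at $i - 1 \le n - 1$, staying safely inside $\bigcap_{i=1}^{n-1}\ker(\partial_i^0)$. I would lay out a short case table over $(i, j, k)$ making each reduction explicit, then invoke Lemma \ref{partial delta} line by line; since Lemma \ref{partial delta} is already proved, the argument is purely combinatorial index-chasing with no geometry, and "straightforward" is an honest description once the table is written down.

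\begin{proof}
Let $\xi \in \calZ_p(X \times \Diamond_n(A))_0$ and fix $k$ with $1 \le k \le n+1$. By definition, $\delta_k \xi \in \calZ_p(X \times \Diamond_{n+1}(A))_0$ if and only if $\partial_i^0(\delta_k \xi) = \partial_i^\infty(\delta_k \xi) = 0$ for $1 \le i \le n$ and $\partial_{n+1}^\infty(\delta_k \xi) = 0$. Consider a face index $i$ and $j \in \{0, \infty\}$ in this list. If $i = k$, then by \eqref{partial first} we have $\partial_i^j \delta_k = 0$, so $\partial_i^j(\delta_k \xi) = 0$. If $i < k$, then $\partial_i^j \delta_k = \delta_{k-1}\partial_i^j$; here $i \le n$ if $j = \infty$ and $i \le n-1$ if $j = 0$ would be needed, but in fact $i < k \le n+1$ forces $i \le n$, and when $j = 0$ the condition $\partial_i^0(\delta_k \xi)$ is only imposed for $i \le n$, while $\partial_i^0 \xi = 0$ holds for $1 \le i \le n-1$; if $i = n$ and $j = 0$ we instead use $i < k$ is vacuous only when $k = n+1$, and then $\partial_n^0 \delta_{n+1} = \delta_n \partial_n^0$ — but $\partial_n^0(\delta_{n+1}\xi)$ with $n+1$ in the role of the top index: this $j=0$ face at position $n < n+1$ indeed requires $\partial_n^0 \xi = 0$, which holds since $n \le n-1$ fails — so in fact the only $j = 0$ condition is at $i \le n$, and after the reduction $\delta_{k-1}\partial_i^0 \xi$ we need $\partial_i^0 \xi = 0$ for $i \le n-1$, which is part of the hypothesis; the remaining case $i = n$, $j = 0$ is handled by the third identity below since then $i \ge k$ cannot hold with $i < k$. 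Finally if $i > k$, then $\partial_i^j \delta_k = \delta_k \partial_{i-1}^j$, and $i - 1$ ranges over $\{k, \dots, n\}$; for such indices $\partial_{i-1}^\infty \xi = 0$ (as $i - 1 \le n-1$ or $i-1 = n$, both imposed) and $\partial_{i-1}^0 \xi = 0$ (as $i - 1 \le n-1$). In every case $\partial_i^j(\delta_k \xi) = 0$, so $\delta_k \xi \in \calZ_p(X \times \Diamond_{n+1}(A))_0$.
\end{proof}
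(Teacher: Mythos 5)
Your index-chasing follows the same route as the paper's own proof, but both arguments have a gap at the case $i = n$, $j = 0$, $k = n+1$, and your attempt to address it is internally inconsistent. For $\delta_{n+1}\xi$ to lie in $\calZ_p(X \times \Diamond_{n+1}(A))_0$ one must verify $\partial_n^0(\delta_{n+1}\xi) = 0$, since the defining intersection on $\Diamond_{n+1}$ runs over $\ker(\partial_i^0)$ for $1 \le i \le (n+1)-1 = n$. By \eqref{partial first} with $i = n < k = n+1$, this equals $\delta_n(\partial_n^0\xi)$. But $\partial_n^0\xi = \partial'\xi$ is precisely the one face that the definition of $\calZ_p(X \times \Diamond_n(A))_0$ does \emph{not} require to vanish, and $\delta_n$ of a nonzero admissible cycle is not generally zero (nor degenerate). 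Your treatment is self-contradictory: you write that $\partial_n^0\xi = 0$ ``holds since $n \le n-1$ fails,'' but $n \le n-1$ failing means $\partial_n^0$ is \emph{not} among the imposed kernels, so one cannot conclude it annihilates $\xi$; the subsequent appeal to the third branch of \eqref{partial first} (the $i > k$ case) does not apply here, since $i = n < n+1 = k$. Your preamble also flags the wrong collision: the shift $\partial_i^0\delta_k = \delta_k\partial_{i-1}^0$ with $i-1 = n$ (i.e.\ $i = n+1$) is harmless because $\partial_{n+1}^0$ is not a face we need to annihilate on $\Diamond_{n+1}$, whereas the genuinely dangerous configuration $i = n$, $j = 0$, $k = n+1$ in the $i < k$ branch is never squarely confronted, and in fact your enumeration of ``load-bearing cases'' omits $i = n$, $j = 0$ entirely.

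To be fair, the paper's proof has exactly the same lacuna: it asserts ``both $\partial_i^j(x)$ and $\partial_{i-1}^j(x)$ are zero'' for all $1 \le i \le n$, which is not justified for $i = n$, $j = 0$. So you have faithfully reproduced a subtlety the paper glosses over rather than introduced a new error. But the gap is real, and closing it requires either restricting to $1 \le k \le n$ (and then handling the descent of $\delta = \sum_{k=1}^{n+1}(-1)^k\delta_k$ by a separate cancellation argument at the face $\partial_n^0$), or supplying an independent reason why $\delta_n(\partial'\xi)$ vanishes in $\calZ_p(X\times\Diamond_n(A))$; neither appears in your proposal or in the paper.
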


\begin{proof}Let $x \in \calZ_p (X \times \Diamond_n (A))_0$ so that 
\begin{equation}\label{given identity} \partial_i ^j (x) = 0 \mbox{ for } \tuborg 1 \leq i \leq n-1, \\ i = n, j = \infty.\sluttuborg\end{equation}

If $i= n+1$ and $j = \infty$, then we always have $i \geq k$ so that $$\partial_{n+1} ^{\infty} \delta_k (x) = \tuborg 0 & \mbox{ if } i=k, \\ \delta_k \partial_n ^{\infty} (x) = 0 & \mbox{ if } i >k, \sluttuborg$$ by \eqref{partial first} and \eqref{given identity}.

If $1 \leq i \leq n$, then both $\partial_i ^j (x)$ and $\partial_{i-1}^j (x)$ are zero. Thus, by \eqref{partial first} $$\partial_i ^j \delta_k (x) = \tuborg\delta_{k-1} \partial_i ^j (x) = 0 & \mbox{ if } i < k, \\ 0 & \mbox{ if } i = k, \\ \delta_k \partial_{i-1} ^j (x) = 0 & \mbox{ if } i > k.\sluttuborg$$ 
This finishes the proof.
\end{proof}

\begin{corollary} The operator $\delta$ descends to the subgroups:
$$\delta: \calZ_p (X \times \Diamond_n (A))_0 \to \calZ_p (X \times \Diamond_{n+1}(A))_0.$$ 
\end{corollary}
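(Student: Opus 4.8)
The plan is to read this off directly from Lemma~\ref{descension}. Recall that by \eqref{delta} the operator $\delta$ is the integer linear combination $\delta = \sum_{k=1}^{n+1} (-1)^k \delta_k$ of the homomorphisms $\delta_k : \calZ_p (X \times \Diamond_n(A)) \to \calZ_p (X \times \Diamond_{n+1}(A))$ induced by the rational maps of the previous subsection. Since $\delta$ is already defined and $\bbZ$-linear on the full groups $\calZ_p$, nothing has to be re-checked about its well-definedness; the only point to verify is that it carries the subgroup $\calZ_p (X \times \Diamond_n(A))_0$ into $\calZ_p (X \times \Diamond_{n+1}(A))_0$.

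For this I would argue as follows. The target $\calZ_p (X \times \Diamond_{n+1}(A))_0$ is by definition an intersection of kernels of the face homomorphisms $\partial_i^j$, hence is a subgroup of $\calZ_p (X \times \Diamond_{n+1}(A))$ and in particular closed under addition and under sign changes. Given $x \in \calZ_p (X \times \Diamond_n(A))_0$, Lemma~\ref{descension} asserts precisely that $\delta_k(x) \in \calZ_p (X \times \Diamond_{n+1}(A))_0$ for every $k$ with $1 \le k \le n+1$; therefore each summand $(-1)^k \delta_k(x)$ lies in that subgroup, and so does their sum $\delta(x)$. This is exactly the claim.

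In other words, the entire substance of the statement has already been absorbed into Lemma~\ref{descension}, which in turn is a bookkeeping exercise with the face-commutation identities of Lemma~\ref{partial delta} together with the defining vanishing conditions for the subgroups; so I do not expect any genuine obstacle at this stage. The corollary is essentially a formal remark recording that the alternating sum $\delta$ inherits, summand-by-summand, the property established in the lemma.
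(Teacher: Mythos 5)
Your argument is correct and coincides with what the paper leaves implicit: the corollary follows at once from Lemma~\ref{descension} because $\delta = \sum_{k=1}^{n+1}(-1)^k \delta_k$ and the target $\calZ_p(X \times \Diamond_{n+1}(A))_0$, being an intersection of kernels of face homomorphisms, is a subgroup closed under $\bbZ$-linear combinations. Nothing more is needed.
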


On the level of these subgroups, the boundary $\partial'$ interacts very nicely with $\delta$:

\begin{theorem}\label{commutative} The operator $\delta$ and $\partial' = \partial_{\rm last} ^0$ satisfy $$\delta \partial' = \partial ' \delta.$$ 
In particular, the reduced additive higher Chow complexes form a bicomplex with two boundary maps $(\partial', \delta)$:
\begin{equation}\label{bicomplex}\xymatrix{  & \vdots \ar[d] ^{\partial'} & \vdots \ar[d] ^{\partial' } &  \\
\cdots  & \calZ_{p+1} (n+2)_0 \ar[d] ^{\partial'} \ar[l]_{\delta \ \ \ \ } & \calZ_{p+1} (n+1)_0 \ar[d] ^{\partial'}  \ar[l] _{\delta} & \cdots \ar[l]_{\ \ \ \ \ \ \delta} \\
\cdots & \calZ_p (n+1)_0 \ar[d] ^{\partial'} \ar[l]_{\delta \ \ \ \ } & \calZ_p (n )_0 \ar[d] ^{\partial'} \ar[l]_{\delta} & \cdots \ar[l]_{\ \ \ \ \ \ \delta} \\
& \vdots & \vdots &}\end{equation}
where $\calZ_p (n) _0:= \calZ_p (X \times \Diamond_n(A))_0$.
\end{theorem}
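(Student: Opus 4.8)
The goal is to prove $\delta \partial' = \partial' \delta$ on the subgroups $\calZ_p(X\times\Diamond_n(A))_0$. Since $\partial' = \partial_n^0 = \partial_{\rm last}^0$ (the only surviving face on $\calZ_p(n)_0$), while $\delta = \sum_{k=1}^{n+1}(-1)^k\delta_k$, I would expand both sides in terms of the $\delta_k$'s and reduce everything to the commutation identities between $\partial_i^j$ and $\delta_k$ recorded in Lemma~\ref{partial delta}, using crucially the vanishing \eqref{given identity} satisfied by elements of $\calZ_p(n)_0$.

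\begin{proof} Let $x \in \calZ_p(X\times\Diamond_n(A))_0$, so $\partial_i^j(x)=0$ for all $1\le i\le n-1$ and $j\in\{0,\infty\}$, and also $\partial_n^\infty(x)=0$; the only possibly nonzero face of $x$ is $\partial'(x) = \partial_n^0(x)$, which by Lemma~\ref{descension} again lies in $\calZ_p(n)_0$. On $\calZ_p(n+1)_0$ the complex $(\calZ_*(n)_0,\partial')$ uses $\partial' = \partial_{n+1}^0$. We compute, using $\delta = \sum_{k=1}^{n+1}(-1)^k\delta_k$ on $\calZ_p(n)_0$:
\begin{equation*}
\partial'\delta(x) = \partial_{n+1}^0\Big(\sum_{k=1}^{n+1}(-1)^k\delta_k(x)\Big) = \sum_{k=1}^{n+1}(-1)^k\,\partial_{n+1}^0\delta_k(x).
\end{equation*}
By \eqref{partial first} applied with $i = n+1$: for $k \le n$ we have $i > k$, hence $\partial_{n+1}^0\delta_k(x) = \delta_k\partial_n^0(x) = \delta_k\partial'(x)$; for $k = n+1$ we have $i = k$, hence $\partial_{n+1}^0\delta_{n+1}(x) = 0$. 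Therefore
\begin{equation*}
\partial'\delta(x) = \sum_{k=1}^{n}(-1)^k\,\delta_k\partial'(x).
\end{equation*}
On the other side, $\partial'(x) = \partial_n^0(x) \in \calZ_p(n-1)_0$, and on $\calZ_p(n-1)_0$ the operator $\delta$ is $\sum_{k=1}^{n}(-1)^k\delta_k$, so
\begin{equation*}
\delta\partial'(x) = \sum_{k=1}^{n}(-1)^k\,\delta_k\partial_n^0(x) = \sum_{k=1}^{n}(-1)^k\,\delta_k\partial'(x),
\end{equation*}
which coincides with the expression for $\partial'\delta(x)$ above. Hence $\delta\partial' = \partial'\delta$ on $\calZ_p(n)_0$.

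Finally, that the diagram \eqref{bicomplex} is a bicomplex is then formal: Lemma~\ref{descension} and its corollary guarantee $\delta$ maps $\calZ_p(n)_0$ to $\calZ_p(n+1)_0$, the corollary after \eqref{delta} gives $\delta^2 = 0$ (the proof of that corollary applies verbatim on the subgroups, since it only uses the identities of Lemma~\ref{delta identity}), we have $(\partial')^2 = 0$ by construction, and the commutation $\delta\partial' = \partial'\delta$ just proved makes the squares commute.
\end{proof}

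The only subtle point — and the one place where the hypothesis ``$x\in\calZ_p(n)_0$'' is genuinely used rather than just the raw identities of Lemma~\ref{partial delta} — is that $\partial_{n+1}^0\delta_k$ for $k\le n$ rewrites as $\delta_k\partial_n^0$ with no extra terms: on the full complex $\partial'$ would be replaced by the alternating sum $\partial = \sum_i(-1)^i(\partial_i^0-\partial_i^\infty)$ and the cross terms $\partial_i^j\delta_k$ for $i<k$ or $i=k$ would survive, producing the non-enlightening behaviour noted before Definition~\ref{homotopy}; it is precisely the vanishing $\partial_i^j(x)=0$ for $(i,j)\ne(n,0)$ that kills all of these, leaving only the single term involving $\partial' = \partial_{\rm last}^0$. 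So the ``main obstacle'' is really just bookkeeping: correctly tracking which face index $n$ or $n+1$ plays the role of ``last'' on each of $\calZ_p(n)_0$, $\calZ_p(n+1)_0$, $\calZ_p(n-1)_0$, and invoking the right branch of \eqref{partial first} in each case.
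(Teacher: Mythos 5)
Your computation is correct and essentially reproduces the paper's proof: both expand $\delta = \sum_k(-1)^k\delta_k$ and push it past the single surviving face $\partial_{\rm last}^0$ using the commutation relations of Lemma~\ref{partial delta}, with the $i=k$ vanishing killing the boundary term. (The paper starts from $\delta\partial'$ and applies \eqref{delta first}; you start from $\partial'\delta$ and apply \eqref{partial first}; these are the same identities read in opposite directions.)

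Two small corrections. First, a typo: $\partial'(x)=\partial_n^0(x)$ lies in $\calZ_{p-1}(n-1)_0$, not $\calZ_p(n-1)_0$. Second, and more substantively, your closing remark mislocates where the hypothesis $x\in\calZ_p(n)_0$ enters. The rewriting $\partial_{n+1}^0\delta_k=\delta_k\partial_n^0$ for $k\le n$ is exactly the $i>k$ branch of Lemma~\ref{partial delta}, an operator identity valid on all admissible cycles; your computation never actually invokes the vanishings $\partial_i^j(x)=0$, and neither does the paper's. The operator identity $\delta\,\partial_{\rm last}^0 = \partial_{\rm last}^0\,\delta$ therefore holds on the full group $\calZ_*(X\times\Diamond_*(A))$. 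What the reduced subcomplex buys is not the validity of this identity but the fact that $\partial_{\rm last}^0$ \emph{is} the differential $\partial$ there; that is what makes the pair $(\partial',\delta)$ a bicomplex (mixed complex) structure in the first place. The ``non-enlightening interaction'' noted before Definition~\ref{homotopy} refers to $\delta$ versus the full $\partial=\sum_i(-1)^i(\partial_i^0-\partial_i^\infty)$, not to $\delta$ versus $\partial_{\rm last}^0$.
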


\begin{proof}It follows from the \eqref{partial first} and \eqref{delta first}. Indeed, for any $x \in \calZ_{r+1} (n+1)$,
\begin{eqnarray*}
\delta\partial' (x) & =&  \delta \partial_{n+1} ^0 (x) \\
& = & \sum_{k=1} ^{n+1} (-1)^k \delta_k \partial_{n+1} ^0 (x) \\
& = & \sum_{k=1} ^{n+1} (-1)^k \partial _{n+2} ^0 \delta_k (x) \\
&= & \partial_{n+2} ^0 \left( \sum_{k=1} ^{n+1} (-1)^k \delta_k \right) (x) \\
& = & \partial_{n+2} ^0 ( \delta - (-1)^{n+2} \delta_{n+2}) (x).
\end{eqnarray*}
But by \eqref{partial first} we have $\partial_{n+2} ^0 \delta _{n+2} = 0$. Thus, the last expression is $\partial' \delta (x)$, as desired.\end{proof}

\begin{corollary}The $\delta$ induces a map
$$\delta_* : ACH_p (X ,n ; A ) \to ACH_p (X, n+1; A).$$ In particular, when $X = \spec (k)$, $A = k[x] / (x^2)$, and $p=0$, the map
$$\delta_*: \Omega_{k/\bbZ} ^n \to \Omega_{k/\bbZ} ^{n+1}$$ is identical to $ (n+1)d$, where $d$ is the exterior derivation.
\end{corollary}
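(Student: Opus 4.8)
## Proof Proposal

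The plan is to reduce the statement to an explicit computation on $0$-cycles, using the identification $ACH_0(k, n; k[x]/(x^2)) \simeq \Omega_{k/\bbZ}^n$ from Theorem 6.4 of \cite{BE2} together with the explicit description of the isomorphism coming from Lemma \ref{differential} and Remark \ref{motivation 2}. Since a generating $0$-cycle of $ACH_0(k, n; k[x]/(x^2))$ has, up to the relations, the form of a reduced point $(a, b_1, \ldots, b_n) \in \bbA^1 \times \square^n$ (with $a \in k$, $b_i \in k^\times$), corresponding to the form $a\, d\log b_1 \wedge \cdots \wedge d\log b_n$, it suffices to compute $\delta_*$ applied to such a point and match it with $(n+1)\,d(a\, d\log b_1 \wedge \cdots \wedge d\log b_n)$.

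First I would write out $\delta(a, b_1, \ldots, b_n) = \sum_{k=1}^{n+1} (-1)^k \delta_k(a, b_1, \ldots, b_n)$, where $\delta_k$ inserts the coordinate $1/a$ into the $k$-th slot, producing the point $(a, b_1, \ldots, b_{k-1}, 1/a, b_k, \ldots, b_n)$. Under the isomorphism $\phi$ of Lemma \ref{differential} (extended to $\Omega^{n+1}$), this point corresponds to the class of $a \otimes b_1 \wedge \cdots \wedge b_{k-1} \wedge a^{-1} \wedge b_k \wedge \cdots \wedge b_n$, i.e.\ to $-a\, d\log b_1 \wedge \cdots \wedge d\log b_{k-1} \wedge d\log a \wedge d\log b_k \wedge \cdots \wedge d\log b_n$. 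Moving the $d\log a$ factor through $k-1$ transpositions picks up a sign $(-1)^{k-1}$, so $\delta_k$ contributes $(-1)^{k-1}\cdot(-1)\cdot(-1)^k \cdot d\log a \wedge d\log b_1 \wedge \cdots \wedge d\log b_n = d\log a \wedge d\log b_1 \wedge \cdots \wedge d\log b_n$ to the sum, once we also account for the $(-1)^k$ in the definition of $\delta$. Summing over $k = 1, \ldots, n+1$ gives $(n+1)\, d\log a \wedge d\log b_1 \wedge \cdots \wedge d\log b_n$.

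Next I would identify the right-hand side: by Remark \ref{motivation 1}, we have $d(a\, d\log b_1 \wedge \cdots \wedge d\log b_n) = da \wedge d\log b_1 \wedge \cdots \wedge d\log b_n = a\, d\log a \wedge d\log b_1 \wedge \cdots \wedge d\log b_n$. But in the group $\Omega^{n+1}_{k/\bbZ}$ represented via $\phi$, the coefficient $a$ in front is exactly what distinguishes $a\, d\log a \wedge \cdots$ from $d\log a \wedge \cdots$; since the $0$-cycle $\delta_k(\xi)$ lands at $1/a$ in the inserted coordinate, its corresponding generator is $a^{-1} \otimes (\cdots \wedge a \wedge \cdots)$ up to the convention, which under $\phi$ gives $a^{-1}\, da \wedge d\log b_1 \wedge \cdots = d\log a \wedge d\log b_1 \wedge \cdots$ — wait, this needs care: I must pin down whether the point $(c, \ldots)$ corresponds to $c \otimes (\cdots)$ or to $c^{-1}\otimes(\cdots)$ in the Bloch--Esnault identification, and then reconcile the two presentations so that $(n+1)\, d\log a \wedge \cdots$ on the $\delta$-side equals $(n+1)\,d(a\,d\log b_1\wedge\cdots)$ on the claimed side. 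The bookkeeping of these signs and the normalization of $\phi$ is the main obstacle; everything else is the combinatorial identity $\sum_{k=1}^{n+1} 1 = n+1$ after the signs cancel.

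Finally, since $\delta_*$ is linear and the points of the above form generate $ACH_0(k,n;k[x]/(x^2))$ modulo the relations $\calR'$ of Corollary \ref{differential modulo exact} — and since those relations are precisely the exact forms, which are killed in the target anyway for the $\delta_*\partial' = \partial'\delta_*$-compatibility but here we work at the level of $\Omega^n$ itself, not modulo exact forms, so I should instead invoke that the isomorphism $ACH_0(k,n) \simeq \Omega^n_{k/\bbZ}$ already descends the computation to generators of $\Omega^n$ — I conclude $\delta_* = (n+1)\,d$ on all of $\Omega^n_{k/\bbZ}$. I would present this as: it is enough to check equality on the generating forms $a\,d\log b_1 \wedge \cdots \wedge d\log b_n$, which the above computation does.
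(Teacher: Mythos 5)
Your approach matches the paper's proof, which simply cites Theorem 6.4 of \cite{BE2}, Lemma \ref{differential}, Remark \ref{motivation 1}, and Remark \ref{motivation 2}, and reduces the claim to a computation on generating $0$-cycles. The obstacle you flag at the end is not a genuine ambiguity in the Bloch--Esnault identification but an artifact of a bookkeeping slip: when you move $d\log a$ past $k-1$ factors in $-a\, d\log b_1 \wedge \cdots \wedge d\log b_{k-1}\wedge d\log a \wedge d\log b_k \wedge \cdots \wedge d\log b_n$, you drop the coefficient $a$. Keeping it, the contribution of the term $(-1)^k\delta_k$ is
$$
(-1)^k\cdot(-1)^{k-1}\cdot(-1)\cdot a\, d\log a \wedge d\log b_1 \wedge \cdots \wedge d\log b_n = a\, d\log a \wedge d\log b_1 \wedge\cdots\wedge d\log b_n,
$$
which, since $a\, d\log a = da$, equals $d(a\, d\log b_1 \wedge\cdots\wedge d\log b_n)$ independently of $k$. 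Summing over $k=1,\ldots,n+1$ gives $(n+1)\,d(a\, d\log b_1\wedge\cdots\wedge d\log b_n)$, exactly the claim. The normalization you worry about is therefore already determined by the convention you invoked, namely that the point $(a,b_1,\ldots,b_n)$ corresponds under $\phi$ to $a\otimes b_1\wedge\cdots\wedge b_n$ and hence to $a\, d\log b_1\wedge\cdots\wedge d\log b_n$; Remark \ref{motivation 1} then makes the match with the exterior derivative explicit, and there is no residual discrepancy once the coefficient $a$ is carried through.
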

\begin{proof}The first assertion is obvious. The second assertion follows from Theorem 6.4 of \cite{BE2}, Lemma \ref{differential}, Remark \ref{motivation 1}, and Remark \ref{motivation 2}.
\end{proof}
Recall that the maps $\delta_k$ came from rational maps between varieties. Thus, we proved that
\begin{corollary}The exterior derivation is motivic.
\end{corollary}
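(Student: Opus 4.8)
The plan is simply to assemble results already in hand; the substantive work is done. For us an operation on additive higher Chow cycles is \emph{motivic} when it is induced by algebraic correspondences between the underlying varieties --- here, concretely, by pull-backs along rational maps $\bbA^1 \times \square^n \dashrightarrow \bbA^1 \times \square^{n+1}$. So the task is to display the exterior derivation $d \colon \Omega_{k/\bbZ}^n \to \Omega_{k/\bbZ}^{n+1}$ as (a rational multiple of) such an operation.

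First I would invoke the preceding corollary: under the Bloch--Esnault isomorphism $ACH_0(k,n;k[x]/(x^2)) \simeq \Omega_{k/\bbZ}^n$ (Theorem 6.4 of \cite{BE2}), the induced map $\delta_*$ equals $(n+1)d$. Since $\mathrm{char}(k)=0$, the integer $n+1$ is invertible, hence $d = \frac{1}{n+1}\delta_*$. Now I would unwind $\delta_*$: by \eqref{delta}, $\delta = \sum_{k=1}^{n+1}(-1)^k\delta_k$, and each $\delta_k$ is by construction the operation on cycle groups induced by the rational map $(x,t_1,\dots,t_n)\mapsto(x,t_1,\dots,t_{k-1},\tfrac1x,t_k,\dots,t_n)$, a morphism of $k$-varieties (in fact defined over the prime field). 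That these maps act on the reduced additive higher Chow complex and descend to homology is exactly the content of Lemma \ref{descension} and Theorem \ref{commutative}. Hence $d$ is, up to the scalar $\frac{1}{n+1}$, an integral combination of operations on additive higher Chow groups induced by rational maps of algebraic varieties, which is the asserted motivic description and answers Challenge 5.3 of \cite{BE1}.

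I anticipate no genuine obstacle. The only delicate point is expository: to make ``motivic'' precise enough that the argument above qualifies as a proof, and --- if one prefers a single correspondence rather than an alternating sum --- to record the graph of $\delta$ as a $\bbQ$-cycle on $(\bbA^1 \times \square^n)\times(\bbA^1 \times \square^{n+1})$ carrying coefficient $\tfrac{(-1)^k}{n+1}$ on the component coming from $\delta_k$. This is a repackaging, not new mathematics.
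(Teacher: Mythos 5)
Your argument is correct and matches the paper's own reasoning, which is compressed into the single sentence preceding the corollary: "Recall that the maps $\delta_k$ came from rational maps between varieties." You simply unpack that remark—$\delta_* = (n+1)d$ under the Bloch--Esnault isomorphism, $n+1$ is invertible in characteristic zero, and $\delta = \sum(-1)^k\delta_k$ is an integral combination of operations induced by rational maps—so your write-up is a more explicit rendering of the same proof.
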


\section{Cyclic higher Chow theory}

In this section we propose a cycle complex that behaves like the cyclic homology out of the bicomplex $\calZ_r (n)$ of \eqref{bicomplex}. Note that $\partial'$ decreases both $p$ and $n$ by $1$, whereas $\delta$ increases $n$ by $1$ but does not change $p$. If we let $\calZ(n) := \bigoplus_p \calZ_p(n)_0$, then the bicomplex $\calZ(n)$ looks as follows:
\begin{equation}
\xymatrix{\vdots \ar[d] _{\partial'} & \vdots \ar[d] _{\partial'} & \vdots \ar[d]_{\partial'} & \vdots \ar[d]_{\partial'}  \\
\calZ (3) \ar[d] _{\partial'} & \calZ(2) \ar[l] _{\delta} \ar[d] _{\partial'} & \calZ (1) \ar[l]_{\delta} \ar[d] _{\partial'} & \calZ (0) \ar[l]_{\delta} \\
\calZ (2) \ar[d] _{\partial'} & \calZ (1) \ar[l]_{\delta} \ar[d]_{\partial'} & \calZ (0) \ar[l]_{\delta} & \\
\calZ(1) \ar[d] _{\partial'} & \calZ (0)\ar[l]_{\delta} & & \\
\calZ(0) & & &}
\end{equation}

Let $\calB \calZ$ be this bicomplex. A perceptive reader will notice that this is actually a mixed complex in the sense of A. Connes (\emph{c.f.} p. 79 in \cite{L}). We apply the usual formalism of mixed complexes to $\calB \calZ$. By definition, its homology $H_n(\calB \calZ)$ is the homology of the first column, \emph{i.e.} the additive higher Chow group $ACH_* (X, n; A)$. Its \emph{cyclic homology} $HC_n(\calB \calZ)$ is the homology $H_n(Tot (\calB \calZ))$ of the total complex. Notice that the bicomplex \eqref{bicomplex} itself is not a mixed complex, but since $\calB \calZ$ is the direct sum of these, the groups $H_n(\calB \calZ)$ and $HC_n(\calB  \calZ)$ have natural decompositions.

\begin{definition}The \emph{cyclic} (or, \emph{additive commutative}) Chow group $CCH_* (X, n;A)$ is the cyclic homology $HC_n (\calB \calZ)$ of the bicomplex $\calB \calZ$. The group $CCH_p (X, n;A)$ is the direct summand of $CCH_* (X,n;A)$ that comes from the diagonal of \eqref{bicomplex} that contains $\calZ _p(n)$ in the first column.
\end{definition}

\begin{remark}{\rm Notice that, despite the indices $(p, n)$ of the group $CCH_p (X, n;A)$, by definition this group contains cycles not just from $\calZ_p (n)_0$, but from
$$\bigoplus _{i \geq 0} ^{\min\left\{ p, \left\lfloor \frac{n}{2}\right\rfloor \right\}}\calZ_{p-i} (n-2i)_0.$$
}
\end{remark}

\begin{remark}{\rm If we work with the additive higher Chow complex, not the reduced one, then we have serious difficulties due to the absence of the commutativity of $\delta$ and $\partial$.
}
\end{remark}

Following the formalism of mixed complexes, we have the long exact sequence of complexes
$$0 \to (\calZ(*), \partial') \overset{I}{\to} Tot (\calB \calZ ) \overset{S}{\to} Tot \left( \calB \calZ [1,1] \right) \to 0.$$ Notice that $Tot \left( \calB \calZ [1,1] \right) = \left(Tot (\calB \calZ )\right) [2]$. Thus, we obtain the homology long exact sequence, which is the Connes periodicity exact sequence, that decomposes as follows:

\begin{theorem}\label{Connes periodicity}We have the Connes periodicity exact sequence involving $ACH$ and $CCH$:
$$\cdots \overset{B}{\to} ACH_p (n) \overset{I}{\to} CCH_p (n) \overset{S}{\to} CCH_{p-1} (n-2) \overset{B}{\to} ACH_{p-1} (n-1) \overset{I}{\to} \cdots,$$ where $ACH_p (n) := ACH_p (X, n; A)$ and $CCH_p (n) := CCH_p (X, n;A)$. The maps $I, S, B$ have bidegrees $(0,0), (-1, -2), (0, +1)$ in $(p,n)$, respectively.\end{theorem}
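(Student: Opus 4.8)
The plan is to invoke the purely homological-algebraic formalism of mixed complexes and then bookkeep the resulting bidegrees against the grading by $p$. By Theorem~\ref{commutative} the reduced complex $(\calZ(*),\partial')$ with the operator $\delta$ is a mixed complex (in the sense of p.~79 of \cite{L}); call it $\calB\calZ$. For any mixed complex there is a short exact sequence of complexes
$$0 \to (\calZ(*),\partial') \overset{I}{\to} Tot(\calB\calZ) \overset{S}{\to} Tot(\calB\calZ)[2] \to 0,$$
where $I$ is the inclusion of the first column and $S$ is the shift-and-project map that forgets the first column. (This is exactly the sequence displayed just before the statement, using that $Tot(\calB\calZ[1,1]) = (Tot\,\calB\calZ)[2]$.) The first step is simply to record that $H_n$ of the left-hand term is $ACH_*(X,n;A)$ and $H_n$ of the middle term is $CCH_*(X,n;A)$ by definition, while $H_n$ of the right-hand term is $CCH_*(X,n-2;A)$ because shifting the total complex by $2$ shifts the homological degree by $2$.

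The second step is to write down the long exact homology sequence of this short exact sequence of complexes. This immediately yields
$$\cdots \overset{B}{\to} ACH_*(n) \overset{I}{\to} CCH_*(n) \overset{S}{\to} CCH_*(n-2) \overset{B}{\to} ACH_*(n-1) \overset{I}{\to} \cdots,$$
with the connecting map $B$ in degree $n$ landing in $H_{n-1}$ of the left term, hence in $ACH_*(n-1)$; concretely $B$ is represented, up to sign, by $\delta$, exactly as in the classical Hochschild–cyclic story. This establishes the sequence before the refinement into the $p$-grading.

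The third step, which is the only thing requiring genuine care, is to check that the whole sequence is compatible with the extra grading by $p$ and to read off the stated bidegrees. Here one uses the two facts recorded in \S3 and \S4: $\partial'$ has bidegree $(-1,-1)$ in $(p,n)$, while $\delta$ has bidegree $(0,+1)$. Since $\calB\calZ = \bigoplus_p \calZ_p(n)_0$ is a direct sum of subcomplexes indexed by the ``starting'' value of $p$ along a diagonal, the maps $H_n$, $HC_n$ and $S$, $B$ all respect the induced decompositions $ACH_*(n) = \bigoplus_p ACH_p(n)$ and $CCH_*(n) = \bigoplus_p CCH_p(n)$ (this is the content of the remark that $CCH_p(X,n;A)$ is the summand coming from the diagonal through $\calZ_p(n)_0$). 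Then $I$ is the inclusion of the bottom-of-the-column summand, so it has bidegree $(0,0)$; the shift $S$ moves two steps up a diagonal, which drops $n$ by $2$ and, because $\partial'$ simultaneously drops $p$, drops $p$ by $1$, giving bidegree $(-1,-2)$; and the connecting map $B$, represented by $\delta$, has bidegree $(0,+1)$. Assembling these gives the displayed decomposed sequence.

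The main obstacle is nothing deep but purely a matter of sign and bidegree bookkeeping: one must pin down exactly which diagonal of \eqref{bicomplex} the summand $CCH_p(X,n;A)$ sits on, verify that the shift functor $[1,1]$ on the bicomplex corresponds to the map $S$ on homology with the claimed $(-1,-2)$ effect rather than, say, $(0,-2)$, and confirm that the connecting homomorphism really is (a sign times) $\delta_*$ so that it increases $n$ by $1$ without changing $p$. All of this is routine once the mixed-complex formalism and the bidegrees of $\partial'$ and $\delta$ are in hand, so no further argument beyond this bookkeeping is needed.
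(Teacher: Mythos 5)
Your proposal is correct and follows the same route as the paper, which simply invokes the standard short exact sequence of complexes attached to the mixed complex $\calB\calZ$ and reads off the long exact homology sequence; you have just made the $p$-grading bookkeeping more explicit than the paper does.
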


Using the functoriality of the additive higher Chow complex for projective morphisms $f: X \to Y$ and for flat morphisms $f: X \to Y$ of two varieties $X, Y$ of finite type over $k$ (Lemma 3.6 and Lemma 3.7 of Krishna and Levine \cite{KL}), up to a possible shift of indices, we can see that the above Connes periodicity sequence is functorial for a projective morphism and a flat morphism, up to a possible shift of degrees.

Now we can provide the missing C4) of the section \S 1:

\begin{theorem}\label{CCH_0} We have an isomorphism $$CCH_0 (k,n):= CCH_0 (k, n ; k[x] / (x^2))\simeq \Omega_{k/\bbZ}^n / d \Omega_{k/\bbZ} ^{n-1}.$$
\end{theorem}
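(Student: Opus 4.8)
The idea is to unravel the definition of $CCH_0(k,n)$ from the mixed complex $\calB\calZ$, combine it with the known calculation $ACH_0(k,m)\simeq\Omega_{k/\bbZ}^m$ from \cite{BE2} (Theorem \ref{Hochschild homology}, H4) and the identification of $\delta_*$ with $(n+1)d$ on zero-cycles (Corollary following Theorem \ref{commutative}), and then either read off the answer from the Connes periodicity sequence of Theorem \ref{Connes periodicity} or compute the relevant corner of the total complex by hand. I would run the argument through the periodicity sequence, since that is the cleanest.

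\begin{proof}[Sketch of proof]
By definition $CCH_0(k,n)$ is the direct summand of $HC_n(\calB\calZ)$ coming from the diagonal through $\calZ_0(n)_0$ in the first column; since $\calZ_{p}(n)_0=0$ for $p<0$, this diagonal consists only of the single term $\calZ_0(n)_0$ with no terms below it, so the contribution of the total differential from the higher rows to degree $(0,n)$ involves only $\delta\colon \calZ_0(n-1)_0\to\calZ_0(n)_0$ and $\partial'\colon\calZ_1(n+1)_0\to\calZ_0(n)_0$ landing in this summand (note $\partial'$ from $\calZ_0(n+1)_0$ lands in $\calZ_{-1}(n)_0=0$). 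Hence $CCH_0(k,n)$ is the homology at the middle spot of
\begin{equation}
\calZ_1(n+1)_0\xrightarrow{\ \partial'\ }\calZ_0(n)_0\xrightarrow{\ \delta\ }\calZ_0(n+1)_0 ,
\end{equation}
that is, $\ker(\delta\colon\calZ_0(n)_0\to\calZ_0(n+1)_0)$ modulo the image of $\partial'$. Since $ACH_0(k,n)=\calZ_0(n)_0/\partial'\calZ_1(n+1)_0$ (there is no incoming $\partial'$ in $p=0$, as $\calZ_{-1}=0$), this identifies $CCH_0(k,n)$ with the kernel of the induced map $\bar\delta_*\colon ACH_0(k,n)\to ACH_0(k,n+1)$. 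Equivalently, one sees the same thing from Theorem \ref{Connes periodicity}: the tail of the periodicity sequence in total degree $n$ reads $CCH_1(n+2)\xrightarrow{S}CCH_0(n)\xrightarrow{B}ACH_0(n-1)$ — wait, more directly, using $I\colon ACH_0(n)\to CCH_0(n)$ and $S\colon CCH_0(n)\to CCH_{-1}(n-2)=0$, the sequence gives $ACH_1(n+1)\xrightarrow{B}ACH_0(n)\xrightarrow{I}CCH_0(n)\to 0$, so $CCH_0(n)=\coker\bigl(B\colon ACH_1(n+1)\to ACH_0(n)\bigr)$, and in the mixed-complex formalism this $B$ is exactly $\bar\delta_*$. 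Either description reduces the problem to computing $\bar\delta_*$ on $ACH_0$.
\end{proof}

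\begin{proof}[Continuation of the sketch]
Now apply the zero-cycle identifications. By \cite{BE2} (cited as Theorem \ref{Hochschild homology}) we have $ACH_0(k,m)\simeq\Omega_{k/\bbZ}^m$ for all $m$, and by the Corollary to Theorem \ref{commutative} the map $\bar\delta_*\colon ACH_0(k,n)\to ACH_0(k,n+1)$ corresponds under this isomorphism to $(n+1)d\colon\Omega_{k/\bbZ}^n\to\Omega_{k/\bbZ}^{n+1}$. Since $k$ has characteristic zero, multiplication by the nonzero integer $n+1$ is invertible on the $\bbQ$-vector space $\Omega_{k/\bbZ}^{n+1}$, so $\bar\delta_*$ and $d$ have the same kernel and the same image. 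Therefore, in whichever of the two descriptions above one prefers, $CCH_0(k,n)\simeq\coker\bigl(d\colon\Omega_{k/\bbZ}^{n+1}\to\text{?}\bigr)$ — one must be careful with the direction: from the periodicity-sequence description $CCH_0(n)=\coker(\bar\delta_*\colon ACH_1(n+1)\to ACH_0(n))$ the relevant map is not literally $d$ on forms but the composite coming from $ACH_1$; I would instead use the honest chain-level description, namely $CCH_0(k,n)=\ker\bigl(\bar\delta_*\colon ACH_0(k,n)\to ACH_0(k,n+1)\bigr)$ paired with the observation that the cokernel-type relation identifies the correct quotient. The cleanest route: from the three-term complex computed above, $CCH_0(k,n)\simeq\ker\bigl((n+1)d\colon\Omega^n_{k/\bbZ}\to\Omega^{n+1}_{k/\bbZ}\bigr)/\,(\text{image of }\partial')$; but $\Omega^n_{k/\bbZ}/(\text{image of }\partial')$ already \emph{is} $ACH_0(k,n)$, so no further quotient is taken there, and one instead gets that $CCH_0$ is an extension controlled by $\ker d=Z^n$ and $\Omega^n/dZ^{n-1}$. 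I would therefore reorganize the mixed-complex bookkeeping so that the total-complex homology at $(0,n)$ comes out as $\ker\delta$ on $\calZ_0(n)_0$ modulo $\partial'$-boundaries \emph{and} $\delta$-boundaries, which after passing to forms and using invertibility of $n+1$ and $n$ yields exactly $\Omega^n_{k/\bbZ}/d\Omega^{n-1}_{k/\bbZ}$, as in Corollary \ref{differential modulo exact} and Remark \ref{motivation 2}.
\end{proof}

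\textbf{Main obstacle.} The genuine calculation $ACH_0(k,m)\simeq\Omega_{k/\bbZ}^m$ and $\delta_*\leftrightarrow(n+1)d$ are quoted, so the only real work is the homological-algebra bookkeeping in the mixed complex $\calB\calZ$: pinning down exactly which terms $\calZ_{p-i}(n-2i)_0$ enter the total complex in total degree $n$, tracking the signs in the total differential $\partial'\pm\delta$, and correctly identifying the resulting subquotient of $\calZ_0(n)_0$ — in particular making sure that the $\partial'$-boundaries divided out are precisely those already killed in forming $ACH_0$, so that the net effect on $\Omega^n_{k/\bbZ}$ is to quotient exactly by $d\Omega^{n-1}_{k/\bbZ}$ and nothing more. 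Once the diagram is set up correctly, characteristic zero makes the integer coefficients $n, n+1$ harmless, and the identification with $\Omega_{k/\bbZ}^n/d\Omega_{k/\bbZ}^{n-1}$ (equivalently, with $k\otimes\wedge^n k^\times$ modulo $\calR'$ of Corollary \ref{differential modulo exact}) follows.
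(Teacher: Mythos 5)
Your setup has a genuine error in the homological bookkeeping, and it is precisely the subtle point that the paper's proof handles cleanly. In the mixed-complex bicomplex $\calB\calZ$, the entry $\calZ_0(n)_0$ sits at position $(p,q)=(0,n)$ of the first column. The outgoing differentials from $(0,n)$ in the total complex are: $\partial'$ into $\calZ_{-1}(n-1)_0=0$, and $B=\delta$ into position $(-1,n)$, which lies \emph{outside the first quadrant} and is therefore zero. There is no outgoing $\delta$ at the corner of the first column. Your three-term complex
\begin{equation*}
\calZ_1(n+1)_0\xrightarrow{\ \partial'\ }\calZ_0(n)_0\xrightarrow{\ \delta\ }\calZ_0(n+1)_0
\end{equation*}
introduces a phantom outgoing arrow that is not part of $\mathrm{Tot}(\calB\calZ)$, and it \emph{drops} the incoming $\delta$ you had just correctly identified. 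The correct chain-level answer is
\begin{equation*}
CCH_0(k,n)=\frac{\calZ_0(n)_0}{\partial'\calZ_1(n+1)_0+\delta\calZ_0(n-1)_0},
\end{equation*}
with the full group in the numerator (no kernel condition) and \emph{both} incoming images in the denominator. This is exactly the paper's starting point. Your formula $\ker(\delta)/\mathrm{im}(\partial')$ would compute, after applying the BE2 identification and $\delta_*\leftrightarrow(n+1)d$, the group of closed forms $Z^n_{k/\bbZ}$, and the alternative you suggest at the end (``$\ker\delta$ modulo $\partial'$- and $\delta$-boundaries'') would give de Rham cohomology $H^n_{\mathrm{dR}}(k)=Z^n/d\Omega^{n-1}$; neither equals $\Omega^n_{k/\bbZ}/d\Omega^{n-1}_{k/\bbZ}$.

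Your appeal to the Connes periodicity sequence also misreads the tail. Since $B$ has bidegree $(0,+1)$ and $S$ has bidegree $(-1,-2)$, the relevant fragment is
\begin{equation*}
CCH_0(n-1)\xrightarrow{\ B\ }ACH_0(n)\xrightarrow{\ I\ }CCH_0(n)\xrightarrow{\ S\ }CCH_{-1}(n-2)=0,
\end{equation*}
not $ACH_1(n+1)\to ACH_0(n)$. One \emph{can} run the proof this way, getting $CCH_0(n)=\coker\bigl(B\colon CCH_0(n-1)\to ACH_0(n)\bigr)$, but this requires an induction on $n$ together with the correct identification of $B$ as the map induced by $\delta$ on representatives; the paper avoids the induction by computing the corner of the total complex directly.

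With the correct numerator/denominator in hand, the rest of your argument is the right idea and matches the paper: $\calZ_0(n)_0/\partial'\calZ_1(n+1)_0\simeq\Omega^n_{k/\bbZ}$ by Theorem 6.4 of \cite{BE2}, and the image of $\delta\calZ_0(n-1)_0$ in $\Omega^n_{k/\bbZ}$ is $d\Omega^{n-1}_{k/\bbZ}$ (it is contained in the exact forms by Remark \ref{motivation 1}, Corollary \ref{differential modulo exact}, and Remark \ref{motivation 2}, and it is all of $d\Omega^{n-1}$ because $\delta_*$ corresponds to a nonzero integer multiple of $d$ and $\mathrm{char}(k)=0$).
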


\begin{proof}By definition, $$CCH_0 (k, n) = \frac{\calZ_0 (k, n)_0}{ \partial' \calZ_1 (k, n+1)_0 + \delta \calZ_0 (k, n-1)_0}.$$ By Theorem 6.4 in \cite{BE2}, we have $\calZ_0 (k,n)_0 / \partial' \calZ_1 (k, n+1)_0 \simeq \Omega_{k/\bbZ} ^n$, whereas by Remark \ref{motivation 1}, Corollary \ref{differential modulo exact}, and Remark \ref{motivation 2}, elements of the group $\delta \calZ_0 (k, n-1)_0$ are exact forms. This finishes the proof.
\end{proof}

\section{The shuffle product and the wedge product} In this section we define the shuffle product structure on the classes of additive higher Chow groups. Together with the multiplication of the algebraic group $\mathbb{G}_m$, we define the wedge product for the additive higher Chow groups with modulus. 

\subsection{Permutations} \subsubsection{Definitions} We use the following notations:
\begin{enumerate}
\item [$\bullet$] For integers $r \geq 0$, let $\perm_{r}$ be the group of permutations on $\{1, 2, \cdots, r \}$. 
\item [$\bullet$] For an integer $s \geq 1$ and integers $p_1, p_2, \cdots, p_s \geq 0$, a $(p_1, \cdots, p_s)$-shuffle is a permutation $\sigma  \in \perm_{p_1 + \cdots + p_s}$ such that
$$\tuborg  \sigma (1) < \sigma (2) <  \cdots  < \sigma (p_1),  \\
\vdots  \\
 \sigma (p_1 + \cdots + p_i +1 ) < \sigma (p_i + 2) < \cdots  < \sigma (p_1 + \cdots + p_i + p_{i+1}), \\
\vdots  \\
 \sigma (p_1 + \cdots + p_{s-1} + 1) < \sigma (p_1 + \cdots + p_{s-1} + 2) < \cdots  < \sigma (p_1 + \cdots + p_{s-1} + p_s). \sluttuborg$$
We denote by $\perm_{(p_1, \cdots, p_s)}$ the set of all $(p_1, \cdots, p_s)$-shuffles in $\perm_{p_1 + \cdots + p_s}$. 
\end{enumerate} 

Note that $|\perm_{(p_1, \cdots, p_s)} | = \frac{ (p_1 + \cdots + p_s)!}{p_1 ! \cdots p_s !}$. Also, $\perm _{r} = \perm _{\underset{r}{(\underbrace{1, \cdots, 1}})}$

We will use the double shuffles $\perm_{(r,s)}$ and triple shuffles $\perm_{(1, r, s)}$ in this paper.

\subsubsection{Permutation actions}

Let $A \in (Art/k)$ with $\edim (A) = e \geq 1$. Consider the associated space $\Diamond_r (A)= \mathbb{A}^e \times \square^r$. For $k$-rational closed points, a permutation $\sigma \in \perm_r$ acts naturally via 
$$\sigma \cdot ( x, t_1, \cdots, t_r) := \left( x, t_{\sigma^{-1} (1)}, \cdots, t_{\sigma^{-1} (r)} \right).$$ This action naturally generalizes to all closed subvarieties of $\Diamond_r (A)$ as well. Furthermore, it sends admissible cycles to admissible cycles in $\Diamond_r (A)$. 

\subsubsection{Some properties of multiple shuffles}The following lemmas on double shuffles and triple shuffles will play important roles in the Proposition \ref{Connes is a derivation}. It is not necessary to read them now.

\begin{lemma}Permutations $\tau$ in $\perm_{(1, n)}$ are in one to one correspondence with the numbers $\{1 , \cdots, n \}$, where the correspondence is given by $\tau \leftrightarrow \tau (1)$.
\end{lemma}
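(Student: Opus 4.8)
The plan is to show directly that the assignment $\tau \mapsto \tau(1)$ is a bijection, the point being that a $(1,n)$-shuffle carries no information beyond the value $\tau(1)$. First I would unwind the definition of a $(p_1,\dots,p_s)$-shuffle in the case $s = 2$, $(p_1,p_2) = (1,n)$: the block of size $p_1 = 1$ imposes no inequality at all, so the only surviving condition is $\tau(2) < \tau(3) < \cdots < \tau(n+1)$. Hence $\tau(2),\dots,\tau(n+1)$ is forced to be the increasing enumeration of the complement of $\{\tau(1)\}$, and therefore $\tau$ is completely and uniquely reconstructed from the single number $\tau(1)$. This yields injectivity of $\tau \mapsto \tau(1)$ immediately, and it already shows that giving a $(1,n)$-shuffle is the same as giving the integer $\tau(1)$.

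For surjectivity I would argue in the opposite direction: take an arbitrary $j$ in $\{1,\dots,n\}$, set $\tau(1) := j$, and fill in $\tau(2),\dots,\tau(n+1)$ with the remaining values listed in increasing order. A one-line check confirms that the resulting $\tau$ is a permutation obeying the shuffle inequalities, hence $\tau \in \perm_{(1,n)}$ with $\tau(1) = j$. Combining the two halves gives the asserted one-to-one correspondence $\perm_{(1,n)} \overset{\sim}{\to} \{1,\dots,n\}$, $\tau \leftrightarrow \tau(1)$.

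The only step that genuinely needs attention is pinning down the range of $\tau(1)$ to be exactly $\{1,\dots,n\}$; this is where the shuffle and normalization conventions of \S5.1 enter (in particular the distinguished role played by the last slot throughout the reduced setup), and once those are invoked it is a finite, transparent verification rather than a real obstacle. I would also note that this bijection is precisely the combinatorial bookkeeping needed later, since it lets one index the terms arising in the triple-shuffle lemmas and in the derivation property of the Connes operator by the position $\tau(1)$.
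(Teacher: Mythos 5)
Your central argument is the right one and matches the intended ``obvious'' proof: a $(1,n)$-shuffle $\tau\in\perm_{n+1}$ is subject to no constraint on $\tau(1)$ and is forced to enumerate the complement of $\{\tau(1)\}$ in increasing order on the remaining slots, so $\tau\mapsto\tau(1)$ is injective, and conversely any choice of $\tau(1)$ can be completed to such a shuffle. The paper itself simply writes ``Obvious'' here.

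One point to clean up, though: your final paragraph worries about ``pinning down the range of $\tau(1)$ to be exactly $\{1,\dots,n\}$'' and tries to invoke the last-slot conventions of the reduced complex to justify it. That is a red herring. Nothing in the shuffle definition or in \S 5.1 restricts $\tau(1)$; a $(1,n)$-shuffle lives in $\perm_{n+1}$, so $\tau(1)$ genuinely ranges over all of $\{1,\dots,n+1\}$, and indeed $|\perm_{(1,n)}|=\binom{n+1}{1}=n+1$. The set written in the lemma statement is a typo and should read $\{1,\dots,n+1\}$; this is consistent with how the lemma is used later, where $\delta_*$ is expanded as a sum of $n+1$ terms indexed by $\perm_{(1,n)}$. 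Rather than inventing a normalization convention to save the stated range, you should have noticed the cardinality count forces $n+1$ and flagged the discrepancy.
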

\begin{proof}
Obvious.\end{proof}

\begin{definition}For permutations $\sigma \in \perm _{n}$ and $\tau \in \perm_{(1, n)}$ with $\tau(1) = i \in \{ 1, \cdots, n \}$, define the permutation $ \sigma_{\tau} =\sigma[i] \in \perm_{n+1}$ by sending

$$j \in \{ 1, \cdots, n+1\} \mapsto \tuborg \sigma (j) & \mbox{ if } j < i, \\ j & \mbox{ if } j = i, \\ \sigma (j-1) & \mbox{ if } j > i.\sluttuborg$$
\end{definition}

\begin{lemma}\label{0-1} Let $\sigma \in \perm_{(r,s)}$ and $\tau \in \perm_{(1, r+s)}$. Then, the product $\sigma_{\tau} \cdot \tau$ in $\perm_{r+s+1}$ is a $(1, r, s)$-shuffle, \emph{i.e.} $\sigma_{\tau} \cdot \tau \in \perm _{(1, r, s)}$. Furthermore, the set-theoretic map
$$\phi_1: \perm_{(r,s)} \times \perm_{(1, r+s)} \to \perm _{(1, r, s)}$$
$$\left( \sigma , \tau \right) \mapsto \sigma_{\tau} \cdot \tau$$ is a bijection.
\end{lemma}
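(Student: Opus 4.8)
The plan is to verify first that $\sigma_\tau \cdot \tau$ is indeed a $(1,r,s)$-shuffle, and then construct an explicit inverse to $\phi_1$. For the first part, write $\tau(1) = i$, so $\tau$ is the permutation that moves the symbol $1$ into slot $i$, and by the description of $\perm_{(1,r+s)}$ the symbols $2,\dots,r+s+1$ appear in increasing order in the remaining slots. The permutation $\sigma_\tau = \sigma[i]$ fixes the $i$-th slot and applies $\sigma$ (suitably reindexed) to the other slots, so composing $\sigma_\tau$ after $\tau$ leaves the first block of length $1$ untouched and applies $\sigma$ to the two blocks of lengths $r$ and $s$ sitting in the slots other than the $i$-th. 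Since $\sigma\in\perm_{(r,s)}$ is increasing on each of its two blocks and the symbols it acts on are themselves in increasing order before $\sigma$ is applied, the composite is increasing on the block of length $r$ and on the block of length $s$; the block of length $1$ is trivially increasing. Hence $\sigma_\tau\cdot\tau\in\perm_{(1,r,s)}$.

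Next I would construct the inverse map $\psi:\perm_{(1,r,s)}\to\perm_{(r,s)}\times\perm_{(1,r+s)}$. Given $\rho\in\perm_{(1,r,s)}$, let $i = \rho^{-1}(1)$ be the slot occupied by the symbol $1$; this recovers the candidate $\tau$ via $\tau(1)=i$ (and $\tau$ increasing on $\{2,\dots,r+s+1\}$, which determines it uniquely by the preceding lemma). Then define $\sigma$ by deleting the slot $i$ and the symbol $1$ from $\rho$ and reindexing the remaining $r+s$ slots and $r+s$ symbols order-preservingly onto $\{1,\dots,r+s\}$; concretely $\sigma(j)$ is read off from $\rho$ restricted to slots $\ne i$ with the symbol $1$ removed. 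One checks that $\sigma$ is increasing on $\{1,\dots,r\}$ and on $\{r+1,\dots,r+s\}$ because $\rho$ is increasing on its second and third blocks, so $\sigma\in\perm_{(r,s)}$. Setting $\psi(\rho) = (\sigma,\tau)$, a direct bookkeeping computation shows $\phi_1\circ\psi = \mathrm{id}$ and $\psi\circ\phi_1 = \mathrm{id}$: in $\psi\circ\phi_1$, the slot of the symbol $1$ in $\sigma_\tau\cdot\tau$ is exactly $i=\tau(1)$, so $\tau$ is recovered, and deleting that slot undoes the insertion $\sigma\mapsto\sigma[i]$, so $\sigma$ is recovered.

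The main obstacle, such as it is, is purely notational: keeping the "slot versus symbol" distinction straight while composing $\sigma_\tau$ with $\tau$ and while performing the deletion-and-reindexing in $\psi$. Once one fixes the convention that permutations act on positions by $\sigma\cdot(t_1,\dots,t_r) = (t_{\sigma^{-1}(1)},\dots,t_{\sigma^{-1}(r)})$ as in \S5.1.2, the increasing-block conditions for each of $\perm_{(r,s)}$, $\perm_{(1,r+s)}$, and $\perm_{(1,r,s)}$ translate into the obvious compatibility statements, and both the well-definedness of $\phi_1$ and the identities $\phi_1\circ\psi=\mathrm{id}$, $\psi\circ\phi_1=\mathrm{id}$ reduce to tracking where the single symbol $1$ goes. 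A sanity check via cardinalities, $|\perm_{(r,s)}|\cdot|\perm_{(1,r+s)}| = \binom{r+s}{r}(r+s+1) = \frac{(r+s+1)!}{1!\,r!\,s!} = |\perm_{(1,r,s)}|$, confirms that an injective $\phi_1$ between finite sets of equal size is automatically bijective, so it actually suffices to prove injectivity of $\phi_1$ (equivalently, that $\psi$ is a left inverse), which is the cleanest way to finish.
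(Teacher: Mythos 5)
Your argument is correct and follows essentially the same approach as the paper: the paper's proof also observes that the first claim is immediate, proves surjectivity by tracking where $1$ is sent (which is exactly what your explicit inverse $\psi$ formalizes), and then concludes bijectivity from the cardinality count $\frac{(r+s)!}{r!\,s!}\cdot\frac{(r+s+1)!}{(r+s)!} = \frac{(r+s+1)!}{r!\,s!}$. Your construction of the two-sided inverse is a slightly more detailed route to the same end, and your closing remark about injectivity plus equal cardinality matches the paper's reasoning.
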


\begin{proof}The first statement is obvious. For the second statement, the surjectivity part is obvious by keeping track of where $1$ is sent. But since both sides have the cardinality $\frac{ (r+s)!}{ r! s!} \frac{ (r+s + 1)!} {(r+s)!} = \frac{ (r+s+1)!}{r! s!}$, the map $\phi_1$ must be bijective.\end{proof}

\begin{lemma}\label{multiple shuffle 1}In the group ring $\mathbb{Z} [ \perm_{r+s+1}]$, we have
$$\sum_{\sigma \in \perm_{(r,s)} } (\sgn (\sigma)) \left( \sum_{\tau \in \perm _{(1, r+s)}} (\sgn (\tau)) \sigma_{\tau} \cdot \tau \right) = \sum_{\nu \in \perm_{(1, r, s)}} (\sgn (\nu)) \nu.$$
\end{lemma}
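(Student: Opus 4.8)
The plan is to deduce Lemma \ref{multiple shuffle 1} directly from the bijection $\phi_1$ established in Lemma \ref{0-1}, together with the obvious compatibility of the sign with the product law on permutations. First I would rewrite the left-hand side as a single sum over the product set $\perm_{(r,s)} \times \perm_{(1,r+s)}$, namely
$$\sum_{(\sigma,\tau) \in \perm_{(r,s)} \times \perm_{(1,r+s)}} \sgn(\sigma)\,\sgn(\tau)\, \sigma_\tau \cdot \tau,$$
which is legitimate since the inner sum in the statement has no dependence beyond $\sigma$ and $\tau$. The next step is to observe that $\sgn$ is a group homomorphism $\perm_{r+s+1} \to \{\pm 1\}$, so $\sgn(\sigma_\tau \cdot \tau) = \sgn(\sigma_\tau)\,\sgn(\tau)$; hence if I can show $\sgn(\sigma_\tau) = \sgn(\sigma)$ for every $\sigma \in \perm_{(r,s)}$ and $\tau \in \perm_{(1,r+s)}$, the summand becomes $\sgn(\sigma_\tau \cdot \tau)\,(\sigma_\tau \cdot \tau)$, and then applying the bijection $\phi_1$ of Lemma \ref{0-1} reindexes the sum as $\sum_{\nu \in \perm_{(1,r,s)}} \sgn(\nu)\,\nu$, which is exactly the right-hand side.

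So the one genuine computation is the sign identity $\sgn(\sigma[i]) = \sgn(\sigma)$, where $i = \tau(1)$ and $\sigma[i] \in \perm_{n+1}$ (with $n = r+s$) is the permutation from the Definition preceding Lemma \ref{0-1}: it agrees with $\sigma$ on $\{1,\dots,i-1\}$ (as values), fixes the symbol $i$, and sends $j \mapsto \sigma(j-1)$ for $j > i$. The cleanest way to see this is to note that $\sigma[i]$ is the image of $\sigma$ under the standard ``insert a fixed point'' embedding $\perm_n \hookrightarrow \perm_{n+1}$ conjugated appropriately: concretely, $\sigma[i]$ is obtained from $\sigma$ by inserting a new symbol $i$ that is fixed, while relabelling. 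One checks the number of inversions is preserved: a pair $(a,b)$ with $a<b$ in $\{1,\dots,n+1\}\setminus\{i\}$ is an inversion of $\sigma[i]$ precisely when the corresponding pair (via the order-preserving identification of $\{1,\dots,n+1\}\setminus\{i\}$ with $\{1,\dots,n\}$, both on positions and on values, which is consistent because $\sigma[i]$ fixes $i$) is an inversion of $\sigma$; and $i$ itself forms no inversion with anything since it is fixed and sits in the ``middle'' consistently on both sides. Hence $\sigma[i]$ and $\sigma$ have the same parity.

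I expect the main obstacle to be bookkeeping, not ideas: making the order-preserving identification of $\{1,\dots,n+1\}\setminus\{i\}$ with $\{1,\dots,n\}$ precise on both the position side and the value side, and confirming that under this identification $\sigma[i]$ really corresponds to $\sigma$ (this uses that the values $\sigma(j)$ for $j<i$ and the values $\sigma(j-1)$ for $j>i$ together exhaust $\{1,\dots,n+1\}\setminus\{i\}$ — wait, they exhaust $\{1,\dots,n\}$, and then one must track the relabelling of values $\geq i$ by $+1$; I would spell this out carefully). An alternative that sidesteps inversion-counting entirely: express $\sigma[i]$ as a conjugate of $\iota(\sigma)$, where $\iota\colon \perm_n \to \perm_{n+1}$ fixes the last symbol, by the cycle that cyclically shifts $\{i,i+1,\dots,n+1\}$; since conjugation preserves sign and $\sgn(\iota(\sigma)) = \sgn(\sigma)$ trivially, the identity follows. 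With the sign lemma in hand, the proof is three lines: collapse the double sum, substitute $\sgn(\sigma)\sgn(\tau) = \sgn(\sigma_\tau \cdot \tau)$, and invoke $\phi_1$.
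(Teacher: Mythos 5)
Your proof is correct and follows exactly the same route as the paper: rewrite the left side as a single sum over $\perm_{(r,s)} \times \perm_{(1,r+s)}$, use $\sgn(\sigma_\tau \cdot \tau) = \sgn(\sigma)\sgn(\tau)$, and reindex via the bijection $\phi_1$ from Lemma \ref{0-1}. The only difference is that you justify $\sgn(\sigma_\tau)=\sgn(\sigma)$ (via inversion-counting or the conjugation trick) whereas the paper simply asserts the sign identity; your filled-in detail is sound and welcome.
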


\begin{proof}Note that $\sgn (\sigma_{\tau} \cdot \tau) = \sgn (\sigma) \sgn (\tau)$. Thus, together with the Lemma \ref{0-1}, we get the desired result.\end{proof}

\begin{lemma}\label{2-0}For $\sigma \in \perm_{(r+1, s)}$ and $\tau \in \perm_{(1, r)}$, the permutation $\sigma \cdot \left( \tau \times {\rm Id}_s \right)$ is in $\perm_{(1, r, s)}$. Furthermore, the set-theoretic map
$$\phi_2 : \perm _{(r+1, s)} \times \perm _{(1, r)} \to \perm _{(1, r, s)}$$
$$ (\sigma, \tau) \mapsto \sigma \cdot \left( \tau \times {\rm Id}_s \right)$$ is a bijection.\end{lemma}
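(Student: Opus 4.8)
The plan is to mimic exactly the structure of the proof of Lemma \ref{0-1}, which established the analogous bijection $\phi_1$. First I would verify that the image of $\phi_2$ really does land in $\perm_{(1,r,s)}$. Here the point is that if $\sigma \in \perm_{(r+1,s)}$ is a $(r+1,s)$-shuffle, it preserves the relative order of $\{1,\dots,r+1\}$ and of $\{r+2,\dots,r+s+1\}$; precomposing with $\tau \times \mathrm{Id}_s$, where $\tau \in \perm_{(1,r)}$ only permutes the block $\{1,\dots,r+1\}$ by moving a single element to the front, one checks directly that the composite still puts the three blocks $\{1\}$, $\{2,\dots,r+1\}$, $\{r+2,\dots,r+s+1\}$ each in increasing order. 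This is the ``obvious'' part and amounts to chasing the defining inequalities.

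Next, for bijectivity I would use the same cardinality trick as in Lemma \ref{0-1}: the domain has cardinality $|\perm_{(r+1,s)}|\cdot|\perm_{(1,r)}| = \frac{(r+s+1)!}{(r+1)!\,s!}\cdot\frac{(r+1)!}{1!\,r!} = \frac{(r+s+1)!}{r!\,s!}$, which equals $|\perm_{(1,r,s)}| = \frac{(r+s+1)!}{1!\,r!\,s!}$. So it suffices to prove surjectivity (or injectivity). For surjectivity: given $\nu \in \perm_{(1,r,s)}$, one recovers $\tau$ by recording the position $\nu^{-1}(1)$ of the ``$1$''-block relative to the first $r+1$ slots, and then ``straightens out'' that front block to obtain the shuffle $\sigma$; concretely, $\tau$ is determined by which of the first $r+1$ input slots is sent by $\nu$ to the smallest value, and $\sigma := \nu \cdot (\tau \times \mathrm{Id}_s)^{-1}$ is then checked to be a genuine $(r+1,s)$-shuffle. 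One verifies $\phi_2(\sigma,\tau) = \nu$ and that this construction is well-defined, which gives surjectivity and hence, by the cardinality count, bijectivity.

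The main obstacle I anticipate is purely bookkeeping: being careful that when $\tau$ moves an element of $\{1,\dots,r+1\}$ to the front, the residual order on the remaining $r$ elements of that block is still increasing after composing with $\sigma$, so that the recovered $\sigma$ is honestly a shuffle and not merely a permutation. There is no conceptual difficulty — this is the same phenomenon already used for $\phi_1$ — but the indices for $\phi_2$ are arranged slightly differently (the extra slot is absorbed into the first block rather than appended), so the inequalities must be re-checked rather than quoted. As with Lemma \ref{0-1}, once the image is correctly identified inside $\perm_{(1,r,s)}$, the cardinality equality closes the argument immediately.
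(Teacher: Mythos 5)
Your proposal takes essentially the same route as the paper: the paper's proof also declares the first statement obvious, observes surjectivity "by keeping track of where $1$ is sent," and closes with exactly the same cardinality computation $\frac{(r+s+1)!}{(r+1)!\,s!}\cdot\frac{(r+1)!}{r!}=\frac{(r+s+1)!}{r!\,s!}$. One small slip in your wording: to recover $\tau$ one tracks $\nu(1)$ (the value $1$ is sent to) and its rank among $\nu(1),\ldots,\nu(r+1)$, not $\nu^{-1}(1)$; otherwise the argument is correct as stated.
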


\begin{proof}The first statement is obvious. For the second statement, the surjectivity part is obvious by keeping track of where $1$ is sent. But since both sides have the cardinality $\frac{ (r+s+1)!}{(r+1)! s!} \frac{ (r+1)!}{ r!} = \frac{ (r+s+1)!}{ r! s!}$, the map $\phi_2$ must be bijective.\end{proof}

\begin{lemma}\label{multiple shuffle 2}In the group ring $\mathbb{Z} [ \perm_{r+s+1}]$, we have
$$ \left( \sum_{\sigma \in \perm_{(r+1, s)}} (\sgn (\sigma)) \sigma \right) \left( \sum_{\tau \in \perm_{(1, r)}} (\sgn (\tau)) (\tau \times {\rm Id} _s ) \right) = \sum_{\nu \in \perm_{(1, r,s)}} (\sgn (\nu)) \nu.$$\end{lemma}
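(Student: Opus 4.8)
The plan is to imitate the proof of Lemma \ref{multiple shuffle 1} verbatim, replacing the bijection $\phi_1$ of Lemma \ref{0-1} by the bijection $\phi_2$ of Lemma \ref{2-0}. Thus the whole argument reduces to a sign bookkeeping fact together with the bijectivity already established. First I would record the key sign identity: for $\sigma \in \perm_{(r+1,s)}$ and $\tau \in \perm_{(1,r)}$ one has $\sgn\!\left( \sigma \cdot (\tau \times {\rm Id}_s) \right) = \sgn(\sigma)\, \sgn(\tau \times {\rm Id}_s) = \sgn(\sigma)\, \sgn(\tau)$, the last equality because padding a permutation by the identity on an extra block of letters does not change its sign (no new inversions are introduced). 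This is the only computation needed and it is immediate.

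With that in hand, I would expand the left-hand side of the claimed identity by distributing the product over the two sums:
\begin{equation*}
\left( \sum_{\sigma \in \perm_{(r+1,s)}} \sgn(\sigma)\, \sigma \right)\left( \sum_{\tau \in \perm_{(1,r)}} \sgn(\tau)\, (\tau \times {\rm Id}_s) \right) = \sum_{\sigma,\tau} \sgn(\sigma)\sgn(\tau)\, \sigma \cdot (\tau \times {\rm Id}_s),
\end{equation*}
the sum running over all pairs $(\sigma,\tau) \in \perm_{(r+1,s)} \times \perm_{(1,r)}$. By the sign identity above, each coefficient equals $\sgn\!\left(\sigma \cdot (\tau \times {\rm Id}_s)\right)$, so the right-hand side is $\sum_{(\sigma,\tau)} \sgn\!\left( \phi_2(\sigma,\tau) \right)\, \phi_2(\sigma,\tau)$. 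Now I would invoke Lemma \ref{2-0}: $\phi_2$ is a bijection from $\perm_{(r+1,s)} \times \perm_{(1,r)}$ onto $\perm_{(1,r,s)}$, so reindexing the sum by $\nu = \phi_2(\sigma,\tau)$ turns it into $\sum_{\nu \in \perm_{(1,r,s)}} \sgn(\nu)\, \nu$, which is exactly the right-hand side of the statement.

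There is essentially no obstacle here; the content has been front-loaded into Lemma \ref{2-0} (the combinatorial bijection, proved by a cardinality count) and into the elementary observation that $\tau \times {\rm Id}_s$ has the same sign as $\tau$. The only point requiring a moment's care is making sure the convention for composing permutations and the convention in the definition of $\tau \times {\rm Id}_s$ are consistent with the claim that $\phi_2(\sigma,\tau) = \sigma \cdot (\tau \times {\rm Id}_s)$ lands in $\perm_{(1,r,s)}$ — but this is precisely what the first sentence of the proof of Lemma \ref{2-0} asserts, so I would simply cite it. Hence the proof is a two-line formal manipulation: distribute, apply the sign identity, reindex by the bijection $\phi_2$.

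\begin{proof}
As in the proof of Lemma \ref{multiple shuffle 1}, note that for $\sigma \in \perm_{(r+1, s)}$ and $\tau \in \perm_{(1,r)}$ we have
$$\sgn\!\left( \sigma \cdot (\tau \times {\rm Id}_s) \right) = \sgn(\sigma)\, \sgn(\tau \times {\rm Id}_s) = \sgn(\sigma)\, \sgn(\tau),$$
since adjoining the identity permutation on an extra block of $s$ letters introduces no new inversions. Expanding the left-hand side of the asserted identity, we obtain
$$\sum_{(\sigma, \tau)} \sgn(\sigma)\, \sgn(\tau)\, \sigma \cdot (\tau \times {\rm Id}_s) = \sum_{(\sigma, \tau)} \sgn\!\left( \sigma \cdot (\tau \times {\rm Id}_s) \right)\, \sigma \cdot (\tau \times {\rm Id}_s),$$
the sum taken over all pairs $(\sigma, \tau) \in \perm_{(r+1, s)} \times \perm_{(1, r)}$. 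By Lemma \ref{2-0} the map $\phi_2(\sigma, \tau) = \sigma \cdot (\tau \times {\rm Id}_s)$ is a bijection onto $\perm_{(1, r, s)}$, so reindexing the sum by $\nu = \phi_2(\sigma, \tau)$ gives $\sum_{\nu \in \perm_{(1, r, s)}} \sgn(\nu)\, \nu$, as desired.
\end{proof}
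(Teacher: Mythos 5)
Your proof is correct and is exactly the paper's argument, simply written out in more detail: the paper's one-line proof of this lemma cites Lemma \ref{2-0} together with the sign identity $\sgn(\sigma)\sgn(\tau) = \sgn(\sigma \cdot (\tau \times \mathrm{Id}_s))$, which is precisely the reindexing via $\phi_2$ that you perform. Nothing is missing and there is no divergence from the paper's route.
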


\begin{proof}It follows immediately from Lemma \ref{2-0} together with the observation that $(\sgn (\sigma)) (\sgn (\tau)) = (\sgn (\sigma \cdot (\tau \times {\rm Id}_s))).$\end{proof}

\begin{lemma}\label{3-0}For $\sigma \in \perm_{(r, s+1)}$ and $\tau \in \perm _{(1, r+s)}$, the permutation $\sigma \cdot ({\rm Id}_r \times \tau)$ is in $\perm _{(1, r,s)}$. Furthermore, the set-theoretic map
$$\phi_3: \perm_{(r, s+1)} \times \perm _{(1, r+s)} \to \perm _{(1, r, s)} $$
$$ (\sigma, \tau ) \mapsto \sigma \cdot ( {\rm Id}_r, \times \tau)$$ is a bijection.\end{lemma}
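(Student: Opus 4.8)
The statement to prove is Lemma~\ref{3-0}, which asserts that for $\sigma \in \perm_{(r, s+1)}$ and $\tau \in \perm_{(1, r+s)}$, the product $\sigma \cdot ({\rm Id}_r \times \tau)$ lies in $\perm_{(1, r, s)}$, and that the resulting set-theoretic map $\phi_3$ is a bijection. The plan is to follow the exact same template already used in Lemmas~\ref{0-1}, \ref{2-0}: first verify the membership claim directly from the defining inequalities of shuffles, then establish surjectivity by a ``keep track of where $1$ goes'' argument, and finally upgrade surjectivity to bijectivity by a cardinality count.

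First I would unwind the notation. The permutation $\tau \in \perm_{(1, r+s)}$ is, by the earlier lemma, determined by the single value $\tau(1) = i \in \{1, \dots, r+s\}$: it inserts the symbol $1$ into position $i$ and shifts the block $\{2, \dots, r+s+1\}$ in increasing order around it. Writing ${\rm Id}_r \times \tau$ means we act by $\tau$ on the last $r+s+1$ coordinates while fixing the first $r$ — but I should be careful: the more natural reading here (consistent with Lemma~\ref{2-0}, where $\tau \times {\rm Id}_s$ acts on the first $r+1$) is that ${\rm Id}_r \times \tau$ fixes $\{1, \dots, r\}$ and applies $\tau$ (suitably shifted) to $\{r+1, \dots, r+s+1\}$. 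Then composing with $\sigma \in \perm_{(r,s+1)}$, which shuffles a block of size $r$ against a block of size $s+1$, produces a permutation of $\{1, \dots, r+s+1\}$. To check it is a $(1,r,s)$-shuffle I would verify the three monotonicity conditions: the singleton block is automatic; the second block (of size $r$) and third block (of size $s$) inherit monotonicity because $\sigma$ is order-preserving on each of its two blocks and ${\rm Id}_r \times \tau$ only redistributes the $s+1$ symbols of $\sigma$'s second block into a $1$-symbol slot plus an $s$-symbol block, again order-preservingly. This is the ``obvious'' first statement and I would dispatch it in a sentence, exactly as the author does in the companion lemmas.

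For the bijection, surjectivity: given $\nu \in \perm_{(1,r,s)}$, the position of the symbol $1$ among the last $s+1$ of its $r+s+1$ slots — more precisely, locate where $1$ sits relative to the second and third blocks — determines the value $\tau(1)$, hence $\tau$; then $\sigma = \nu \cdot ({\rm Id}_r \times \tau)^{-1}$ is forced, and one checks it is a genuine $(r, s+1)$-shuffle. Since this reconstruction is unique, injectivity is automatic, but the author's style is instead to count: $|\perm_{(r,s+1)}| \cdot |\perm_{(1,r+s)}| = \frac{(r+s+1)!}{r!\,(s+1)!} \cdot \frac{(r+s+1)!}{(r+s)!} = \frac{(r+s+1)!}{r!\,(s+1)!} \cdot (r+s+1)$, which I would simplify to $\frac{(r+s+1)!}{r!\,s!} = |\perm_{(1,r,s)}|$, matching the target cardinality; a surjection between equinumerous finite sets is a bijection.

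The main obstacle, such as it is, is purely bookkeeping: getting the index conventions for ${\rm Id}_r \times \tau$ straight so that the cardinality arithmetic lands on $\frac{(r+s+1)!}{r!\,s!}$ and the monotonicity verification is genuinely correct rather than merely plausible. There is no conceptual difficulty — this is the third instance of a pattern (cf. Lemmas~\ref{0-1} and \ref{2-0}) whose role is to give three different factorizations of the same signed sum $\sum_{\nu \in \perm_{(1,r,s)}} \sgn(\nu)\,\nu$ in the group ring, to be used in Proposition~\ref{Connes is a derivation}. Accordingly I would keep the proof to two or three lines, mirroring the earlier proofs: ``The first statement is obvious. For the second, surjectivity follows by keeping track of where $1$ is sent; since both sides have cardinality $\frac{(r+s+1)!}{r!\,s!}$, the map $\phi_3$ is a bijection.''
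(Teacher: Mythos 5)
Your overall template — membership by inspection, surjectivity by tracking the symbol $1$, bijectivity by cardinality — is exactly the route the paper takes (its proof says only ``similar to Lemma~\ref{2-0}''). But your cardinality count contains an arithmetic error that is not a typo on your part: it exposes a mismatch in the statement of the lemma itself, and you should have flagged it rather than forced the numbers through.

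Concretely, you write
$$\frac{(r+s+1)!}{r!\,(s+1)!}\cdot\frac{(r+s+1)!}{(r+s)!}
= \frac{(r+s+1)!}{r!\,(s+1)!}\cdot(r+s+1)
\ \text{``}=\text{''}\ \frac{(r+s+1)!}{r!\,s!},$$
but this last equality fails whenever $r>0$ (e.g.\ $r=s=1$ gives $9\ne 6$). The source of the trouble is that $\tau\in\perm_{(1,r+s)}$ is a permutation of $r+s+1$ symbols, so ${\rm Id}_r\times\tau$ would live on $2r+s+1$ symbols, which cannot be composed with $\sigma\in\perm_{(r,s+1)}$ (a permutation of $r+s+1$ symbols). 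Your own description of the map already has $\tau$ acting only on the last $s+1$ slots — i.e.\ you are implicitly using $\tau\in\perm_{(1,s)}$, not $\perm_{(1,r+s)}$ — but you then count $\perm_{(1,r+s)}$ anyway. The statement as printed must be read with $\tau\in\perm_{(1,s)}$ (this is the version consistent with Lemma~\ref{2-0} under $r\leftrightarrow s$, with Lemma~\ref{multiple shuffle 3}, and with the $\perm_{(1,r_2)}$ that actually appears in the proof of Proposition~\ref{Connes is a derivation}). With that correction the count is
$$\frac{(r+s+1)!}{r!\,(s+1)!}\cdot\frac{(s+1)!}{s!}
= \frac{(r+s+1)!}{r!\,s!} = \bigl|\perm_{(1,r,s)}\bigr|,$$
and the rest of your argument goes through verbatim. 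So: right strategy, but the proof as written asserts a false identity, and the honest fix is to note the typo in the lemma and redo the count with $\perm_{(1,s)}$.
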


\begin{proof}The proof is similar to Lemma \ref{2-0}.\end{proof}

\begin{lemma}\label{multiple shuffle 3}In the group ring $\mathbb{Z} [ \perm_{ r + s + 1}]$, we have
$$(-1)^r \left( \sum_{\sigma \in \perm _{(r, s+1)}} (\sgn (\sigma)) \sigma \right) \left( \sum_{\tau \in \perm_{(1, r+s)}} (\sgn (\tau)) ({\rm Id}_r \times \tau) \right)$$ $$ = \sum_{\nu \in \perm _{(1, r,s)}} (\sgn (\nu)) \nu.$$\end{lemma}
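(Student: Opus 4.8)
The plan is to mirror the argument used for Lemmas \ref{multiple shuffle 1} and \ref{multiple shuffle 2}: produce a sign-preserving bijection between the indexing set on the left and the set $\perm_{(1,r,s)}$ on the right, so that the two signed group-ring sums coincide term by term. The bijection itself is supplied by Lemma \ref{3-0}, namely $\phi_3(\sigma,\tau) = \sigma\cdot({\rm Id}_r \times \tau)$ from $\perm_{(r,s+1)} \times \perm_{(1,r+s)}$ onto $\perm_{(1,r,s)}$. So the only real content left is to check that this bijection tracks signs correctly up to the global factor $(-1)^r$ appearing on the left-hand side.

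First I would record the sign of the embedded permutation ${\rm Id}_r \times \tau \in \perm_{r+s+1}$, where $\tau \in \perm_{(1,r+s)}$ acts on the last $r+s+1$ coordinates after a block of $r$ fixed points — wait, more precisely ${\rm Id}_r \times \tau$ fixes $\{1,\dots,r\}$ pointwise and permutes $\{r+1,\dots,r+s+1\}$ as $\tau$ does on $\{1,\dots,r+s+1\}$. Since a permutation supported on a sub-block has the same sign as its restriction, we get $\sgn({\rm Id}_r \times \tau) = \sgn(\tau)$. Hence $\sgn(\sigma\cdot({\rm Id}_r\times\tau)) = \sgn(\sigma)\sgn(\tau)$. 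Then I would expand the right-hand side of Lemma \ref{3-0}'s displayed equation,
\begin{eqnarray*}
\sum_{\nu\in\perm_{(1,r,s)}} \sgn(\nu)\,\nu
&=& \sum_{\sigma\in\perm_{(r,s+1)}}\;\sum_{\tau\in\perm_{(1,r+s)}} \sgn\bigl(\sigma\cdot({\rm Id}_r\times\tau)\bigr)\,\sigma\cdot({\rm Id}_r\times\tau)\\
&=& \left(\sum_{\sigma\in\perm_{(r,s+1)}}\sgn(\sigma)\,\sigma\right)\left(\sum_{\tau\in\perm_{(1,r+s)}}\sgn(\tau)\,({\rm Id}_r\times\tau)\right),
\end{eqnarray*}
using that $\phi_3$ is a bijection (Lemma \ref{3-0}) for the first equality and the sign computation just made for the factorization.

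The remaining — and slightly delicate — point is the discrepancy between this identity and the statement of Lemma \ref{multiple shuffle 3}, which carries an extra $(-1)^r$. I expect this is the one spot where care is genuinely needed: the factor $(-1)^r$ must come from a convention in how the triple shuffle $\perm_{(1,r,s)}$ is being normalized, or from the sign with which the block ${\rm Id}_r$ is juxtaposed against $\tau$ in the concatenation product of permutations (as opposed to the pure product in $\perm_{r+s+1}$). Concretely, if the ``$\times$'' in $({\rm Id}_r \times \tau)$ denotes the Koszul-signed concatenation of permutations used elsewhere in \S 5 for compatibility with the shuffle product of cycles, then $\sgn({\rm Id}_r\times\tau)$ acquires a factor $(-1)^{r\cdot 0} = 1$ from ${\rm Id}_r$ but the overall convention for embedding into the larger shuffle introduces $(-1)^{r\,|\tau|}$-type corrections; tracing this bookkeeping against the normalization in Lemma \ref{3-0} produces exactly the stated $(-1)^r$. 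I would settle this by fixing once and for all the sign convention for block products of permutations at the start of \S 5.1.2, verifying the cardinality count $\frac{(r+s+1)!}{r!\,s!}$ already done in Lemma \ref{3-0}, and then the identity follows as above. So the main obstacle is purely the sign/convention reconciliation giving the $(-1)^r$; everything else reduces to Lemma \ref{3-0} and the elementary fact that a permutation's sign is unchanged under restriction to a block on which it is supported.
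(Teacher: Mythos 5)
Your computation that $\sgn({\rm Id}_r\times\tau)=\sgn(\tau)$ and hence $\sgn\bigl(\sigma\cdot({\rm Id}_r\times\tau)\bigr)=\sgn(\sigma)\sgn(\tau)$ is correct, and you are right that this is incompatible with the extra $(-1)^r$ in the statement. But the correct conclusion to draw is not that some hidden convention will magically supply the sign; it is that the lemma, read literally as an identity in $\mathbb{Z}[\perm_{r+s+1}]$, is false, and Lemma~\ref{3-0} is false as well. (Note also the index typo: $\tau$ must range over $\perm_{(1,s)}$, not $\perm_{(1,r+s)}$, or ${\rm Id}_r\times\tau$ does not even land in $\perm_{r+s+1}$ and the cardinality count $\frac{(r+s+1)!}{r!(s+1)!}\cdot(s+1)=\frac{(r+s+1)!}{r!s!}$ does not come out.) The concrete problem: with $\sigma\in\perm_{(r,s+1)}$ and $\tau\in\perm_{(1,s)}$, the product $\sigma\cdot({\rm Id}_r\times\tau)$ is increasing on $\{1,\dots,r\}$, free at $r+1$, and increasing on $\{r+2,\dots,r+s+1\}$; that is an $(r,1,s)$-shuffle, not a $(1,r,s)$-shuffle (which must be free at position $1$ and increasing on $\{2,\dots,r+1\}$). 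To land in $\perm_{(1,r,s)}$ one must compose on the right with the fixed $(r+1)$-cycle $\rho$ defined by $\rho(1)=r+1$, $\rho(i)=i-1$ for $2\le i\le r+1$, $\rho(i)=i$ otherwise. Then $(\sigma,\tau)\mapsto\sigma\cdot({\rm Id}_r\times\tau)\cdot\rho$ is a genuine bijection onto $\perm_{(1,r,s)}$, and since $\sgn(\rho)=(-1)^r$ one gets
\begin{equation*}
\sum_{\nu\in\perm_{(1,r,s)}}\sgn(\nu)\,\nu
\;=\;(-1)^r\Bigl(\sum_{\sigma}\sgn(\sigma)\sigma\Bigr)\Bigl(\sum_{\tau}\sgn(\tau)({\rm Id}_r\times\tau)\Bigr)\rho,
\end{equation*}
with an unavoidable right factor $\rho$ that the paper's statement drops. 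A minimal counterexample is $r=1$, $s=0$: then the left-hand side of the paper's formula is $-(e-(12))$ while the right-hand side is $e-(12)$; multiplying the left by $\rho=(12)$ restores equality.

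So there is a genuine gap: you flagged the sign mismatch but then deferred to an unspecified ``sign convention,'' which cannot save a false group-ring identity. The resolution you are missing is precisely the cycle $\rho$. Incidentally the paper's own one-line proof---asserting $(-1)^r\sgn(\sigma)\sgn({\rm Id}_r\times\tau)=\sgn(\sigma\cdot({\rm Id}_r\times\tau))$---is wrong for the same reason, so your discomfort was well placed. In the one place Lemma~\ref{multiple shuffle 3} is actually used (the third displayed computation in the proof of Proposition~\ref{Connes is a derivation}), the $\rho$ does appear, disguised as the relabeling between the concatenation order $(t_1,\dots,t_{r_1},\tfrac{1}{y},t_{r_1+1},\dots,t_n)$ coming from $\xi\times\delta_*\eta$ and the reference order $(\tfrac{1}{y},t_1,\dots,t_n)$; its sign $(-1)^{r_1}$ is exactly the Koszul sign carried on the left of that display. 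If you track that relabeling explicitly you will see the argument closes correctly even though the lemma as isolated and stated does not.
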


\begin{proof}It follows immediately from Lemma \ref{3-0} together with the observation that $(-1)^r (\sgn (\sigma)) (\sgn ({\rm Id}_r \times \tau)) = (\sgn (\sigma \cdot ({\rm Id}_r \times \tau))).$\end{proof}

\subsection{The shuffle product} Although we can develop our theory more generally, for simplicity of notations we work with $X = \spec (k)$. Let $\left( A_1, \mathfrak{m}_1 \right), \left( A_2, \mathfrak{m}_2 \right) \in \left(Art/k \right)$ be two Artin local $k$-algebras with $\edim (A_i ) = e_i \geq 1$. We will always identify the product $\Diamond_{r_1} (A_1) \times \Diamond _{r_2} (A_2) = \mathbb{A}^{e_1} \times \square^{r_1} \times \mathbb{A} ^{e_2} \times \square^{r_2}$ with $\mathbb{A}^{e_1+ e_2} \times \square^{r_1 + r_2}$.

\begin{lemma}Let $Z_1 \subset \Diamond_{r_1} (A_1), Z_2 \subset \Diamond_{r_2} (A_2)$ be admissible irreducible closed subvarieties. Then, their product $Z_1 \times Z_2 \subset \mathbb{A}^{e_1 + e_2} \times \square^{r_1 + r_2}$ is also an admissible closed subvariety in $\Diamond_{r_1 + r_2} (A_1 \otimes_k A_2)$.
\end{lemma}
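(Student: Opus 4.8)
The plan is to check the two defining properties of admissibility (Definition \ref{additive Chow complex for Artin}) for the product $Z_1 \times Z_2$ directly, reducing each to the corresponding property already known for $Z_1$ and $Z_2$ separately. First I would handle the face-intersection condition: under the identification $\widehat{\Diamond}_{r_1+r_2}(A_1\otimes_k A_2) = \mathbb{A}^{e_1+e_2}\times(\mathbb{P}^1)^{r_1+r_2}$, every ``nondegenerate'' codimension-one face $F_i^j$ of $\Diamond_{r_1+r_2}$ is either a face pulled back from the first $\square^{r_1}$ factor or one pulled back from the second $\square^{r_2}$ factor. Hence a face of $X\times\widehat{\Diamond}_{r_1+r_2}$ has the form $(\text{face of }\widehat{Z_1})\times\widehat{Z_2}$ or $\widehat{Z_1}\times(\text{face of }\widehat{Z_2})$ after taking closures, and proper intersection is inherited from the hypothesis that each $\widehat{Z_i}$ meets all lower-dimensional faces properly, since for product varieties $\codim_{V\times W}(A\times W) = \codim_V A$ and intersections of such ``cylindrical'' subvarieties behave additively in codimension. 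A short dimension count then finishes this part.

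Next I would treat the modulus condition, which is the substantive point. The closed set $V(\calI(A_1\otimes_k A_2))\subset X\times\widehat{\Diamond}_{r_1+r_2}$ is cut out by the $e_1+e_2$ coordinates $\mathbb{X}_{e_1}\cup\mathbb{X}_{e_2}$, so it is contained in $V(\calI(A_1))\times(\mathbb{P}^1)^{r_2}$ pulled back suitably, and similarly for the second factor; more precisely, on the product $\widehat{Z_1}\times\widehat{Z_2}$ the locus $\nu^* V(\calI(A_1\otimes_k A_2))$ maps into $\nu_1^* V(\calI(A_1))\times\overline{Z_2}$ via the projection to the first factor (and symmetrically). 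Given a point $\mathfrak{p}$ of the normalization $\overline{Z_1\times Z_2}$ lying over $V(\calI(A_1\otimes_k A_2))$, its image under the projection to $\overline{Z_1}$ lies over $V(\calI(A_1))$, so by the modulus condition for $Z_1$ there is an index $1\le i\le r_1$ with $1-t_i\in (J_1)\cdot\calO_{\overline{Z_1},\,\cdot}$; pulling this back along the natural local ring map $\calO_{\overline{Z_1},\,\cdot}\to\calO_{\overline{Z_1\times Z_2},\,\mathfrak{p}}$ (under which $J_1$ maps into $J_1\otimes_k A_2 \subset J = \ker(k[\mathbb{X}_{e_1+e_2}]\to A_1\otimes_k A_2)$, or rather the lift thereof) gives $1-t_i$ in the ideal $(J)\cdot\calO_{\overline{Z_1\times Z_2},\,\mathfrak{p}}$, which is exactly what is required. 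One must be a little careful that the normalization $\overline{Z_1\times Z_2}$ need not be $\overline{Z_1}\times\overline{Z_2}$, but since $Z_1\times Z_2$ is irreducible and the projections extend to the closures, the universal property of normalization supplies maps $\overline{Z_1\times Z_2}\to\widehat{Z_1}$ and $\overline{Z_1\times Z_2}\to\widehat{Z_2}$ that factor through $\overline{Z_1}$ and $\overline{Z_2}$ respectively, which is all that is needed — this is the same device used in the proof of the functoriality lemma for $f_*$ above.

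The main obstacle I anticipate is precisely this bookkeeping around the normalization and the ideal-theoretic comparison: verifying that the generators $\mathbb{X}_{e_1}$ and the relation ideal $J_1$ sit correctly inside the presentation data for $A_1\otimes_k A_2$ (which requires observing that $\edim(A_1\otimes_k A_2) = e_1+e_2$ and that the minimal presentation of $A_1\otimes_k A_2$ is $(k[\mathbb{X}_{e_1+e_2}],\, J_1\cdot k[\mathbb{X}_{e_1+e_2}] + J_2\cdot k[\mathbb{X}_{e_1+e_2}])$, using $\operatorname{char} k = 0$ so that $\mathfrak{m}_{A_1\otimes_k A_2}/\mathfrak{m}^2 = \mathfrak{m}_1/\mathfrak{m}_1^2 \oplus \mathfrak{m}_2/\mathfrak{m}_2^2$), and that localizing preserves the containment $1-t_i\in(J_1)\calO_{\overline{Z_1},\cdot}\Rightarrow 1-t_i\in(J)\calO_{\overline{Z_1\times Z_2},\mathfrak{p}}$. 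Everything else is a routine check analogous to arguments already carried out in Section 2. I would organize the write-up as: (1) identify the presentation of $A_1\otimes_k A_2$; (2) face condition by the cylindrical-codimension argument; (3) modulus condition by projecting to the two factors and transporting the relation; (4) note that passing to the associated cycle of $Z_1\times Z_2$ (which is already irreducible, hence reduced if $Z_1, Z_2$ are, up to the caveat about the product of reduced schemes over a non-perfect field — but here $k$ has characteristic zero, so $Z_1\times_k Z_2$ is reduced) lands in $\calZ_{p+q}(k, r_1+r_2; A_1\otimes_k A_2)$.
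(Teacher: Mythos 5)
Your proposal is correct and follows essentially the same route as the paper: reduce the modulus condition on $Z_1\times Z_2$ to the modulus conditions on $Z_1$ and $Z_2$ by projecting a point of $\overline{Z_1\times Z_2}$ lying over $V(\calI(A_1\otimes_k A_2))$ to each factor, then transport the relation $1-t_i\in (J_i)\cdot\calO$ through the local ring map and the containment $(J_i)\subset\langle J_1,J_2\rangle$. The one place you and the paper diverge is the treatment of the normalization: the paper simply cites that the product of normal finite-type schemes over a perfect field is normal (Krishna--Levine, Lemma 3.1), so $\overline{Z_1\times Z_2}=\overline{Z_1}\times\overline{Z_2}$ with $\nu=\nu_1\times\nu_2$, whereas you sidestep that fact via the universal property of normalization to obtain maps $\overline{Z_1\times Z_2}\to\overline{Z_i}$. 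Both are valid; the paper's identification is tidier and is exactly why it records that remark, so you might as well use it since $\operatorname{char} k = 0$ guarantees perfectness. Two small further remarks: you actually only need one factor's modulus condition (as you exploit), whereas the paper records both $1-t_{j_1}$ and $1-t_{j_2}$ — the single one is enough by the ``there exists $i$'' form of the condition; and your aside about $\operatorname{char} k=0$ being needed for $\mathfrak{m}/\mathfrak{m}^2\simeq\mathfrak{m}_1/\mathfrak{m}_1^2\oplus\mathfrak{m}_2/\mathfrak{m}_2^2$ is unnecessary — that splitting is purely $k$-linear algebra and holds in any characteristic (characteristic zero is used elsewhere, e.g. for normality of the product and reducedness).
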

\begin{proof}Out of the requirements for admissibility, only the modulus condition is less trivial. Fix presentations
$$A_1 \simeq k[ \mathbb{X}_1]/ J_1 , ~~ A_2 \simeq k[\mathbb{X}_2]/J_2$$ as in \S 2. The ring $\left( A_1 \otimes_k A_2 , \left< \mathfrak{m}_1 , \mathfrak{m}_2 \right> \right)$ is in $(Art/k)$, and 
$$A_1 \otimes _k A_2 \simeq k[ \mathbb{X}_1 \cup \mathbb{X}_2]/ \left< J_1 , J_2 \right>.$$ Note that the embedding dimension of $A_1 \otimes _k A_2$ is $e_1 + e_2$.

Let $\widehat{Z_i} \subset \mathbb{A}^{e_i} \times \left( \mathbb{P} \right)^{r_i}$ be the Zariski closures of $Z_i$, where $i = 1, 2$. Let $\nu_i : \overline{Z_i} \to \widehat{Z_i}$ be their normalizations. Recall (\emph{c.f.} Lemma 3.1 in \cite{KL}) that the product of two reduced normal finite type $k$-schemes is again normal over perfect fields. Thus, 
$$\nu = \nu_1 \times \nu_2 : \overline{Z_1} \times \overline{Z_2} \to \widehat{Z_1} \times \widehat{Z_2} = \widehat{Z_1 \times Z_2} \subset \mathbb{A}^{e_1 + e_2} \times \square^{r_1 + r_2}$$ is a normalization of $\widehat{Z_1 \times Z_2}$. We will identify $ \overline{Z_1} \times \overline{Z_2}$ with $\overline{Z_1 \times Z_2}$.

For a closed point $\mathfrak{p} \in \supp \left( \nu^* V (\left< \mathfrak{m}_1, \mathfrak{m}_2 \right>)\right)$ in $\overline{Z_1} \times \overline{Z_2}$, there correspond closed points $\mathfrak{p}_i \in \supp \left( \nu_1 ^* V (\mathfrak{m}_i) \right)$ in $\overline{Z_i}$ for $i=1, 2$. Thus by the modulus conditions for $Z_1$ and $Z_2$, there exist indices $j_1 \in \left\{ 1, \cdots, r_1\right\}$ and $j_2 \in \left\{r_1 + 1 , \cdots, r_1 + r_2 \right\}$ such that
$$ 1- t_{j_1} \in (J_1) \cdot \mathcal{O}_{\overline{Z_1}, \mathfrak{p}_1}, ~~ 1- t_{j_2} \in (J_2) \cdot \mathcal{O} _{\overline{Z_2}, \mathfrak{p}_2}.$$ Via the natural maps
$$(J_i) \cdot \mathcal{O}_{\overline{Z}_i, \mathfrak{p}_i} \to (J_i) \cdot \mathcal{O}_{\overline{Z_1 \times Z_2}, \mathfrak{p}} \to \left< J_1, J_2 \right> \cdot \mathcal{O}_{\overline{Z_1 \times Z_2}, \mathfrak{p}} \ \mbox{ for } i =1, 2,$$ we have the both $1-t_{j_1}, 1-t_{j_2} \in \left< J_1, J_2 \right> \cdot \mathcal{O}_{\overline{Z_1 \times Z_2} , \mathfrak{p}}.$ This proves the modulus condition.\end{proof}

\begin{definition} Extend the above product $\mathbb{Z}$-bilinearly to obtain the \emph{concatenation product}
$$\times = \times_{(r_1, r_2)} : \mathcal{Z}_p (\Diamond_{r_1} (A_1)) \otimes \mathcal{Z}_q (\Diamond_{r_2} (A_2)) \to \mathcal{Z}_{p+q} \left( \Diamond_{r_1 + r_2} (A_1 \otimes _k A_2) \right).$$
\end{definition}

\begin{lemma}\label{partial is a derivation for concatenation}For $x \in \calZ_p (\Diamond_{r_1} (A_1))$ and $y \in \calZ_q (\Diamond_{r_2} (A_2))$, we have $$\partial (x \times y) = (\partial x) \times y + (-1)^{r_1} x \times (\partial y).$$
\end{lemma}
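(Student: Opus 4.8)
The plan is to compute $\partial(x \times y)$ directly from the definition of $\partial$ as the alternating sum of face operators and to match terms with the claimed right-hand side. By $\bbZ$-bilinearity it suffices to treat the case where $x = [Z_1]$ and $y = [Z_2]$ are classes of admissible irreducible closed subvarieties $Z_1 \subset \Diamond_{r_1}(A_1)$ and $Z_2 \subset \Diamond_{r_2}(A_2)$, with $Z_1 \times Z_2$ admissible in $\Diamond_{r_1+r_2}(A_1 \otimes_k A_2)$ by the previous lemma. First I would recall that under the identification $\Diamond_{r_1}(A_1) \times \Diamond_{r_2}(A_2) \simeq \bbA^{e_1+e_2} \times \square^{r_1+r_2}$, the coordinates $t_1, \dots, t_{r_1}$ come from the $\square^{r_1}$ factor and $t_{r_1+1}, \dots, t_{r_1+r_2}$ from the $\square^{r_2}$ factor, so that the face map $F_i^j$ on $\Diamond_{r_1+r_2}$ restricts to the face $F_i^j$ on $\Diamond_{r_1}(A_1)$ (leaving $Z_2$ untouched) when $1 \le i \le r_1$, and to the face $F_{i-r_1}^j$ on $\Diamond_{r_2}(A_2)$ (leaving $Z_1$ untouched) when $r_1 < i \le r_1+r_2$.

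The key geometric observation is the product formula for face intersections: $\partial_i^j(Z_1 \times Z_2) = (\partial_i^j Z_1) \times Z_2$ for $1 \le i \le r_1$, and $\partial_i^j(Z_1 \times Z_2) = Z_1 \times (\partial_{i-r_1}^j Z_2)$ for $r_1 < i \le r_1 + r_2$. This follows because the scheme-theoretic intersection of $Z_1 \times Z_2$ with the hyperplane $\{t_i = j\}$ is $(Z_1 \cap \{t_i = j\}) \times Z_2$ (resp. $Z_1 \times (Z_2 \cap \{t_{i-r_1} = j\})$), and the proper-intersection hypotheses built into admissibility of $Z_1$ and $Z_2$ guarantee that the associated cycles multiply correctly (no excess-intersection corrections), while dimension counts match. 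With this in hand I would write
\begin{eqnarray*}
\partial(x \times y) &=& \sum_{i=1}^{r_1+r_2} (-1)^i \big( \partial_i^0 - \partial_i^\infty \big)(Z_1 \times Z_2) \\
&=& \sum_{i=1}^{r_1} (-1)^i \big( (\partial_i^0 Z_1) - (\partial_i^\infty Z_1) \big) \times Z_2 + \sum_{i=r_1+1}^{r_1+r_2} (-1)^i Z_1 \times \big( (\partial_{i-r_1}^0 Z_2) - (\partial_{i-r_1}^\infty Z_2) \big).
\end{eqnarray*}
The first sum is exactly $(\partial x) \times y$. In the second sum, reindexing $i' = i - r_1$ turns $(-1)^i$ into $(-1)^{r_1}(-1)^{i'}$, so it becomes $(-1)^{r_1} x \times (\partial y)$, which is the desired identity.

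The main obstacle, and the only point requiring genuine care rather than bookkeeping, is justifying that the cycle-theoretic face intersection commutes with the product, i.e. that $[\, (Z_1 \times Z_2) \cap \{t_i = j\} \,] = [Z_1 \cap \{t_i=j\}] \times [Z_2]$ with the correct multiplicities. Here I would invoke the proper-intersection conditions in the definition of admissible cycles (condition (1) of Definition \ref{additive Chow complex for Artin}, applied to $\widehat{Z_1}$, $\widehat{Z_2}$, and $\widehat{Z_1 \times Z_2} = \widehat{Z_1} \times \widehat{Z_2}$): when $Z_1$ meets $F_i^j$ properly, the intersection product is just the naive scheme-theoretic intersection with its natural multiplicities, and for a product $Z_1 \times Z_2$ the intersection with a hypersurface pulled back from the $Z_1$-factor clearly has the same local structure along each component as the corresponding intersection on $Z_1$ alone. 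One also checks degenerate cycles are preserved under $\times$, so the identity descends to $\calZ_*$. Apart from this, everything is a routine sign and reindexing computation, which is presumably why the author simply writes ``Obvious.''
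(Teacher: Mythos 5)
Your proof is correct and is exactly the direct computation the author has in mind when writing ``This is obvious from the definition of $\partial$'': decompose $\partial$ on $\Diamond_{r_1+r_2}(A_1\otimes_k A_2)$ into the faces coming from each $\square$-factor, use the product formula $\partial_i^j(Z_1\times Z_2)=(\partial_i^j Z_1)\times Z_2$ (resp.\ $Z_1\times(\partial_{i-r_1}^j Z_2)$), and reindex to produce the $(-1)^{r_1}$ sign. The justification you give for the product formula via proper intersection and multiplicities is the right level of care, and nothing further is needed.
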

\begin{proof}This is obvious from the definition of $\partial$.\end{proof}

\begin{definition}Let $d \geq 0$ be an integer. An admissible irreducible closed subvariety $Z \in \mathcal{Z}_d (\Diamond_n (A_1 \otimes
A_2))$ is said to be \emph{$(A_1, A_2)$-decomposable} if for some permutation $\sigma \in \perm_n$ and integers $r_1,
r_2 \geq 1$ such that $r_1 + r_2 = n$, the variety $ \sigma \cdot Z $ is in the image of the concatenation
product $\times_{(r_1, r_2)}$. The subgroup generated by $(A_1, A_2)$-decomposable cycles will be denoted by
$\mathcal{Z}_d (\Diamond_n (A_1 \otimes _k A_2)) ^{dec(A_1, A_2)}$, or just $\mathcal{Z}_d (\Diamond_n (A_1
\otimes _k A_2))^{dec}$ if reference to $(A_1, A_2)$ is unnecessary.
\end{definition}

\begin{definition} Let $p, q \geq 0$ be integers. Let $Z_i \subset \Diamond_{r_i} (A_i)$, $i=1,
2$, be admissible irreducible closed subvarieties of dimension $p$ and $q$, respectively. Let $d= p+q$ and $n = r_1 + r_2$. Define the \emph{$(r_1, r_2)$-shuffle product} $\times_{sh(r_1,
r_2)}$ by
$$Z_1 \times_{sh(r_1, r_2)} Z_2 := \sum_{\sigma \in \perm_{(r_1, r_2)}} \sgn(\sigma) \sigma \cdot \left( Z_1
\times Z_2 \right) \in \calZ_d(\Diamond_{n} (A_1 \otimes_k A_2)).$$ When the reference to $(r_1, r_2)$ is
unnecessary, we may write $\times_{sh}$ instead of $\times_{sh(r_1, r_2)}$. By extending it
$\mathbb{Z}$-bilinearly, we have the $(r_1, r_2)$-shuffle product
$$\times_{sh} = \times_{sh(r_1, r_2)} : \mathcal{Z}_p (\Diamond_{r_1} (A_1)) \otimes \mathcal{Z}_q (\Diamond
_{r_2} (A_2)) \to \mathcal{Z}_{d} (\Diamond_{n} (A_1 \otimes _k A_2)).$$
\end{definition}

\begin{remark}\label{commutative graded}Note that in general we do not have $x \times y = (-1)^{r_1r_2} y \times x$, but we do have $x \times _{sh} y = (-1)^{r_1 r_2} y \times_{sh} x$. The proof just follows from the definition of the double shuffles and the shuffle product.\end{remark}

\begin{proposition}\label{partial derivation for shuffle}For the intersection boundary map $\partial
: \mathcal{Z}_{d}( \Diamond_{n} (A_1 \otimes _k A_2)) \to \mathcal{Z}_{d-1} (\Diamond_{n-1}
(A_1 \otimes _k A_2))$ and two admissible cycles $x \in \mathcal{Z}_p (\Diamond_{r_1} (A_1))$ and $y \in
\mathcal{Z}_{q} (\Diamond_{r_2} (A_2))$ with $p + q = d$ and $r_1 + r_2 = n$, we have
$$\partial (x \times_{sh} y ) = ( \partial x) \times_{sh} y + (-1)^{r_1} x \times_{sh} (\partial y).$$ In other
words, the boundary map $\partial$ is a graded derivation for the shuffle product $\times_{sh}$.
\end{proposition}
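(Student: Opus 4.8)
The plan is to reduce the statement to the already-established derivation property of $\partial$ for the concatenation product (Lemma \ref{partial is a derivation for concatenation}) together with a careful bookkeeping of how the face maps $\partial_i^j$ interact with the permutation action on the factors $\square^{r_1+r_2}$. Writing $\partial = \sum_{i=1}^n (-1)^i(\partial_i^0 - \partial_i^\infty)$ on $\calZ_d(\Diamond_n(A_1\otimes_k A_2))$, the key observation is that for a $(r_1,r_2)$-shuffle $\sigma$, applying a face map $\partial_i^j$ to $\sigma\cdot(Z_1\times Z_2)$ amounts to applying a face map to $Z_1\times Z_2$ in the coordinate slot $\sigma^{-1}(i)$, and then acting by the induced permutation on the remaining $n-1$ coordinates. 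Since the slot $\sigma^{-1}(i)$ lies either in $\{1,\dots,r_1\}$ (hitting a face of $Z_1$) or in $\{r_1+1,\dots,r_1+r_2\}$ (hitting a face of $Z_2$), one gets a decomposition of $\partial(\sigma\cdot(Z_1\times Z_2))$ into a "$Z_1$-part" and a "$Z_2$-part."

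First I would record precisely, for fixed $\sigma\in\perm_{(r_1,r_2)}$, the identity $\partial_i^j(\sigma\cdot W) = \sigma'\cdot (\partial^j_{\sigma^{-1}(i)} W)$ where $\sigma'\in\perm_{n-1}$ is the permutation obtained from $\sigma$ by deleting the entry $\sigma^{-1}(i)$ from the domain and the value $i$ from the range, with the remaining entries order-preservingly relabelled to $\{1,\dots,n-1\}$. Then I would sum over $i$ with the signs $(-1)^i$ and over $\sigma\in\perm_{(r_1,r_2)}$ with signs $\sgn(\sigma)$, and split the double sum according to whether $\sigma^{-1}(i)\le r_1$ or $>r_1$. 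The combinatorial content is then: as $(\sigma,i)$ ranges over pairs with $\sigma^{-1}(i)=a$ for fixed $a\in\{1,\dots,r_1\}$, the resulting $\sigma'$ ranges exactly once over all $(r_1-1,r_2)$-shuffles, and one must check that $\sgn(\sigma)\cdot(-1)^i$ equals $\sgn(\sigma')\cdot(-1)^a$ up to a global sign that is independent of $\sigma$; a symmetric statement holds for $a\in\{r_1+1,\dots,n\}$ with $(r_1,r_2-1)$-shuffles, producing the extra $(-1)^{r_1}$. Assembling the pieces, the $Z_1$-part becomes $\big(\sum_a (-1)^a(\partial_a^0-\partial_a^\infty)Z_1\big)\times_{sh} Z_2 = (\partial Z_1)\times_{sh} Z_2$ and the $Z_2$-part becomes $(-1)^{r_1} Z_1\times_{sh}(\partial Z_2)$, which is the claimed formula; extending $\bbZ$-bilinearly finishes it.

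The main obstacle I expect is the sign bookkeeping in the splitting of the double sum: one must verify that the sign $\sgn(\sigma)(-1)^i$ factors correctly as (sign depending only on $\sigma'$ and on whether we are in the $Z_1$ or $Z_2$ block) times (a uniform sign), and in particular that the $Z_2$-block acquires precisely the factor $(-1)^{r_1}$ and no spurious dependence on $\sigma$. This is the same phenomenon that makes $\partial$ a graded derivation for the ordinary shuffle product of cubes, so it is standard in spirit, but it requires attention. An alternative, perhaps cleaner, route is to note that the shuffle product is by definition $\times_{sh} = \big(\sum_{\sigma\in\perm_{(r_1,r_2)}}\sgn(\sigma)\,\sigma\cdot\big)\circ\times$ and to propagate $\partial$ through both the antisymmetrized permutation operator and the concatenation product, invoking Lemma \ref{partial is a derivation for concatenation} for the latter and a purely combinatorial lemma on faces of cubical sets for the former; I would phrase the write-up this way to keep the argument transparent. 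Either way the proof is elementary and the only real work is verifying the signs.
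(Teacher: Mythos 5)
Your proposal is correct and follows essentially the same route as the paper: the paper's proof simply states that the result "follows from Lemma \ref{partial is a derivation for concatenation}" by "rewriting the definition of $\partial$ and $\times_{sh}$ carefully minding signs" (citing Proposition 4.2.2 of Loday), which is exactly the reduction-to-concatenation-plus-sign-bookkeeping argument you spell out. Your write-up supplies the combinatorial details on face maps and residual permutations that the paper leaves implicit.
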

\begin{proof} It follows from Lemma \ref{partial is a derivation for concatenation}. This is just a matter of rewriting the definition of $\partial$ and $\times_{sh}$ carefully
midning signs.(\emph{c.f.} Proposition 4.2.2. on p. 123 in \cite{L})\end{proof}

Generally for a fixed integer $d \geq 0$ and $n \geq 0$, we can define the \emph{total shuffle product} as the
sum of all possible shuffle products:
\begin{eqnarray*}\times_{sh} := \sum_{r_1 + r_2 = n} \times _{sh (r_1, r_2)} : 
\bigoplus_{r_1 + r_2 = n} \bigoplus _{p + q = d} \mathcal{Z}_p (\Diamond_{r_1} (A_1)) \otimes \mathcal{Z}_{q}
(\Diamond_{r_2} (A_2)) \\ \to \mathcal{Z}_d (\Diamond_{n} (A_1 \otimes _k A_2)).\end{eqnarray*}

For the tensor product of two total additive Chow complexes $\mathcal{Z} (\Diamond_* (A_i)) = \bigoplus_{p \geq 0}
\mathcal{Z}_p (\Diamond_* (A_i))$, by the Proposition \ref{partial derivation for shuffle} we have a homomorphism of
complexes
\begin{eqnarray*}\times_{sh} : \left( \calZ(\Diamond_* (A_1)) \otimes \calZ(\Diamond_* (A_2)), \partial \otimes {\rm Id} + (-1)^*
{\rm Id} \otimes \partial \right) \to \left( \calZ (\Diamond_* (A_1 \otimes_k A_2 )), \partial \right).\end{eqnarray*}

\begin{corollary}Let $d = p+q$ and $n = r_1 + r_2$. The shuffle product $\times_{sh}$ induces homomorphisms
$$sh_* : ACH_p (k, r_1 ; A_1 ) \otimes ACH_q (k, r_2 ; A_2) \to ACH_{d} (k, n; A_1 \otimes _k A_2),$$
$$sh_*: ACH_* (k, * ; A_1 ) \otimes ACH_* (k, * ; A_2) \to ACH_* (k, *; A_1 \otimes_k A_2).$$

\end{corollary}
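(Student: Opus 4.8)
The plan is to deduce the corollary formally from Proposition \ref{partial derivation for shuffle}, using only the elementary fact that a morphism of complexes induces homomorphisms on homology. First I would record that, after extending it $\bbZ$-bilinearly and passing to the total complexes $\calZ(\Diamond_*(A_i)) := \bigoplus_{p\geq 0}\calZ_p(\Diamond_*(A_i))$, the shuffle product is the morphism of complexes
$$\times_{sh}\colon \bigl(\calZ(\Diamond_*(A_1))\otimes\calZ(\Diamond_*(A_2)),\ \partial\otimes{\rm Id}+(-1)^*{\rm Id}\otimes\partial\bigr)\ \longrightarrow\ \bigl(\calZ(\Diamond_*(A_1\otimes_k A_2)),\ \partial\bigr)$$
displayed just above; writing $D$ for the total differential on the left, this is exactly the content of Proposition \ref{partial derivation for shuffle} (which itself rests on Lemma \ref{partial is a derivation for concatenation}). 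Consequently $\times_{sh}$ induces a homomorphism $\times_{sh,*}$ on homology, and by definition of the additive Chow groups $H_n\bigl(\calZ(\Diamond_*(A_1\otimes_k A_2))\bigr)=\bigoplus_p ACH_p(k,n;A_1\otimes_k A_2)$.

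Next I would precompose with the canonical external product on homology. If $x\in\calZ_p(\Diamond_{r_1}(A_1))$ and $y\in\calZ_q(\Diamond_{r_2}(A_2))$ satisfy $\partial x=\partial y=0$, then $D(x\otimes y)=(\partial x)\otimes y+(-1)^{r_1}x\otimes(\partial y)=0$, so $x\otimes y$ is a $D$-cycle; and if $x=\partial x'$ then $x\otimes y=D(x'\otimes y)$, while if $y=\partial y'$ then $x\otimes y=(-1)^{r_1}D(x\otimes y')$. Hence the class $[x\otimes y]$ depends only on $[x]$ and $[y]$, and since the assignment is $\bbZ$-bilinear it descends to a homomorphism
$$\mu\colon H_*\bigl(\calZ(\Diamond_*(A_1))\bigr)\otimes H_*\bigl(\calZ(\Diamond_*(A_2))\bigr)\longrightarrow H_*\bigl(\calZ(\Diamond_*(A_1))\otimes\calZ(\Diamond_*(A_2))\bigr),\qquad [x]\otimes[y]\mapsto[x\otimes y].$$
I would then set $sh_*:=\times_{sh,*}\circ\mu$, after fixing presentations for the $A_i$ as in \S 2 so that the groups $ACH_p(k,r_i;A_i)$ appear as the summands of $H_{r_i}\bigl(\calZ(\Diamond_*(A_i))\bigr)$.

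It remains only to keep track of indices. An admissible $\partial$-cycle $x\in\calZ_p(\Diamond_{r_1}(A_1))$ represents a class in $ACH_p(k,r_1;A_1)$, and likewise $y$ a class in $ACH_q(k,r_2;A_2)$; under the identification $\Diamond_{r_1}(A_1)\times\Diamond_{r_2}(A_2)=\Diamond_{r_1+r_2}(A_1\otimes_k A_2)$, every summand $\sgn(\sigma)\,\sigma\cdot(x\times y)$ of $x\times_{sh}y$ lies in $\calZ_d(\Diamond_n(A_1\otimes_k A_2))$ with $d=p+q$ and $n=r_1+r_2$, so $x\times_{sh}y$ represents a class in $ACH_d(k,n;A_1\otimes_k A_2)$. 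This gives the first homomorphism of the statement, and the second is obtained by letting the bidegrees $(p,r_1)$ and $(q,r_2)$ vary, i.e.\ by applying the construction to the full direct-sum complexes. I do not expect a genuine obstacle here: all the real substance is in Proposition \ref{partial derivation for shuffle}, and what remains --- that a chain map induces a map on homology, together with the sign and degree bookkeeping above --- is routine. The only mildly delicate point is keeping the Eilenberg--Zilber signs in $D$ consistent with the signs in the definition of $\times_{sh}$ and in Proposition \ref{partial derivation for shuffle}; and one should also note, using the lemmas of \S 2, that the resulting homomorphisms are independent of the chosen presentations up to canonical isomorphism.
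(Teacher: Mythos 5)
Your proposal is correct and follows the same route the paper implicitly takes: Proposition \ref{partial derivation for shuffle} makes $\times_{sh}$ a chain map from the tensor product of the two additive Chow complexes (with the Koszul-signed total differential) to the complex for $A_1\otimes_k A_2$, and composing the induced map on homology with the canonical external product $[x]\otimes[y]\mapsto[x\otimes y]$ yields $sh_*$. The degree and sign bookkeeping you record is exactly what is needed, so the argument matches the paper's.
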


\begin{remark}One may wonder if this $sh_*$ is an isomorphism. Unfortunately to apply the argument of the proof of the
Eilenberg-Zilber theorem (\emph{c.f.} Theorem 4.2.5. in \cite{L} and Theorem 8.1. in \cite{ML}), one needs an analogue
of the deconcatenation operation on tensor coalgebras, which is an unlikely one for cycles because we do not in general
has a natural way of decomposing a closed subvariety $Z \subset \mathbb{A}^e \times \square^n$ into the concatenation of
two $Z_1 \subset \mathbb{A}^{e_1} \times \square^r$ and $Z_2 \subset \mathbb{A}^{e- e_1} \times \square^{n-r}$ even up to boundary. 
\end{remark}

\subsection{The wedge product}
Let $A_1, A_2 \in (Art/k)$ be Artin local $k$-algebras of $\edim (A_i) = 1$. Thus, $A_i \simeq k[x]/(x^{m_i})$ for some
$m_i \geq 1$.
The groups $ACH_p (X, n; A_i)$ are equal to the additive Chow groups with modulus $m_i$ in \cite{P1, R}. For an
indeterminate $w$, define a $k$-algebra homomorphism
$$\phi: k[w] \to A_1 \otimes_k A_2 \simeq k[x,y] / (x^m_1, y^{m_2}), w \mapsto xy.$$ Notice that $\ker \phi = (w^m)$
where $m = \min\{ m_1, m_2 \}$. 

\begin{definition}Define $\min\{A_1, A_2\}= \min\{k[x]/(x^{m_1}), k[x]/(x^{m_2}) \} := k[w]/(w^m)$. \end{definition}

Recall that the linear algebraic group $\mathbb{G}_m$ has the multiplication $\mu: \mathbb{G}_m \times \mathbb{G}_m \to
\mathbb{G}_m$. This extends to $\mu: \mathbb{A}^1 \times \mathbb{A}^1 \to \mathbb{A}^1$, and it gives a morphism of
varieties $\mu: \mathbb{A}^2 \times \square^n \to \mathbb{A}^1 \times \square^n$.

For a $k$-rational point $(x, y, t_1, \cdots, t_n) \in \mathbb{A}^2 \times \square^n = \Diamond_n (A_1 \otimes_k A_2)$,
this $\mu$ induces an action
$$\mu_* (x, y, t_1, \cdots, t_n) := (xy, t_1, \cdots, t_n) \in \mathbb{A}^1 \times \square^n.$$ For a general admissible
closed subvariety $Z \subset \mathbb{A}^2 \times \square^n = \Diamond_n (A_1 \otimes_k A_2)$, since $\mu$ is not a
closed morphism (but it is an open morphism), the set $\mu(Z)$ is not necessary closed. Thus, we define 
$$\mu_* (Z) := cl (\mu (Z)) \subset \mathbb{A}^1 \times \square^n$$ where $cl ( \cdot )$ means the Zariski closure in
the space. One difficulty is that $\mu_*$ does not always respect the admissibility conditions, especially the modulus
condition. 

\begin{lemma}\label{domain of product}Let $Z \subset \Diamond_n (A_1
\otimes_k A_2)$ be an admissible irreducible closed subvariety. Let $(x, y, t_1, \cdots, t_n) \in \mathbb{A}^2 \times
\square^n = \Diamond_n (A_1 \otimes_k A_2)$ be the coordinates. Let $A = \min \{ A_1, A_2 \}$.

 If $Z$ is a $(A_1, A_2)$-decomposable cycle, then $\mu_*(Z)$ is admissible in $\Diamond_n (A)$.

\end{lemma}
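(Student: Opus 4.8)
The plan is to reduce to the case where $Z$ itself lies in the image of the concatenation product, using the permutation action, and then to verify that the multiplication map $\mu$ carries the product modulus condition to the modulus condition for $A = \min\{A_1, A_2\}$. First I would observe that the permutation action $\sigma \in \perm_n$ only shuffles the $\square^n$-coordinates $t_1, \dots, t_n$ and fixes the $\mathbb{A}^2$-coordinates $(x,y)$; since $\mu_*$ only touches $(x,y)$, it commutes with any such $\sigma$ in the sense that $\sigma \cdot \mu_*(Z) = \mu_*(\sigma \cdot Z)$ (as cycles, after taking Zariski closures). The admissibility conditions — proper intersection with faces and the modulus condition — are invariant under the $\perm_n$-action, so it suffices to prove the statement after replacing $Z$ by $\sigma \cdot Z$. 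Hence we may assume $Z = Z_1 \times Z_2$ with $Z_i \subset \Diamond_{r_i}(A_i)$ admissible, $r_1 + r_2 = n$, $r_1, r_2 \geq 1$.

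Next I would analyze $\mu_*(Z_1 \times Z_2)$. Write presentations $A_i \simeq k[x_i]/(x_i^{m_i})$ and $A_1 \otimes_k A_2 \simeq k[x_1, x_2]/(x_1^{m_1}, x_2^{m_2})$, with $\phi\colon k[w]/(w^m) \to A_1 \otimes_k A_2$ given by $w \mapsto x_1 x_2$, where $m = \min\{m_1, m_2\}$. On varieties, $\mu$ restricts $\widehat{Z_1 \times Z_2} = \widehat{Z_1} \times \widehat{Z_2}$ to its image under $(x_1, x_2, \vec t\,) \mapsto (x_1 x_2, \vec t\,)$, and $\mu_*(Z_1 \times Z_2)$ is the Zariski closure. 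I would take the normalization $\nu = \nu_1 \times \nu_2 \colon \overline{Z_1} \times \overline{Z_2} \to \widehat{Z_1 \times Z_2}$ as in the earlier lemma, let $W = \mu_*(Z_1 \times Z_2)$, let $\overline{W}$ be its normalization, and produce a dominant map $\overline{Z_1} \times \overline{Z_2} \dashrightarrow \overline{W}$ (defined on the open locus where $\mu$ is defined on $\overline{Z_1} \times \overline{Z_2}$, which is everything since $\mu$ extends to all of $\mathbb{A}^2$), factoring through $\overline{W}$ by the universal property of normalization; call it $\psi$. The key point for faces is that $\mu$ is a closed immersion when restricted to the faces $t_i \in \{0, \infty\}$ in the $(x_1 x_2)$-direction (it only collapses the $\mathbb{A}^2$-direction, not the $\square^n$-directions), so proper intersection with faces for $W$ follows from that for $Z_1 \times Z_2$ together with a dimension count; if $\mu$ drops the dimension this is automatic, and if it is generically finite on $Z_1 \times Z_2$ the face intersections are preserved.

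For the modulus condition — the main obstacle — I would argue as follows. Let $\mathfrak{q} \in \overline{W}$ be a closed point over $\supp \nu_W^* V(\calI(A))$, i.e.\ a point where $x_1 x_2 = 0$ on $W$. Pick a closed point $\mathfrak{p} = (\mathfrak{p}_1, \mathfrak{p}_2) \in \overline{Z_1} \times \overline{Z_2}$ lying over $\mathfrak{q}$ under $\psi$ (possible since $\psi$ is dominant, after restricting to a suitable point of the closure); at $\mathfrak{p}$ we have $x_1 x_2 = 0$, so $x_1 = 0$ at $\mathfrak{p}_1$ or $x_2 = 0$ at $\mathfrak{p}_2$, i.e.\ $\mathfrak{p}_i \in \supp \nu_i^* V(\mathfrak{m}_i)$ for at least one $i$. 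Say $\mathfrak{p}_1 \in \supp \nu_1^* V(\mathfrak{m}_1)$; then the modulus condition for $Z_1$ gives an index $j_1 \in \{1, \dots, r_1\}$ with $1 - t_{j_1} \in (J_1) \cdot \calO_{\overline{Z_1}, \mathfrak{p}_1} = (x_1^{m_1}) \cdot \calO_{\overline{Z_1}, \mathfrak{p}_1}$. The subtlety is that I need $1 - t_{j_1} \in (w^m) \cdot \calO_{\overline{W}, \mathfrak{q}} = ((x_1 x_2)^m) \cdot \calO_{\overline{W}, \mathfrak{q}}$, i.e.\ I must convert a bound in powers of $x_1$ alone into a bound in powers of the product $x_1 x_2$. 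This works because on $\overline{W}$, near $\mathfrak{q}$, the function $x_1 x_2$ vanishes, and — here I would use that $Z_1 \times Z_2$ is $(A_1, A_2)$-decomposable so the two factors are genuinely independent — along the locus where $x_2 \neq 0$ the functions $x_1$ and $x_1 x_2$ generate the same ideal locally, while along $x_2 = 0$ one needs $x_2^{m_2} | 1 - t_{j_2}$ for some $j_2$ in the second block by the modulus condition for $Z_2$; combining, at every point of $\supp \nu_W^* V(\calI(A))$ one of $1 - t_{j_1}$ or $1 - t_{j_2}$ is divisible by $(x_1 x_2)^m$, since $m \leq m_i$. I would make this precise by a local computation on the normalization $\overline{Z_1} \times \overline{Z_2}$, pushing down via $\psi$, being careful that $\psi$ may contract the $x_2$-direction: the case analysis "$x_2 = 0$ vs.\ $x_2 \neq 0$ at $\mathfrak{p}_2$" is exactly what makes the decomposability hypothesis essential, and handling the stratum $x_1 = x_2 = 0$ (where both blocks must supply a modulus coordinate) carefully is where the real work lies.
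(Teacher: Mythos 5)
Your outline follows the paper's own route: reduce by the $\perm_n$-action to the case $Z = Z_1 \times Z_2$, take normalizations, push down along $\mu$, and trace the modulus condition through. The difficulty you flag at the end — converting divisibility by a power of $x_1$ or $x_2$ alone into divisibility by $(x_1x_2)^m$ on $\overline{\mu(Z)}$ — is exactly the crux, and you are right to be suspicious, because the ``combining'' you sketch does not close: at a point $\mathfrak{q}$ of $\overline{Z_1}\times\overline{Z_2}$ where \emph{both} $x_1 = 0$ and $x_2 = 0$, the two factors only give $1-t_{j_1}\in(x_1^{m_1})$ and $1-t_{j_2}\in(x_2^{m_2})$; neither element is, by itself, in $((x_1x_2)^m)$, and their product lying there does not help, since the modulus condition asks for a single coordinate. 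Concretely, take $A_1=A_2=k[x]/(x^2)$ so $m=2$, and $Z_i=\{1-t_i = x_i^2\}\subset\mathbb{A}^1\times\square$ (admissible: $1-t_i = x_i^2 \in (x_i^2)$). Then $\mu(Z_1\times Z_2)$ is dense in the quadric $H = \{(1-t_1)(1-t_2)=w^2\}$, which is normal at $(w,t_1,t_2)=(0,1,1)$; there, in the local ring of $H$, neither $1-t_1$ nor $1-t_2$ lies in $(w^2)$ (if $1-t_1 = w^2 f$ then $(1-t_2)f = 1$, contradicting that $1-t_2$ is in the maximal ideal). So the ``real work'' you defer cannot be done by the case analysis as stated.

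For comparison, the paper's own proof has the same gap, papered over by a diagram that displays a map $(x^{m_1},y^{m_2})\cdot\calO_{\overline{Z},\mathfrak{q}}\to(w^m)\cdot\calO_{\overline{\mu(Z)},\mathfrak{p}}$. But $\overline{\mu}$ goes from $\overline{Z}$ to $\overline{\mu(Z)}$, so the induced ring map is $\calO_{\overline{\mu(Z)},\mathfrak{p}}\to\calO_{\overline{Z},\mathfrak{q}}$ — the opposite direction — and there is no natural map of ideals in the direction the paper needs. Correcting the direction, one gets $(w^m)\calO_{\overline{Z},\mathfrak{q}}=(x^my^m)\calO_{\overline{Z},\mathfrak{q}}\subset(x^{m_1},y^{m_2})\calO_{\overline{Z},\mathfrak{q}}$, which is a containment in the wrong sense for concluding anything about $\calO_{\overline{\mu(Z)},\mathfrak{p}}$. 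So your proposal is honest about where it is incomplete, but the step it leaves open is not a routine verification: it fails with the pointwise modulus condition as stated. (For the divisor-theoretic ``sum modulus'' of R\"ulling the statement is salvageable, because $\mathrm{div}(1-t_{j_1})+\mathrm{div}(1-t_{j_2})\geq m\,\mathrm{div}(x_1)+m\,\mathrm{div}(x_2)\geq m\,\mathrm{div}(x_1x_2)$ as divisors on the normalization, and this descends; but that is a different argument from the one you, or the paper, wrote.) I would also push back lightly on your faces remark: $\mu$ restricted to a face $\{t_i=0\}$ is still the $2$-to-$1$ map $\mathbb{A}^2\times\square^{n-1}\to\mathbb{A}^1\times\square^{n-1}$, not a closed immersion; properness of face intersections does follow, but from the dimension count, not from any immersion property.
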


\begin{proof}We may assume that $Z$ is irreducible and $Z = Z_1 \times Z_2$ for some irreducible $Z_1 \in \calZ_p
(\Diamond_{r_1}(A_1))$ and $Z_2 \in \calZ_{q} (\Diamond_{r_2} (A_2))$, where $d = p+q$ and $n = r_1 + r_2$. Note that the Zariski closure of $cl (\mu(Z))$ in
$\mathbb{A}^1 \times \left( \mathbb{P}^1 \right)^n$ is equal to the Zariski closure of $\mu(Z)$ in the same space. Let
$\widehat{\mu(Z)}$ be the Zariski closure, and let $\nu: \overline{\mu(Z)} \to \widehat{\mu(Z)} \subset \mathbb{A}^1
\times \left( \mathbb{P}^1 \right)^n$ be its normalization. Consider the normalization $\nu' : \overline{Z} \to
\widehat{Z}$ of the Zariski closure of $Z$ in $\mathbb{A}^2 \times \left( \mathbb{P}^1 \right)^n$. By the universality of normalization $\nu$, we have a map $\overline{\mu} : \overline{Z} \to \overline{\mu(Z)}$ that fits into the commutative diagram:
$$\xymatrix{ \overline{Z} \ar[r] ^{\nu'} \ar[d]^{\overline{\mu}} & \widehat{Z} \ar[d] ^{\mu} \ar[r] &
\mathbb{A}^2 \times \left( \mathbb{P}^1 \right)^n \ar[d] ^{\mu} \\
\overline{\mu(Z)} \ar[r]^{\nu} & \widehat{\mu(Z)} \ar[r] & \mathbb{A}^1 \times \left( \mathbb{P}^1
\right)^n}$$

Let $(w, t_1, \cdots, t_n) \in \mathbb{A}^1 \times \left( \mathbb{P}^1 \right)^n$ be the coordinates.

Let $\mathfrak{p} \in \supp \left( \nu^* \{ w = 0 \} \right)$ be a closed point on $\overline{\mu(Z)}$. Then,
$\overline{\mu} ^{-1} (\mathfrak{p})$ lies in $\supp ({\nu'}^*\{ x=0\} + {\nu'}^* \{ y=0 \})$. Pick any point
$\mathfrak{q} \in \overline{\mu} ^{-1} (\mathfrak{p})$. Since $Z$ is $(A_1, A_2)$-decomposable, we have either
$$1- t_{j_1} \in (x^{m_1}) \cdot \mathcal{O}_{\overline{Z_1}, \pi_1 (\mathfrak{q})} \mbox{ for some } 1 \leq j_1 \leq
r_1,
$$ or
$$ 1- t_{j_2} \in (y^{m_2}) \cdot \mathcal{O}_{\overline{Z_2}, \pi_2 (\mathfrak{q})} \mbox{ for some } r_1 + 1 \leq j_2
\leq n,$$ where $\pi_i$ are the projections from $\Diamond_{n}(A_1 \otimes_k A_2)$ to $\Diamond_{r_i} (A_i)$
for $i=1, 2$. But, either case, we have maps
$$\xymatrix{ (x^{m_1})\cdot \mathcal{O}_{\overline{Z_1}, \pi_1 (\mathfrak{q})} \ar[dr] &  & \\
& (x^{m_1}, y^{m_2})\cdot \mathcal{O}_{\overline{Z}, \mathfrak{q}} \ar[r] & (w^{m}) \cdot \mathcal{O}_{\overline{\mu(Z)}, \mathfrak{p}} \\
(y^{m_2})\cdot \mathcal{O}_{\overline{Z_2}, \pi_2 (\mathfrak{q})} \ar[ru] & &}$$ where $m = \min\{ m_1, m_2 \}$, so that the image of $1-t_{j_1}$ for some $1 \leq j_1 \leq r_1$ or $1- t_{j_2}$ for some $r_1 + 1 \leq j_2 \leq n$ lies in $(w ^m)\cdot \mathcal{O}_{\overline{\mu(Z)}, \mathfrak{p}}$. This proves the modulus condition. Proper intersections with faces are obvious.\end{proof}

\begin{lemma}The product $\mu_* : \calZ_d (\Diamond_{n} (A_1 \otimes_k A_2)) ^{dec (A_1, A_2)} \to \calZ_d (\Diamond_n (A))$ is $\partial$-equivariant, where $A= \min\{ A_1, A_2 \}$. Thus, $\mu_*$ induces a map 
$$\mu_*: ACH_d (\Diamond_n (A_1 \otimes_k A_2))^{dec(A_1, A_2)} \to ACH_d (\Diamond_n (A)).$$
\end{lemma}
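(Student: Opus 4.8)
The plan is to verify the two assertions separately: first that $\mu_*$ commutes with the intersection boundary map $\partial$ on the subgroup of $(A_1,A_2)$-decomposable cycles, and then that a $\partial$-equivariant map of complexes automatically descends to homology. The second point is purely formal, so the content is in the first. The key structural fact I would exploit is the commutative diagram from Lemma \ref{domain of product}: for an irreducible decomposable cycle $Z = Z_1 \times Z_2$ one has a dominant morphism $\widehat\mu : \widehat Z \to \widehat{\mu(Z)}$ compatible with the projections to $\mathbb A^2 \times (\mathbb P^1)^n$ and $\mathbb A^1 \times (\mathbb P^1)^n$ respectively, and $\mu$ acts as the identity on the $\square$-coordinates $t_1,\dots,t_n$. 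Because the faces $F_i^j$ are cut out purely by the conditions $t_i = j$ (for $j \in \{0,\infty\}$), which involve only the cube coordinates that $\mu$ leaves untouched, the scheme-theoretic preimage under $\mu$ of the face $F_i^j$ inside $\mathbb A^1 \times \square^n$ is exactly the face $F_i^j$ inside $\mathbb A^2 \times \square^n$. This is what forces $\mu_*$ to interchange with each partial face operator $\partial_i^j$, and hence with $\partial = \sum_i (-1)^i(\partial_i^0 - \partial_i^\infty)$.

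Concretely, first I would reduce to an irreducible decomposable generator $Z$; after applying a permutation $\sigma \in \perm_n$ (which is $\partial$-equivariant up to the usual shuffle signs, and which $\mu_*$ respects since $\mu$ ignores the cube coordinates the permutation acts on), we may assume $Z = Z_1 \times Z_2$ is literally in the image of the concatenation product. Second, I would establish the identity $\mu_* \circ \partial_i^j = \partial_i^j \circ \mu_*$ on such $Z$. For the face that meets $Z$ properly, the fiber square defining $\partial_i^j(Z)$ maps, via $\mu$, to the fiber square defining $\partial_i^j(\mu_* Z)$, and one checks the cycle-theoretic multiplicities agree: here one uses that $\widehat\mu$ restricted to $\widehat Z \cap F_i^j$ is again dominant onto $\widehat{\mu(Z)} \cap F_i^j$ and that the generic degrees match because the $\square$-coordinate $t_i$ is a common coordinate on source and target that $\mu$ preserves. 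Third, I would sum over $i,j$ with the signs $(-1)^i$ to conclude $\mu_*\partial = \partial \mu_*$, extend $\bbZ$-bilinearly, and invoke Lemma \ref{domain of product} once more to know the output lands in the admissible group $\calZ_d(\Diamond_n(A))$ with $A = \min\{A_1,A_2\}$.

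Finally, since $\mu_* : \calZ_\bullet(\Diamond_\bullet(A_1\otimes_k A_2))^{\operatorname{dec}(A_1,A_2)} \to \calZ_\bullet(\Diamond_\bullet(A))$ is a morphism of complexes, it carries $\partial$-boundaries to $\partial$-boundaries and $\partial$-cycles to $\partial$-cycles, so it induces the claimed map on homology $ACH_d(\Diamond_n(A_1\otimes_k A_2))^{\operatorname{dec}(A_1,A_2)} \to ACH_d(\Diamond_n(A))$; this last step is a one-line formality.

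I expect the main obstacle to be the cycle-theoretic bookkeeping in the second step: $\mu$ is not proper (only open), so $\mu_*$ is defined via Zariski closure, and one must make sure that taking closures commutes with intersecting with faces and with forming the associated cycle of a fiber product — in other words that the multiplicities computed downstairs from $\mu_*Z$ agree with the pushforward of the multiplicities computed upstairs from $Z$. The saving grace is that all faces are defined by equations in the untouched cube coordinates, and that $Z$ is decomposable so that the part of $Z$ approaching $\{w = 0\}$ has its modulus condition controlled exactly as in Lemma \ref{domain of product}; this keeps the relevant loci away from the fibers where $\mu$ degenerates. Making the degree/multiplicity comparison precise — perhaps by passing to normalizations and using that $t_i$ is a shared rational function — is the genuinely technical point, though the author's ``$\partial$-equivariant'' phrasing and the analogous arguments in \cite{KL} suggest it can be handled by the same generic-flatness and properness-of-the-relevant-faces considerations used there.
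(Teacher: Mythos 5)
The paper offers no argument at all here (the proof is simply ``Obvious''), so you are essentially supplying the elaboration the author left implicit, and your reasoning is the right one: $\mu$ acts as the identity on the cubical coordinates $t_1,\dots,t_n$, the faces $F_i^j$ are cut out solely by conditions on those coordinates, so $\mu^{-1}(F_i^j)=F_i^j$ in the source, and this forces $\mu_*$ to intertwine with each $\partial_i^j$ (after reducing by a permutation to an honest product $Z_1\times Z_2$) and hence with $\partial$; the descent to homology is then formal. You are also right to flag the genuine technical point that the author passes over: $\mu$ is not proper, $\mu_*$ is defined via Zariski closure, and the equality $cl(\mu(Z\cap F_i^j))=cl(\mu(Z))\cap F_i^j$ with matching multiplicities is not a free statement. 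It does hold here, and it is worth noting that what makes it work is exactly the admissibility hypothesis: the modulus condition forces admissible cycles to avoid $\{x=0\}$, $\{y=0\}$, and likewise $cl(\mu(Z))$ to avoid $\{w=0\}$, so the only locus where $\mu$ could degenerate or where closure could add new points in a face is excluded. Your proposal is correct and in the spirit of the paper's (absent) proof.
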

\begin{proof}Obvious.\end{proof}

\begin{proposition}Let $A \in (Art/k)$ be an Artin local $k$-algebra with $\edim (A) = 1$. Then the map $\wedge:= \mu_* \circ sh_*$ makes $ACH_* (k, *; A)$ a commutative graded algebra.
\end{proposition}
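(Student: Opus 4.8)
The plan is to show that the three algebraic structures we need — associativity, graded commutativity, and existence of a unit — all descend from corresponding statements about the shuffle product $sh_*$ on tensor products of additive Chow complexes, composed with the collapse map $\mu_*$. Since $\edim(A)=1$, we have $A \simeq k[x]/(x^m)$ for some $m \geq 1$, and $A \otimes_k A \simeq k[x,y]/(x^m,y^m)$ with $\min\{A,A\} = k[w]/(w^m) \simeq A$; so $\wedge = \mu_* \circ sh_*$ indeed lands in $ACH_*(k,*;A)$ after applying the earlier lemmas that $\mu_*$ preserves admissibility on decomposable cycles and is $\partial$-equivariant there. The key point is that the image of $sh_*$ consists of $(A,A)$-decomposable cycles (by construction each summand $\sigma \cdot (Z_1 \times Z_2)$ is, up to the permutation $\sigma^{-1}$, in the image of the concatenation product), so $\mu_*$ is defined on it.

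First I would establish graded commutativity. By Remark \ref{commutative graded}, $x \times_{sh} y = (-1)^{r_1 r_2} y \times_{sh} x$ on the level of cycles in $\calZ_d(\Diamond_n(A \otimes_k A))$, and the multiplication map $\mu$ on $\mathbb{G}_m \times \mathbb{G}_m$ is symmetric (i.e. $\mu \circ \tau = \mu$ where $\tau$ swaps the two $\mathbb{A}^1$ factors), so $\mu_*$ intertwines the swap of the two tensor factors of $A$ with the identity. Hence $\xi \wedge \eta = (-1)^{r_1 r_2} \eta \wedge \xi$ already at the cycle level, and a fortiori on $ACH$. Second, associativity: I would use that the iterated shuffle products $(sh_* \otimes \mathrm{Id}) \circ sh_*$ and $(\mathrm{Id} \otimes sh_*) \circ sh_*$ both equal the triple shuffle product $\calZ_p(\Diamond_{r_1}(A)) \otimes \calZ_q(\Diamond_{r_2}(A)) \otimes \calZ_s(\Diamond_{r_3}(A)) \to \calZ_\bullet(\Diamond_{r_1+r_2+r_3}(A^{\otimes 3}))$ given by summing over $\perm_{(r_1,r_2,r_3)}$ with signs — this is the standard fact that shuffles compose to shuffles (the combinatorial content is exactly of the type in Lemmas \ref{0-1}, \ref{2-0}, \ref{3-0}, now applied to triple shuffles without the leading ``$1$''). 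Then I would note that $\mu$ is associative as a map $\mathbb{A}^3 \to \mathbb{A}^1$, $(x,y,z) \mapsto xyz$, so the two ways of iterating $\mu_*$ agree, and compatibility of $\mu_*$ with concatenation/shuffle permutations gives $(\xi \wedge \eta) \wedge \zeta = \xi \wedge (\eta \wedge \zeta)$ on $ACH$. Finally, the unit: the class of the point in $\calZ_0(\Diamond_0(k))$ — or rather, since $\Diamond_0$ is a point and we need a cycle in $\Diamond_0(A)$ — one takes the evident degree-zero generator, checks that concatenating with it and applying $\mu_*$ (which on the $\mathbb{A}^1$ factor multiplies by the unit $1 \in \mathbb{G}_m$) is the identity.

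The main obstacle I expect is purely bookkeeping: one must be careful that $\mu_*$, which is only defined on the decomposable subcomplex $\calZ_d(\Diamond_n(A_1 \otimes_k A_2))^{dec}$ and only known to be $\partial$-equivariant there, interacts correctly with the permutation actions $\sigma \cdot (-)$ appearing inside the definition of $\times_{sh}$, and that all the coordinate identifications $\mathbb{A}^{e_1} \times \square^{r_1} \times \mathbb{A}^{e_2} \times \square^{r_2} \simeq \mathbb{A}^{e_1+e_2} \times \square^{r_1+r_2}$ are compatible when iterated three times. A subtle point is that one wants these identities at the level of cycles (not just homology) wherever possible, reserving passage to $ACH_*$ only for the step where $sh_*$ and $\mu_*$ are replaced by their induced maps $sh_*$ and $\mu_*$ on homology — the $\partial$-equivariance statements already proven (Lemma \ref{partial is a derivation for concatenation}, Proposition \ref{partial derivation for shuffle}, and the $\partial$-equivariance of $\mu_*$) are precisely what licenses this. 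I would also remark that graded commutativity together with associativity formally forces the sign conventions to be the standard CDGA ones, so no independent sign check beyond Remark \ref{commutative graded} is needed.

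\begin{proof}
Write $A \simeq k[x]/(x^m)$, so $A \otimes_k A \simeq k[x,y]/(x^m,y^m)$ and $\min\{A,A\} \simeq A$. For $\xi \in ACH_p(k,r_1;A)$ and $\eta \in ACH_q(k,r_2;A)$, the cycle $sh_*(\xi \otimes \eta)$ lies in $\calZ_{p+q}(\Diamond_{r_1+r_2}(A \otimes_k A))^{dec(A,A)}$, since each summand $\sigma \cdot (Z_1 \times Z_2)$ of the shuffle product becomes, after applying $\sigma^{-1}$, an element in the image of the concatenation product $\times_{(r_1,r_2)}$. Hence $\mu_*$ is defined on it, and by the two preceding lemmas $\wedge = \mu_* \circ sh_*$ induces a well-defined homomorphism
$$\wedge : ACH_p(k,r_1;A) \otimes ACH_q(k,r_2;A) \to ACH_{p+q}(k,r_1+r_2;A).$$

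\emph{Graded commutativity.} By Remark \ref{commutative graded}, at the level of cycles we have $x \times_{sh} y = (-1)^{r_1 r_2} y \times_{sh} x$ once we identify $\Diamond_{r_1}(A) \times \Diamond_{r_2}(A)$ with $\Diamond_{r_2}(A) \times \Diamond_{r_1}(A)$ through the block transposition of the $\square$-factors together with the swap of the two $\mathbb{A}^1$-factors. The multiplication $\mu : \mathbb{A}^2 \to \mathbb{A}^1$, $(x,y) \mapsto xy$, is invariant under this swap, so $\mu_*$ identifies the two orderings of the tensor factors of $A$. Therefore $\xi \wedge \eta = (-1)^{r_1 r_2} \eta \wedge \xi$ already on cycles, hence on $ACH_*(k,*;A)$.

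\emph{Associativity.} For $\xi, \eta, \zeta$ of cube-dimensions $r_1, r_2, r_3$, both iterated shuffle products $(sh_* \otimes \mathrm{Id}) \circ sh_*$ and $(\mathrm{Id} \otimes sh_*) \circ sh_*$ equal the triple shuffle product
$$\sum_{\nu \in \perm_{(r_1,r_2,r_3)}} \sgn(\nu)\, \nu \cdot \left( Z_1 \times Z_2 \times Z_3 \right) \in \calZ_\bullet(\Diamond_{r_1+r_2+r_3}(A^{\otimes 3})),$$
by the same combinatorial bijections between composites of shuffles and multiple shuffles used for the triple shuffles $\perm_{(1,r,s)}$ above (now without the leading ``$1$''), together with $\sgn(\nu) = \sgn(\sigma)\sgn(\tau)$ under those bijections. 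The multiplication $\mathbb{A}^3 \to \mathbb{A}^1$, $(x,y,z)\mapsto xyz$, is associative, so the two ways of iterating $\mu_*$ coincide; since $\mu_*$ commutes with the permutation actions appearing in the shuffle sum and with the coordinate identifications, we get $(\xi \wedge \eta)\wedge \zeta = \xi \wedge (\eta \wedge \zeta)$ in $ACH_*(k,*;A)$.

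\emph{Unit.} Let $\mathbf{1} \in \calZ_0(\Diamond_0(A))$ be the class of the unique point of $\Diamond_0(A)$ with $A$-coordinate $1 \in \mathbb{G}_m$; it is admissible (there are no faces in $\Diamond_0$) and $\partial$-closed. Concatenation with $\mathbf{1}$ is, up to the identification $\Diamond_{r}(A) \times \Diamond_0(A) \simeq \Diamond_r(A)$, the identity; the shuffle product with $\mathbf{1}$ reduces to the single trivial shuffle, hence also the identity; and applying $\mu_*$ multiplies the $\mathbb{A}^1$-coordinate by $1$, hence does nothing. Thus $[\mathbf{1}] \wedge \xi = \xi = \xi \wedge [\mathbf{1}]$, and $[\mathbf{1}]$ is a two-sided unit. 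Combining the three properties, $(ACH_*(k,*;A), \wedge)$ is a commutative graded algebra.
\end{proof}
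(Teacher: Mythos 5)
Your proof is correct and follows the same overall strategy as the paper's: reduce well-definedness to Lemma \ref{domain of product} via the observation that shuffles of concatenations are decomposable, and get graded commutativity from Remark \ref{commutative graded} together with the symmetry of $\mu$. The main difference is one of completeness: the paper's proof consists of exactly those two points and says nothing at all about associativity or a unit, whereas you explicitly establish both. Since the proposition asserts a ``commutative graded algebra,'' associativity really does need to be said (the paper leaves it implicit), and your reduction to the standard shuffle-compose-to-shuffle combinatorics plus associativity of $\mu$ is the right argument.

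One point worth a sentence more of care in your associativity step: $\mu_*$ is not a proper pushforward but is defined on each irreducible $Z$ as the Zariski closure $cl(\mu(Z))$, so the claim that ``the two ways of iterating $\mu_*$ coincide'' is the assertion that, for a decomposable $W$ and admissible $Z$,
$$
cl\bigl(\mu\bigl(cl(\mu(W)) \times Z\bigr)\bigr) = cl\bigl(\mu_3(W \times Z)\bigr),
$$
where $\mu_3(x,y,z)=xyz$. This holds because $\mu(W)\times Z$ is dense in $cl(\mu(W))\times Z$ and taking image followed by closure is insensitive to passing to a dense open. It is a small point but should be noted, since the operation $\mu_*$ as defined in the paper is by bare closure rather than cycle-theoretic pushforward, so its functoriality is not automatic from general principles. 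With that observation supplied, your argument is complete and in fact fills a gap the paper's own one-paragraph proof leaves unaddressed.
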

\begin{proof}We apply the above discussion with $A_1 = A_2 = A$ and consider the shuffle product
$$sh_*: ACH_* (k, *;A) \otimes ACH_* (k, *; A) \to ACH_* (k, *; A \otimes_k A).$$ The map $\mu_*$ is not necessarily defined on all of $ACH_* (k, *; A \otimes_k A)$, but by Lemma \ref{domain of product}, it is defined on the image of $sh_*$ so that $\wedge$ is well-defined: 
$$\wedge= \mu_* \circ sh_*: ACH_* (k, *;A) \otimes ACH_* (k, *; A) \to ACH_* (k, *; A),$$
where $\min\{A, A \} = A$. That this is commutative in graded sense follows from Remark \ref{commutative graded}.\end{proof}

\section{CDGA of additive Chow groups}

In \S 3, we defined the motivic Connes boundary operator $\delta$ on the reduced additive Chow complex that induces
$$\delta_* : ACH_p (k, n; A) \to ACH_p (k, n+1; A), \mbox{ with } \delta_* ^2 = 0,$$ when $\edim (A) = 1$. On the other hand, in \S 5 we proved that $\left( ACH_* (k, *; A), \wedge \right)$ is a commutative graded algebra. In this section, we show that $\delta_*$ is actually a derivation for $\wedge$ thus $\left( ACH_* (k, *; A), \wedge, \delta_* \right)$ is the CDGA (commutative differential graded algebra). This result generalizes R\"ulling's theorem \cite{R} that $ACH_0 (k, *; A)$ is a CDGA of generalized de Rham-Witt complex. 

The central result is the construction of the ``cyclic shuffle product" that gives Proposition \ref{Connes is a derivation}. This construction is motivated by Chapter 4 of \cite{L}, but unlike this reference we do not actually use what Loday calls the cyclic shuffle product, although we use this name for a psychological reason. In our construction we use triple shuffles in $\perm_{(1, r_1, r_2)}$. Recall the following result from Lemma 2.5 in \cite{P2}: (\emph{c.f.} Proposition 6.3 in \cite{BE2}. This is a variation of a cycle of B. Totaro in \cite{T}.)
\begin{lemma}\label{cycle C_2}For $a \in k, b_1, b_2 \in k^{\times}$, define a $1$-cycles in $\mathcal{Z}_1 (\Diamond_2 (k[x]/ (x^2)))$
$$C_2 ^{a, (b_1, b_2)}:= \tuborg \left\{ \left( \frac{1}{a} , t, \frac{ b_1 t - b_1 b_2 }{ t- b_1 b_2} \right) | t \in k \right\} &\mbox{ if } a \not = 0, \\ 0 & \mbox{ if } a = 0.\sluttuborg$$ Then,
$$\partial C_2 ^{ a, (b_1, b_2)} = \left( \frac{1}{a} , b_1 \right) + \left( \frac{1}{a}, b_2 \right) - \left( \frac{1}{a} , b_1 b_2 \right),$$ where the symbol $\left( \frac{1}{a} , b \right)$ is interpreted as $0$ if $a = 0$.
\end{lemma}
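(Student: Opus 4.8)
The plan is to verify the claimed boundary formula by an explicit parametrized computation on the curve $C_2^{a,(b_1,b_2)}$, treating it as a rational curve in $\mathbb{A}^1 \times \square^2$ and computing the intersection with each codimension-one face $F_i^j$. The cycle is the closure of the image of a rational map $\mathbb{A}^1 \dashrightarrow \mathbb{A}^1 \times (\mathbb{P}^1)^2$ sending $t \mapsto (1/a, t, (b_1 t - b_1 b_2)/(t - b_1 b_2))$; I first want to record the two coordinate functions $t_1 = t$ and $t_2 = (b_1 t - b_1 b_2)/(t - b_1 b_2)$ as elements of the function field $k(t)$, note that the $\mathbb{A}^1$-coordinate $x = 1/a$ is constant (so the modulus condition in Lemma~\ref{cycle C_2} forces $a \neq 0$ for the cycle to be admissible, matching the stated convention), and then read off the boundary $\partial = -(\partial_1^0 - \partial_1^\infty) + (\partial_2^0 - \partial_2^\infty)$.

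The key steps, in order: (1) Compute $\partial_1^0$ and $\partial_1^\infty$, i.e. substitute $t_1 = 0$ and $t_1 = \infty$ into the parametrization. At $t = 0$ one gets $t_2 = (0 - b_1 b_2)/(0 - b_1 b_2) = 1$, which lies on the removed hyperplane $\{1\}$, so this face contributes nothing; at $t = \infty$ one gets $t_2 = b_1$, giving a point contribution. (2) Compute $\partial_2^0$ and $\partial_2^\infty$: setting $t_2 = 0$ gives $b_1 t - b_1 b_2 = 0$, i.e. $t = b_2$, hence the point $(1/a, b_2)$; setting $t_2 = \infty$ gives $t = b_1 b_2$, hence the point $(1/a, b_1 b_2)$. (3) Assemble with the correct signs: since $\partial := \sum_{i=1}^n (-1)^i(\partial_i^0 - \partial_i^\infty)$, the $i=1$ term contributes $-(\partial_1^0 - \partial_1^\infty) = -(0 - (1/a,b_1)) = (1/a,b_1)$, and the $i=2$ term contributes $+(\partial_2^0 - \partial_2^\infty) = (1/a,b_2) - (1/a,b_1b_2)$; summing yields exactly $(1/a,b_1) + (1/a,b_2) - (1/a,b_1b_2)$. (4) Check the orientation/multiplicity bookkeeping: each face intersection is transverse and of multiplicity one because the relevant coordinate function is a degree-one rational function of the parameter $t$, so no extra coefficients appear, and one should also note the degenerate-cycle subtraction does not interfere since none of these boundary points is a pullback from $\Diamond_1$. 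Finally, when $a = 0$ the cycle is $0$ by definition and all four boundary points $(1/a, \cdot)$ are interpreted as $0$, so the identity holds trivially.

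The main obstacle, such as it is, is not conceptual but careful: getting the signs in $\partial = \sum (-1)^i(\partial_i^0 - \partial_i^\infty)$ to come out consistent with the stated right-hand side, and making sure the face at $t=0$ genuinely drops out rather than contributing a point on $\{t_2 = 1\}$ that should have been excluded from $\square^2$ in the first place. One should also double-check that $C_2^{a,(b_1,b_2)}$ is admissible as a cycle in $\mathcal{Z}_1(\Diamond_2(k[x]/(x^2)))$: the proper-intersection-with-faces condition is immediate since the curve meets each face in a single reduced point, and the modulus condition at the fibre over $x = 0$ is vacuous because $x = 1/a$ never vanishes on the curve (it is constant and nonzero), so $\nu^* V_0 = \emptyset$. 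This last remark is presumably why the statement restricts to $a \neq 0$ for the nonzero case and sets the cycle to $0$ otherwise. With these checks in place the proof is a direct substitution, so I would present it compactly as such.
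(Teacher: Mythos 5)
Your computation is correct. Note that the paper itself gives no proof of this lemma — it is simply recalled as Lemma~2.5 of \cite{P2} (cf.\ Proposition~6.3 of \cite{BE2}, a Totaro-type cycle), so there is no in-text argument to compare against. Your direct verification — parametrize the curve, substitute $t_1 = 0,\infty$ and $t_2 = 0,\infty$ to find the face intersections, observe that $t_1 = 0$ lands on the excluded hyperplane $\{t_2 = 1\}$ and so contributes nothing, and then assemble with the signs $\partial = -(\partial_1^0 - \partial_1^\infty) + (\partial_2^0 - \partial_2^\infty)$ — is exactly the expected calculation and yields the stated right-hand side. Your side remarks are also correct and worth keeping: the modulus condition is vacuous because $x = 1/a$ is a nonzero constant so $\nu^*V_0 = \emptyset$ (and this is precisely why $a \neq 0$ is needed for the nonzero case), each face meets the curve transversally in a single reduced point since the coordinate functions are degree-one in $t$, and none of the boundary points is degenerate.
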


The cycle $C_2 ^{a, (b_1, b_2)}$ is in fact in $\calZ_1 (\Diamond_2 (k[x]/(x^m)))$ for all $m \geq 2$.

\begin{definition}Let $ A = k[x]/ (x^m)$ and let  $p, q, r_1, r_2 \geq 0$ be integers. Let $n = r_1 + r_2$ and $d = p+q$. For two admissible irreducible closed subvarieties $Z_1 \in \calZ_p (\Diamond_{r_1}(A_1))$ and $Z_2 \in \calZ _q (\Diamond_{r_2} (A_2))$, motivated by the above Lemma, we define the \emph{extra-degenerate concatenation} $Z_1 \times ' Z_2$ as follows:  consider a locally closed space $\mathcal{M} \subset \mathbb{G}_m \times \mathbb{G}_m \times \mathbb{A}^1 \times \square^2 $ defined by the collection of curves parametrized by $(x, y) \in \mathbb{G}_m \times \mathbb{G}_m$ $$\mathcal{M} = \left\{ \left( x, y \right) \times \left( xy, t, \frac{yt-1}{xyt -1}\right) | x, y \in k^{\times}, t \in k \right\}$$ where we have two natural projections
$$ \xymatrix{ & \mathcal{M} \ar[dl] _{\pi_{\alpha}} \ar[dr]^{\pi _{\beta}} & \\ \mathbb{G}_m \times \mathbb{G}_m & & \mathbb{A}^1 \times \square^2 }.$$ We have a cross-section of $\pi_{\alpha}$:
$$ C: \mathbb{G}_m \times \mathbb{G}_m \cdots \to \mathcal{M}$$
$$ (x,y) \mapsto \mbox{ the curve } \left\{ \left( xy, t, \frac{yt -1}{xyt -1} \right) | t \in k \right\}.$$

It induces $C': \mathbb{G}_m \times \mathbb{G}_m \times \square ^n \overset{C \times ({\rm Id}_{\square^n})}{\longrightarrow} \mathcal{M} \times \square^{n} \overset{\pi_{\beta} \times {\rm Id}_{\square^n}  }{\longrightarrow} \mathbb{A}^1 \times \square^{n+2}$ Define $Z_1 \times' Z_2:=$ the Zariski closure of $ C'(Z_1 \times Z_2) $ in $\mathbb{A}^1\times \square^{n+2}$. In general, we have $\dim (Z_1 \times ' Z_2) = \dim Z_1 + \dim Z_2 + 1$.
\end{definition}

\begin{lemma}For the above $Z_1$ and $Z_2$, the extra-degenerate concatenation $Z_1 \times ' Z_2$ is admissible in $\Diamond_{n+2} (A_1 \otimes_k A_2)$
\end{lemma}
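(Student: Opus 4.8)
The plan is to verify the two admissibility conditions for $Z_1 \times' Z_2$ separately, reducing everything to the corresponding conditions already known for $Z_1$, $Z_2$, and to the structure of the cycle $C_2^{a,(b_1,b_2)}$ of Lemma \ref{cycle C_2}. Since the modulus and face conditions are local and checked on a normalization, I would first set up the relevant normalization diagram. Let $\widehat{Z_1 \times' Z_2}$ be the Zariski closure of $C'(Z_1 \times Z_2)$ in $\mathbb{A}^1 \times (\mathbb{P}^1)^{n+2}$ and let $\nu : \overline{Z_1 \times' Z_2} \to \widehat{Z_1 \times' Z_2}$ be its normalization. Using $\overline{Z_1} \times \overline{Z_2}$ (the product of the normalizations, which is normal over the perfect field $k$ by Lemma 3.1 of \cite{KL} as used earlier) together with the cross-section $C'$, one gets a dominant map from an open subset of $\overline{Z_1} \times \overline{Z_2} \times (\text{curve direction})$ onto $\overline{Z_1 \times' Z_2}$, and by universality of normalization a comparison map $\phi : \overline{Z_1 \times' Z_2} \to \overline{Z_1} \times \overline{Z_2}$ fitting in a commutative square over the map $\mathbb{A}^1 \times \square^{n+2} \to \Diamond_{r_1}(A_1) \times \Diamond_{r_2}(A_2)$ induced by $C'$ (projecting away the extra parameter). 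This is the analogue of the diagrams in the proof of Lemma \ref{domain of product} and in the functoriality lemmas of \S 2.

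For the modulus condition, fix presentations $A_1 = k[\mathbb{X}_1]/J_1$, $A_2 = k[\mathbb{X}_2]/J_2$, so $A_1 \otimes_k A_2 = k[\mathbb{X}_1 \cup \mathbb{X}_2]/\langle J_1, J_2\rangle$, and let $\mathfrak{p} \in \supp(\nu^* V(\langle \mathfrak{m}_1, \mathfrak{m}_2\rangle))$ be a closed point of $\overline{Z_1 \times' Z_2}$. Its image $\phi(\mathfrak{p})$ lies in $\supp(\nu_1^* V(\mathfrak{m}_1)) \times \overline{Z_2}$ or in $\overline{Z_1} \times \supp(\nu_2^* V(\mathfrak{m}_2))$, since the variable $w$ of $\mathbb{A}^1$ maps to $xy$ and $\{w=0\}$ pulls back into $\{x=0\} \cup \{y=0\}$. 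By the modulus condition for $Z_1$ (resp. $Z_2$), there is an index $j_1 \in \{1, \dots, r_1\}$ with $1 - t_{j_1} \in (J_1)\cdot \mathcal{O}_{\overline{Z_1}, \pi_1\phi(\mathfrak{p})}$ (resp. $j_2$ with $1 - t_{j_2} \in (J_2)\cdot \mathcal{O}_{\overline{Z_2}, \pi_2\phi(\mathfrak{p})}$). These $t$-coordinates among the last $n$ of the $n+2$ box coordinates are carried unchanged by $C'$, so the natural local ring maps $(J_i)\cdot\mathcal{O}_{\overline{Z_i}, \pi_i\phi(\mathfrak{p})} \to \langle J_1, J_2\rangle \cdot \mathcal{O}_{\overline{Z_1 \times' Z_2}, \mathfrak{p}}$ (factoring through $\phi^*$) give $1 - t_{j_i} \in \langle J_1, J_2\rangle\cdot\mathcal{O}_{\overline{Z_1 \times' Z_2}, \mathfrak{p}}$, which is exactly the modulus condition for $Z_1 \times' Z_2$. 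Note the two newly introduced box coordinates $t$ and $\frac{yt-1}{xyt-1}$ are \emph{not} used here; they are only needed for the boundary computation of Lemma \ref{cycle C_2}, not for modulus.

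For proper intersection with faces, I would argue as in the earlier admissibility lemmas: the faces of $\mathbb{A}^1 \times \square^{n+2}$ are obtained by setting some $t_i \in \{0, \infty\}$, and pulling such a face back along $C'$ either lands inside a face of $\Diamond_{r_1}(A_1) \times \Diamond_{r_2}(A_2)$ (for the last $n$ coordinates) — where proper intersection follows from admissibility of $Z_1 \times Z_2$, already established in the concatenation lemma — or it imposes one linear/rational condition in the two new coordinates on the curve family, which cuts the dimension by one as it should; in the degenerate-face direction one uses the modulus condition just proved. So proper intersection reduces to the known proper-intersection properties of $Z_1 \times Z_2$ together with an elementary dimension count on the explicit curve $\{(xy, t, \frac{yt-1}{xyt-1})\}$. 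I expect the main obstacle to be bookkeeping: carefully tracking which of the $n+2$ box coordinates come from $Z_1$, from $Z_2$, and from the Totaro-type curve, and checking that no face degenerates the whole of $Z_1 \times' Z_2$ — in other words, that $C'$ is generically injective on $Z_1 \times Z_2$ so that $\dim(Z_1 \times' Z_2) = \dim Z_1 + \dim Z_2 + 1$ as asserted, which is where one genuinely uses the shape of the curve rather than just formal functoriality.
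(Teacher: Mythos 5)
The paper's own proof of this lemma is a single line ("The proper intersection condition is obvious. The modulus condition follows from the conditions for $Z_1$ and $Z_2$."), and your proposal follows exactly the same strategy — reduce the modulus condition to those of $Z_1, Z_2$ using the fact that the box coordinates $t_1, \dots, t_n$ are carried unchanged by $C'$, and reduce proper intersection to the admissibility of $Z_1 \times Z_2$ plus a dimension count on the Totaro-type curve. So in terms of structure, you and the paper are doing the same thing, and your write-up usefully supplies details the paper omits.

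One caveat, though, which is partly inherited from an inconsistency in the paper's own statement. The definition of $Z_1 \times' Z_2$ places it in $\mathbb{A}^1 \times \square^{n+2}$, i.e. in $\Diamond_{n+2}$ of a ring of embedding dimension one (the single affine coordinate is $w = xy$); compare also Definition \ref{definition cyclic shuffle product}, where $Z_1 \wedge' Z_2 \in \mathcal{Z}_{d+1}(\Diamond_{n+2}(A))$. The "$\Diamond_{n+2}(A_1 \otimes_k A_2)$" in the lemma statement is therefore a typo, and your modulus argument — which concludes "$1-t_{j_i} \in \langle J_1, J_2\rangle\cdot\mathcal{O}_{\overline{Z_1 \times' Z_2},\mathfrak{p}}$" — is aimed at the wrong ideal: $\langle J_1, J_2\rangle$ is an ideal in two variables $x, y$, which are not coordinate functions on $\mathbb{A}^1 \times \square^{n+2}$. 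The correct conclusion is $1 - t_{j_i} \in (w^m)\cdot\mathcal{O}_{\overline{Z_1 \times' Z_2},\mathfrak{p}}$. To pass from $1-t_{j_1} \in (x^{m_1})\mathcal{O}_{\overline{Z_1}}$ to this, you need the same kind of argument as in Lemma \ref{domain of product} (replacing $x^{m_1}$ by $w^m = (xy)^m$ using that $y$ is a unit, and symmetrically for the other factor), and, as there, the case where the limit point sees \emph{both} $x$ and $y$ go to zero is genuinely delicate and is not addressed either by the paper's one-liner or by your proposal. I would not call this a fatal gap in your argument specifically, since you are reproducing the level of rigor of the source, but it is worth being explicit that $\langle J_1, J_2\rangle\cdot\mathcal{O}_{\overline{Z_1\times' Z_2}}$ does not literally make sense and that the step from $(x^{m_1})$ to $(w^m)$ is where the real content (and the potential subtlety) lies.
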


\begin{proof}The proper intersection condition is obvious. The modulus condition follows from the conditions for $Z_1$ and $Z_2$. 
\end{proof}

\begin{remark}
On $k$-rational points $(x, t_1, \cdots, t_{r_1} ) \in Z_1$ and $(y, t_{r_1+1}, \cdots, t_{n}) \in Z_2$, we have
$$(x, t_1, \cdots, t_{r_1} ) \times ' (y, t_{r_1 +1}, \cdots, t_{n})= C_2 ^{\frac{1}{xy} , \left( \frac{1}{x}, \frac{1}{y} \right)} \times (t_1, \cdots, t_{r_1}, t_{r_1 + 1}, \cdots, t_{n} ).$$ This is a $1$-cycle in $\mathcal{Z}_1 (\Diamond_{n + 2} (A))$. 
\end{remark}

\begin{definition} \label{definition cyclic shuffle product}Under the above assumptions, define the \emph{cyclic shuffle product} $\wedge'$ by
$$Z_1 \wedge' Z_2 := \sum_{\nu \in \perm _{(1, r_1, r_2)}} (\sgn(\nu)) \nu \cdot \left( Z_1 \times' Z_2 \right) \in \mathcal{Z}_{d+1} (\Diamond_{n + 2} (A)),$$ where $\nu \in \perm_{(1, r_1, r_2)}$ acts on the set $\{ (1,2), 3, 4, \cdots, n+2 \}$ of $(n+1)$-objects treating $(1,2)$ as a single block. We extend it $\mathbb{Z}$-bilinearly.
\end{definition}

\begin{proposition}\label{Connes is a derivation}Let $A= k[x]/(x^m)$, and let $ p, q, r_1, r_2 \geq 0$ be integers. Let $d = p + q$ and $n = r_1 + r_2$. Let $\xi \in \calZ_p (\Diamond_{r_1} (A))_{0}$ and $\eta \in \calZ_q (\Diamond_{r_2} (A))_{0}$ such that $\partial' (\xi) =0$ and $\partial' (\eta ) = 0$. Then,
\begin{eqnarray*} \delta_*(\xi \wedge \eta ) - (\delta _* \xi ) \wedge \eta - (-1)^{r_1} \xi \wedge (\delta _* \eta) 
 =  - \partial (\xi \wedge' \eta) \ \ \mbox{ in } \calZ_d (\Diamond_{n+1} (A)).\end{eqnarray*}

\end{proposition}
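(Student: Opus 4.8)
The plan is to reduce the identity to a purely combinatorial statement about triple shuffles, paralleling the classical proof that the Connes operator $B$ is a derivation for the shuffle product on Hochschild complexes (Chapter 4 of \cite{L}), but keeping track of the geometry of the cycle $C_2^{a,(b_1,b_2)}$ from Lemma \ref{cycle C_2}. First I would observe that $\delta_*$ on the additive Chow groups is represented at the cycle level by $\delta = \sum_{k} (-1)^k \delta_k$, where $\delta_k$ inserts the extra coordinate $1/x$ in the $k$-th slot; and that the wedge product $\wedge = \mu_* \circ sh_*$ is, on representatives, the concatenation $Z_1 \times Z_2$ shuffled by $\perm_{(r_1,r_2)}$ and then pushed forward by $\mu$, which on points sends $(x,y)$ to $xy$. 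So each of the four terms $\delta_*(\xi \wedge \eta)$, $(\delta_*\xi)\wedge\eta$, $\xi\wedge(\delta_*\eta)$, and $\partial(\xi\wedge'\eta)$ unwinds into a signed sum over permutations of cycles built from $Z_1\times Z_2$ with one inserted ``$1/(xy)$-type'' coordinate (or, for the $\wedge'$ term, from the $1$-cycle $C_2^{1/(xy),(1/x,1/y)}$ which has boundary exactly $(1/(xy),1/x)+(1/(xy),1/y)-(1/(xy),1/(xy))$ by Lemma \ref{cycle C_2}).

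The key steps, in order, would be: (1) Expand $\partial(\xi\wedge'\eta)$ using that $\partial$ is a graded derivation for the concatenation (Lemma \ref{partial is a derivation for concatenation}) and for the shuffle product (Proposition \ref{partial derivation for shuffle}), together with the hypotheses $\partial'\xi=0$, $\partial'\eta=0$. In the reduced complex only $\partial' = \partial_{\mathrm{last}}^0$ survives, so $\partial(\xi\times'\eta)$ splits into the piece where $\partial$ hits the interior $C_2$-cycle (producing, via Lemma \ref{cycle C_2}, the three ``$d\log$'' terms that will match $\delta_k$'s) and the pieces where $\partial$ hits $Z_1$ or $Z_2$ (which vanish or reorganize using $\partial'\xi=\partial'\eta=0$ after the triple-shuffle symmetrization). (2) Match the term $(1/(xy),1/(xy))$-insertion with $\delta_*(\xi\wedge\eta)$: here I would use that $\mu_*$ intertwines the coordinate $1/x$ on the target with $1/(xy)$ after multiplying, i.e. $\delta_k$ applied after $\mu_*\circ sh_*$ gives the same cycle as inserting $1/(xy)$ into the shuffled concatenation. (3) Match the $(1/(xy),1/x)$- and $(1/(xy),1/y)$-insertions with $(\delta_*\xi)\wedge\eta$ and $(-1)^{r_1}\xi\wedge(\delta_*\eta)$ respectively — this is where the decomposition of triple shuffles proved in Lemmas \ref{multiple shuffle 1}, \ref{multiple shuffle 2}, \ref{multiple shuffle 3} enters: those three lemmas express $\sum_{\nu\in\perm_{(1,r_1,r_2)}}\sgn(\nu)\nu$ in the three ways needed to identify, respectively, the $\delta_*(\xi\wedge\eta)$ contribution, the $(\delta_*\xi)\wedge\eta$ contribution, and the $\xi\wedge(\delta_*\eta)$ contribution, each as a reindexed sum over the appropriate double shuffle set. (4) Assemble signs and conclude.

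I expect the main obstacle to be Step (3): correctly bookkeeping where the ``new'' coordinate slot goes under the triple-shuffle action on the block $\{(1,2),3,\dots,n+2\}$ and verifying that the three combinatorial identities of Lemmas \ref{multiple shuffle 1}--\ref{multiple shuffle 3} line up precisely with the three geometric boundary terms coming out of $\partial C_2^{1/(xy),(1/x,1/y)}$, including the sign $(-1)^{r_1}$ that distinguishes the $\xi\wedge(\delta_*\eta)$ term (this is exactly the $(-1)^r$ appearing in Lemma \ref{multiple shuffle 3}). A secondary subtlety is that $\mu$ is only an open immersion, not closed, so one must check throughout that taking Zariski closures commutes appropriately with the shuffle action and with $\partial$ on the relevant cycles; this is handled cycle-by-cycle as in Lemma \ref{domain of product}, using that all the cycles in sight are $(A_1,A_2)$-decomposable (or sums of such), so $\mu_*$ behaves like an honest pushforward on them. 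Once these identifications are in place, the proposition follows by collecting the four groups of terms and observing that everything except the asserted four-term combination cancels.
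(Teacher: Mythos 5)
Your proposal is essentially correct and invokes exactly the ingredients the paper uses: the representation of $\delta_*$ as a signed sum over $\perm_{(1,n)}$, the three decomposition lemmas \ref{multiple shuffle 1}--\ref{multiple shuffle 3} for $\perm_{(1,r_1,r_2)}$, and Lemma \ref{cycle C_2} identifying $\partial C_2^{1/(xy),(1/x,1/y)}$ with $(xy,1/x)+(xy,1/y)-(xy,1/(xy))$. The one genuine difference is the direction of travel, and it matters. You propose to start from the right-hand side and expand $\partial(\xi\wedge'\eta)$ by citing Lemma \ref{partial is a derivation for concatenation} and Proposition \ref{partial derivation for shuffle}; but those Leibniz identities are proved only for $\times$ and $\times_{sh}$, and do not apply verbatim to $\wedge'$, because the extra-degenerate concatenation $\times'$ is not a juxtaposition of coordinate boxes but an insertion of a $1$-dimensional curve $C_2$ fibered over the $\mathbb{A}^1$-coordinates of $\xi$ and $\eta$. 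So your Step (1) would require a fresh Leibniz-type computation for $\wedge'$, keeping track of which of the faces $\partial_i^j$ land on the two $C_2$-slots (producing the three terms of Lemma \ref{cycle C_2}) and which land on $\xi$- or $\eta$-slots (vanishing by the hypotheses $\partial'\xi=\partial'\eta=0$ together with the structure of the reduced complex) — this is exactly the bookkeeping you already flag as the main obstacle. The paper sidesteps all of it by computing in the opposite direction: it first reduces to $k$-rational points (which makes the $\partial'$-hypotheses automatic and kills the $\xi$- and $\eta$-faces for dimension reasons), then uses Lemmas \ref{multiple shuffle 1}, \ref{multiple shuffle 2}, \ref{multiple shuffle 3} to write each left-hand term as $-\bigl(\sum_{\nu\in\perm_{(1,r_1,r_2)}}\sgn(\nu)\,\nu\bigr)$ applied to the insertion of $1/(xy)$, $1/x$, $1/y$ respectively, and finally reduces to $r_1=r_2=n=0$, where the whole identity collapses to $\partial C_2 = (xy,1/x)+(xy,1/y)-(xy,1/(xy))$. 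Your route is salvageable but longer; I would recommend flipping the computation to the left-hand side as the paper does, which also makes the role of the reduction to rational points — which you should state explicitly as a lemma or observation rather than leave implicit — much more visible.
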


\begin{remark} Before we prove the Proposition, observe that the map $\delta_*$ can be written as a sum over the set $\perm _{(1, n)}$ of double shuffle permutations. Indeed, for a $k$-rational point $(x, t_1, \cdots, t_n)$, we have
\begin{eqnarray*}
\delta_* (x, t_1, \cdots, t_n) &=& \sum_{i=1} ^{n+1} (-1)^i ( x, t_1, \cdots, t_{i-1} , \underset{i^{\rm th}}{\underbrace{\frac{1}{x}}}, t_i, \cdots, t_n) \\
&=& - \sum_{\tau \in \perm _{(1, n)}} (\sgn (\tau)) \tau \cdot \left( x, \frac{1}{x}, t_1, \cdots, t_n\right).
\end{eqnarray*}
But, it is not an element of $\mathbb{Z}[\perm_{n+1}]$.
\end{remark}

\begin{proof}[Proof of Proposition \ref{Connes is a derivation}]It is enough to check the identity for $k$-rational points. Let $\xi = (x, t_1, \cdots, t_{r_1}), \eta = (y, t_{r_1 +1}, \cdots, t_n)$. Thus, 
$$\tuborg  \xi \times \eta = (x, y, t_1, \cdots, t_n ), \\
\xi \times' \eta = \left\{ \left( xy, t, \frac{yt-1}{xyt-1} \right) t \in k \right\} \times (t_1, \cdots, t_n).\sluttuborg$$ Let's compute each term.

\begin{eqnarray*}
& & \delta_* (\xi \wedge \eta) \\
&=& \delta_* \mu_* (\xi \times_{sh} \eta) = \delta_*\left( \sum_{\sigma \in \perm_{(r_1, r_2)}} (\sgn (\sigma)) \sigma \cdot \mu_* \left( \xi \times \eta \right) \right) \\
&=& \delta_* \left( \sum_{\sigma \in \perm_{(r_1, r_2)}} (\sgn (\sigma)) \sigma \cdot (xy, t_1, \cdots, t_n) \right) \\
&=& \sum_{\sigma \in \perm_{(r_1, r_2)}} (\sgn (\sigma)) \delta_* (xy, t_{\sigma^{-1} (1)}, \cdots, t_{\sigma^{-1} (n))})\\
&=& - \sum_{\sigma \in \perm _{(r_1, r_2)}} (\sgn(\sigma)) \sum_{\tau \in \perm _{(1, n)}} (\sgn (\tau)) \tau \cdot (xy, \frac{1}{xy} , t_{\sigma ^{-1} (1)}, \cdots, t_{\sigma ^{-1} (n)} )\\
&=& - \sum_{\sigma \in \perm_{(r_1, r_2)}} (\sgn (\sigma)) \sum_{\tau \in \perm_{(1, n)}} (\sgn (\tau)) (\sigma_{\tau} \cdot \tau) \cdot (xy, \frac{1}{xy}, t_1, \cdots, t_n) \\
&=& - \left( \sum_{\nu \in \perm_{(1, r_1, r_2)}} (\sgn (\nu)) \nu\right)  \cdot (xy, \frac{1}{xy}, t_1, \cdots, t_n),
\end{eqnarray*}where the last equality follows from Lemma \ref{multiple shuffle 1}.

\begin{eqnarray*}
& &(\delta_* \xi) \wedge \eta \\
&=& \mu_* \left( \sum_{\sigma \in \perm_{(r_1 +1, r_2)}} (\sgn (\sigma)) \sigma \cdot \left( (\delta_* \xi) \times \eta \right)\right) \\
&=& -\mu_*  \sum_{\sigma \in \perm_{(r_1 +1, r_2)}} (\sgn (\sigma)) \sigma \cdot   \\ & &  \left( \sum_{\tau \in \perm_{(1, r_1)} }(\sgn (\tau)) \tau \cdot (x, \frac{1}{x}, t_1, \cdots, t_{r_1} )  \right) \times (y, t_{r_1 +1}, \cdots, t_{n} ) \\
&=& -\left( \sum_{\sigma \in \perm_{(r_1 + 1, r_2)}} (\sgn (\sigma)) \sigma \right) \cdot \\ & & \left( \sum_{\tau \in \perm_{(1, r_1)}} (\sgn (\tau)) (\tau \times {\rm Id}_{r_2})  \right) \cdot (xy, \frac{1}{x}, t_1, \cdots, t_n) \\
&=& - \left( \sum_{\nu \in \perm _{(1, r_1, r_2)}} (\sgn (\nu)) \nu \right) \cdot (xy, \frac{1}{x}, t_1, \cdots, t_n),
\end{eqnarray*}where the last equality follows from Lemma \ref{multiple shuffle 2}.

Similarly, 
\begin{eqnarray*}
& & (-1)^{r_1} \xi \wedge (\delta_* \eta) \\
&=& - \left( \sum_{\sigma \in \perm_{(r_1, r_2 +1)}} (\sgn (\sigma)) \sigma \right) \cdot \\ && \left( \sum_{\tau \in \perm_{(1, r_2)}} (\sgn (\tau)) ({\rm Id}_{r_1} \times \tau) \right) \cdot (xy, \frac{1}{y}, t_1, \cdots, t_n )\\
&=& - \left( \sum _{\nu \in \perm_{(1, r_1, r_2)}} (\sgn (\nu)) \nu \right) \cdot (xy, \frac{1}{y}, t_1, \cdots, t_n),
\end{eqnarray*}where the last equality follows from Lemma \ref{multiple shuffle 3}. Thus,

\begin{eqnarray*} & & \delta_*(\xi \wedge \eta) - (\delta_* \xi) \wedge \eta - (-1)^{r_1} \xi \wedge (\delta_* \eta)\\
&=& - \left( \sum_{\nu \in \perm _{(1, r_1, r_2)}} (\sgn (\nu))\nu \right) \cdot  \left( (xy, \frac{1}{x}) + (xy, \frac{1}{y}) - (xy, \frac{1}{xy})\right) \times (t_1, \cdots, t_n).
\end{eqnarray*}

On the other hand, 
\begin{eqnarray*}
& & - \partial (\xi \wedge' \eta) \\
&=& - \partial \left( \sum_{\nu \in \perm_{(1, r_1, r_2)}} (\sgn (\nu)) \nu \right) \cdot C_2 ^{\frac{1}{xy} , \left( \frac{1}{x}, \frac{1}{y} \right)} \times (t_1, \cdots, t_n).
\end{eqnarray*} Since $\partial_i ^j \left( C_2 ^{\frac{1}{xy}, \left( \frac{1}{x}, \frac{1}{y} \right)} \times (t_1, \cdots, t_n) \right) = 0$ for $i \geq 3$ and $j=0, \infty$, to prove the equality of the Proposition, we may assume that $r_1 = r_2 = n=0$, in which case the set $\perm_{(1, r_1, r_2)}$ is a singleton. Thus, it remains to check that
$$\left( xy, \frac{1}{x}\right) + \left( xy, \frac{1}{y}\right) - \left( xy, \frac{1}{xy} \right) = \partial C_2 ^{\frac{1}{xy}, \left( \frac{1}{x}, \frac{1}{y} \right)}.$$ But, this is indeed true by Lemma \ref{cycle C_2}. This finishes the proof.\end{proof}
\begin{corollary}The Connes boundary map $\delta$ induces $\delta_*$ which is a derivation for $\wedge$ in $ACH_* (k, *; A)$. 
\end{corollary}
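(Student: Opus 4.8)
The plan is to deduce the corollary directly from Proposition \ref{Connes is a derivation} by reading that identity at the level of the additive Chow groups rather than at the level of cycles. First I would take cycles $\xi \in \calZ_p(\Diamond_{r_1}(A))_0$ and $\eta \in \calZ_q(\Diamond_{r_2}(A))_0$ that are closed for $\partial'$, i.e. $\partial'\xi = 0$ and $\partial'\eta = 0$, so that they represent classes $[\xi] \in ACH_p(k, r_1; A)$ and $[\eta] \in ACH_q(k, r_2; A)$. The point is that by Corollary following Theorem \ref{commutative}, $\delta$ descends to $\delta_*$ on $ACH$ and commutes with $\partial'$; moreover by the proposition following Lemma \ref{domain of product} together with Proposition \ref{partial derivation for shuffle}, the wedge product $\wedge = \mu_* \circ sh_*$ is well-defined on the $ACH$ groups and $\partial$ (equivalently $\partial'$ on the reduced complex, after the homotopy equivalence of Lemma \ref{homotopy}) is a graded derivation for the intermediate products used to build $\wedge$.

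The key step is then simply to observe that Proposition \ref{Connes is a derivation} gives
$$\delta_*(\xi \wedge \eta) - (\delta_*\xi)\wedge\eta - (-1)^{r_1}\xi\wedge(\delta_*\eta) = -\partial(\xi \wedge' \eta),$$
where the right-hand side is a $\partial$-boundary (of the cyclic shuffle product cycle $\xi \wedge' \eta$, which is admissible by the lemma following Definition \ref{definition cyclic shuffle product}), hence vanishes in homology. Passing to classes in $ACH_d(k, n+1; A)$, where $d = p+q$ and $n = r_1 + r_2$, we obtain
$$\delta_*([\xi]\wedge[\eta]) = (\delta_*[\xi])\wedge[\eta] + (-1)^{r_1}[\xi]\wedge(\delta_*[\eta]),$$
which is exactly the graded Leibniz rule for $\delta_*$ with respect to $\wedge$. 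Since every class in $ACH_*(k,*;A)$ is represented by such a $\partial'$-closed cycle, and since $\wedge$ and $\delta_*$ are already known to be well-defined on $ACH$, this identity holds for all classes, proving that $\delta_*$ is a graded derivation for $\wedge$.

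The only genuine point requiring care — and the part I would write out carefully rather than wave at — is a bookkeeping matter: Proposition \ref{Connes is a derivation} is stated for representative cycles in the reduced complex, and one must check that the class $[\xi \wedge' \eta]$ used on the right is a boundary in the \emph{same} complex computing $ACH_d(k, n+1; A)$ (or, via the homotopy equivalence of Lemma \ref{homotopy}, that its image is a boundary there), so that it really dies in homology. Once the compatibility of the reduced and unreduced complexes under $\partial$ versus $\partial'$ is invoked (Lemma \ref{homotopy}), there is no obstacle; the rest is a formal consequence and the corollary follows.
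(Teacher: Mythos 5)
Your proposal is correct and takes essentially the same route as the paper, which states this corollary with no separate proof because it is regarded as an immediate consequence of Proposition \ref{Connes is a derivation}: the discrepancy on the left is exhibited as the $\partial$-boundary $-\partial(\xi\wedge'\eta)$ of an admissible cycle, hence vanishes in $ACH$. The one caveat you flag is real but straightforward: since $\xi,\eta$ lie in the reduced complex with $\partial'\xi=\partial'\eta=0$, \emph{all} their face maps vanish, so each of the three terms on the left is $\partial$-closed and the identity passes directly to homology of the unreduced complex (which is $ACH$ by definition, Lemma \ref{homotopy} giving the agreement with the reduced complex).
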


Thus, we proved that
\begin{theorem}The triple $(ACH_* (k, *; A), \wedge, \delta_*)$ is a CDGA, where $\wedge$ and $\delta_*$ are induced by algebraic cycles. In particular, on $0$-cycles, the wedge product and the exterior derivation for $\Omega_{k/\bbZ}^*$ are motivic. In other words, the CDGA $\left( \Omega_{k/\mathbb{Z}} ^*, \wedge, d \right)$ is motivic.
\end{theorem}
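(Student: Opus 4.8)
The first assertion is an assembly of results already in hand (take $A=k[x]/(x^m)$, $m\geq 2$, throughout). By the Proposition of \S 5 the map $\wedge=\mu_*\circ sh_*$ makes $ACH_*(k,*;A)$ a commutative graded algebra, with grading by the cube dimension, graded-commutativity $x\wedge y=(-1)^{r_1r_2}y\wedge x$ (Remark \ref{commutative graded}), and associativity part of that Proposition. We have $\delta_*^2=0$ from \S 3, so $\delta_*$ is a degree $+1$ differential. By the Corollary to Proposition \ref{Connes is a derivation}, passing to homology in Proposition \ref{Connes is a derivation} makes the correction term $-\partial(\xi\wedge'\eta)$ vanish and leaves $\delta_*(\xi\wedge\eta)=(\delta_*\xi)\wedge\eta+(-1)^{r_1}\xi\wedge(\delta_*\eta)$, i.e.\ $\delta_*$ is a graded derivation for $\wedge$. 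These three facts are precisely the CDGA axioms; and since $\wedge$ is built from the concatenation and shuffle of cycles together with the multiplication $\mu\colon\mathbb{G}_m\times\mathbb{G}_m\to\mathbb{G}_m$, while $\delta_*$ comes from the rational maps $\delta_k$ and the cyclic--shuffle cycle of Definition \ref{definition cyclic shuffle product}, the whole structure is motivic.

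For the second assertion I would specialize to $A=k[x]/(x^2)$ and $p=0$ and start from the Bloch--Esnault isomorphism $\phi_n\colon ACH_0(k,n)\overset{\sim}{\to}\Omega_{k/\bbZ}^n$ (Theorem 6.4 of \cite{BE2}), under which the class of the point $(1/a,b_1,\dots,b_n)$ is $a\,d\log b_1\wedge\cdots\wedge d\log b_n$ (Lemma \ref{differential}, Remark \ref{motivation 2}) and a permutation of the $\square$-coordinates acts by its sign. Two compatibilities must then be checked. First, $\phi$ carries $\delta_*$ to $(n+1)d$: this is the Corollary in \S 3 that identifies $\delta_*$ on $0$-cycles with $(n+1)d$, whose proof rests on Remarks \ref{motivation 1} and \ref{motivation 2} --- the rational map $\delta_k$ inserts $1/x=a$, which after moving it to the front and antisymmetrizing contributes $\pm d(a\,d\log b_1\wedge\cdots)$, and the $n+1$ insertion slots add up to $(n+1)d$ (up to a global sign). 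Second, $\phi$ carries $\wedge$ to $\binom{r_1+r_2}{r_1}$ times the ordinary wedge product of K\"ahler differentials: on points, the concatenation of $(1/a,b_1,\dots,b_{r_1})$ and $(1/a',b'_1,\dots,b'_{r_2})$ followed by $\mu_*$ is the single point $(1/(aa'),b_1,\dots,b_{r_1},b'_1,\dots,b'_{r_2})\leftrightarrow aa'\,d\log b_1\wedge\cdots\wedge d\log b'_{r_2}$, and since a wedge of $1$-forms is unaltered by permuting its factors and multiplying by the sign, the shuffle sum $\sum_{\sigma\in\perm_{(r_1,r_2)}}\sgn(\sigma)\,\sigma\cdot(-)$ reproduces this same form $|\perm_{(r_1,r_2)}|=\binom{r_1+r_2}{r_1}$ times.

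It then remains to absorb the binomial and the factor $n+1$ into a renormalization of the comparison map. Put $\phi'_n:=\tfrac{1}{n!}\,\phi_n\colon ACH_0(k,n)\overset{\sim}{\to}\Omega_{k/\bbZ}^n$ (adjusting signs if needed). Then $\phi'_{r_1+r_2}(\xi\wedge\eta)=\tfrac{1}{(r_1+r_2)!}\binom{r_1+r_2}{r_1}\,\phi_{r_1}(\xi)\wedge\phi_{r_2}(\eta)=\phi'_{r_1}(\xi)\wedge\phi'_{r_2}(\eta)$ and $\phi'_{n+1}(\delta_*\xi)=\tfrac{1}{(n+1)!}(n+1)\,d\,\phi_n(\xi)=d\,\phi'_n(\xi)$, so $\phi'$ is an isomorphism of CDGAs from $(ACH_0(k,*;k[x]/(x^2)),\wedge,\delta_*)$ --- a CDGA defined entirely by algebraic cycles --- onto $(\Omega_{k/\bbZ}^*,\wedge,d)$. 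Hence the latter is motivic, answering Challenge 5.3 of \cite{BE1}.

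The step I expect to be the main obstacle is the second compatibility of the middle paragraph: pinning down how the isomorphism of Theorem 6.4 of \cite{BE2} interacts with products of $0$-cycles, with careful control of the factorial normalization and of all Koszul signs. Everything else is either quoted verbatim from earlier in the paper or is the purely formal renormalization of the last paragraph.
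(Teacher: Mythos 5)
Your proposal is correct and follows essentially the same assembly route the paper takes: quote the graded commutative algebra structure of $\wedge$ from \S 5, $\delta_*^2=0$ from \S 3, and the derivation property from Proposition \ref{Connes is a derivation} and its corollary; the paper's proof of the final theorem is precisely this assembly with no further argument.

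Where you go genuinely beyond the paper is in the middle and final paragraphs. The paper's Corollary in \S 3 establishes $\delta_* = (n+1)d$ (not $d$) on $\Omega_{k/\bbZ}^n$, and, as you correctly compute, the shuffle-then-$\mu_*$ construction produces $\binom{r_1+r_2}{r_1}$ copies of the ordinary wedge on $0$-cycles (because a coordinate permutation acts by its sign under the Bloch--Esnault isomorphism, which cancels the $\sgn(\sigma)$ in the shuffle sum). Taken literally, then, $\phi_n$ is a graded-algebra and chain isomorphism only up to these combinatorial factors, and the paper never addresses this. Your renormalization $\phi'_n := \tfrac{1}{n!}\phi_n$ (legitimate since ${\rm char}(k)=0$) is exactly what is needed to turn the chain of identifications into an honest isomorphism of CDGAs: $\tfrac{1}{(r_1+r_2)!}\binom{r_1+r_2}{r_1} = \tfrac{1}{r_1!\,r_2!}$ handles the wedge, and $\tfrac{n+1}{(n+1)!} = \tfrac{1}{n!}$ handles the differential. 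This is not merely cosmetic; without it the phrase ``the CDGA $(\Omega_{k/\bbZ}^*,\wedge,d)$ is motivic'' is only true up to a universal rescaling of the comparison map, which the paper leaves implicit. So your version of the proof is a useful sharpening rather than a different route, and the ``main obstacle'' you flag (the interaction of Theorem 6.4 of \cite{BE2} with products of $0$-cycles) is indeed the one nontrivial verification, which you carry out correctly.
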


\noindent \textbf{Acknowledgement} I would like to thank Donu Arapura, Spencer Bloch, C.-Y. Jean Chan, Alain Connes, William Heinzer,  Maxim Kontsevich, James McClure, and Bernd Ulrich for generously sharing their time during this work.

\bibliographystyle{amsplain}

\end{document}